\documentclass[letter,article,oneside,11pt]{memoir}

\usepackage[english]{babel}
\usepackage{amsfonts}
\usepackage{mathrsfs}
\usepackage{amssymb}
\usepackage{amsmath}
\usepackage{enumitem}\setlist{nolistsep}
\usepackage{amsthm}
\usepackage{hyperref}
\usepackage[capitalize]{cleveref}
\usepackage{tikz-cd}
\usepackage{todonotes}


\RequirePackage[backend=bibtex,style=alphabetic,
url=false,doi=false,maxbibnames=50,maxcitenames=50,maxnames=50]{biblatex}
\defbibheading{bibliography}[Bibliography]{\section*{#1}\markboth{#1}{#1}}
\addbibresource{amenableBerg.bib}

\settrims{0pt}{0pt}
\setheadfoot{0.5in}{0.5in}
\setulmarginsandblock{1.0in}{1.0in}{*}
\setlrmarginsandblock{1.0in}{1.0in}{*}
\checkandfixthelayout

\renewcommand{\printtoctitle}[1]{\large{\textbf{Contents}}}

\OnehalfSpacing
\setlength{\intextsep}{0.5em}
\setlength{\floatsep}{0.5em}
\setlength{\textfloatsep}{0.5em}

\setsecheadstyle{\large\bfseries}

\setsecnumdepth{subsubsection}
\setsecnumformat{\csname the#1\endcsname. }
\setsubsecheadstyle{\normalfont\bfseries}

\counterwithout{section}{chapter}
\counterwithout{equation}{chapter}

\newtheoremstyle{plain}{3mm}{3mm}{\slshape}{}{\bfseries}{.}{.5em}{}
\newtheoremstyle{definition}{2mm}{2mm}{}{}{\bfseries}{.}{.5em}{}

\theoremstyle{plain}
\newtheorem{theorem}{Theorem}[section]

\newtheorem{proposition}[theorem]{Proposition}
\newtheorem{corollary}[theorem]{Corollary}
\newtheorem{lemma}[theorem]{Lemma}

\theoremstyle{definition}
\newtheorem{example}[theorem]{Example}
\newtheorem{question}[theorem]{Question}

\newtheorem{definition}[theorem]{Definition}

\theoremstyle{plain}
\newtheorem*{namedthm}{\namedthmname}
\newcounter{namedthm}
\makeatletter
	
\makeatother

\usepackage{xcolor}

\definecolor{Scarlet}{rgb}{0.78, 0.11, 0.0}
\definecolor{Blue}{rgb}{0.0, 0.42, 0.47}
\definecolor{Green}{rgb}{0.39, 0.71 ,0.0}

\hypersetup{citecolor = Scarlet,colorlinks,
			linkcolor = blue,
			urlcolor = Scarlet}

\newcommand{\N}{\mathbb{N}}
\newcommand{\Z}{\mathbb{Z}}
\newcommand{\R}{\mathbb{R}}
\newcommand{\T}{\mathbb{T}}
\newcommand{\C}{\mathbb{C}}

\newcommand{\define}[1]{\textbf{#1}}
\renewcommand{\epsilon}{\varepsilon}
\renewcommand{\leq}{\leqslant}
\renewcommand{\geq}{\geqslant}
\DeclareMathOperator{\id}{id}

\newcommand{\E}{\mathbb{E}}

\newcommand{\1}{1}

\newcommand{\nbar}{|\!|}
\newcommand{\norm}[2]{\nbar {#1} \nbar_{#2}}
\newcommand{\bnbar}{\Bigg\vert\!\Bigg\vert}
\newcommand{\bnorm}[2]{\bnbar {#1} \bnbar_{#2}}
\newcommand{\Aut}{\mathsf{Aut}}
\newcommand{\cont}{\mathsf{C}}
\newcommand{\kron}{\mathsf{K}}
\newcommand{\ap}{\mathsf{AP}}
\newcommand{\wm}{\mathsf{WM}}
\newcommand{\intd}{~\mathsf{d}}
\renewcommand{\d}{~\mathsf{d}}
\newcommand{\joinings}{\mathcal{J}}
\newcommand{\ergjoinings}{\mathcal{J}_\mathsf{e}}
\DeclareMathOperator{\lp}{L}
\newcommand{\bilin}[2]{\langle {#1}, {#2} \rangle}

\newcommand{\pionethree}{\pi_{1,3}}

\newcommand{\Eig}{\mathsf{Eig}}
\newcommand{\Folner}{F\o{}lner}
\newcommand{\expec}{\mathbb{E}}
\newcommand{\condex}[2]{\expec({#1}\vert{#2})}
\newcommand{\WW}{{\mathbf W}'}


\newcommand{\change}[1]{{#1}}

\title{Disjointness for measurably distal group actions and applications}
\author{Joel Moreira \and Florian K. Richter \and Donald Robertson}

\begin{document}

\allowdisplaybreaks

\date{\small \today}

\maketitle

\begin{abstract}
We generalize Berg's notion of quasi-disjointness to actions of countable groups and prove that every measurably distal system is quasi-disjoint from every measure preserving system.
As a corollary we obtain easy to check necessary and sufficient conditions for two systems to be disjoint, provided one of them is measurably distal.
We also obtain a Wiener--Wintner type theorem for countable amenable groups with distal weights and applications to weighted multiple ergodic averages and multiple recurrence.
\end{abstract}

\tableofcontents*

\section{Introduction}

By a $\mathbb{Z}$ \define{system} we mean a tuple $\mathbf{X} = (X,T,\mu_X)$ where $X$ is a compact, metric space, $T$ is a continuous action of $\mathbb{Z}$ on $X$ and $\mu_X$ is a Borel probability measure on $X$ that is $T$ invariant.
Ergodic $\mathbb{Z}$ systems $(X,T,\mu_X)$ and $(Y,S,\mu_Y)$ are \define{disjoint} if $\mu_X \otimes \mu_Y$ is the only probability measure on $X \times Y$ that has $\mu_X$ and $\mu_Y$ as its marginals and is invariant under the diagonal action $(T \times S)^n = T^n \times S^n$.
(Any probability measure on $X \times Y$ with these two properties is called a \define{joining} of the two $\mathbb{Z}$ systems.)
The notion of disjointness -- introduced in Furstenberg's seminal paper \cite{Furstenberg67} -- is an extreme form of non-isomorphism.
In particular, if systems have a non-trivial factor in common then they cannot be disjoint.
Furstenberg asked whether the converse is true.
Rudolph~\cite{MR555301} answered this question by producing (from his construction in the same paper of a $\mathbb{Z}$ system with minimal self-joinings) two $\mathbb{Z}$ systems that are not disjoint and yet share no common factor.

Perhaps motivated by Furstenberg's question, Berg~\cite{Berg71,Berg72} considered the case when one of the systems is measurably distal.
Recall that a $\mathbb{Z}$ system is \define{measurably distal} if it belongs to the smallest class of $\mathbb{Z}$ systems that contains the trivial system and is closed under factors, group extensions and inverse limits - see Section~\ref{sec:prelims} for the definitions of these notions.
That the above definition of measurably distal is equivalent to Parry's original definition \cite{Parry68} in terms of separating sieves was proved by Zimmer~\cite{Zimmer76} (cf.\ Subsection \ref{subsec:prelim-distal} below).

To describe Berg's result, recall that the \define{Kronecker factor} of an ergodic $\mathbb{Z}$ system is the largest factor of the system that is isomorphic to a rotation on a compact abelian group.
Berg proved that disjointness of the Kronecker factors of two ergodic $\mathbb{Z}$ systems is equivalent to both disjointness and the absence of a common factor when one of the systems is measurably distal.
\change{We say that systems $\mathbf{X}$ and $\mathbf{Y}$ are \define{Kronecker disjoint} if their Kronecker factors are disjoint.}

\begin{theorem}[Berg, \cite{Berg71,Berg72}]
\label{thm:bergWeylDisjoint}
Let $\mathbf{X}$ be an ergodic and measurably distal $\Z$ system and let $\mathbf{Y}$ be an ergodic $\Z$ system. The following are equivalent:
\begin{enumerate}	
[label=(\roman{enumi}),ref=(C\roman{enumi}),leftmargin=*]
\item
$\mathbf{X}$ and $\mathbf{Y}$ are disjoint;
\item
$\mathbf{X}$ and $\mathbf{Y}$ are Kronecker disjoint;
\item
$\mathbf{X}$ and $\mathbf{Y}$ have no non-trivial common factor.
\end{enumerate}
\end{theorem}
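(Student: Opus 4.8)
The plan is to prove the cycle of implications $(i)\Rightarrow(iii)\Rightarrow(ii)\Rightarrow(i)$, reserving the distality hypothesis for the final step. For $(i)\Rightarrow(iii)$ I would argue by contraposition: if $\mathbf{X}$ and $\mathbf{Y}$ shared a nontrivial common factor $\mathbf{Z}$, then the relatively independent joining of $\mathbf{X}$ and $\mathbf{Y}$ over $\mathbf{Z}$ projects onto the graph joining of $\mathbf{Z}$ with itself, which differs from the product because $\mathbf{Z}$ is nontrivial. Hence this joining differs from $\mu_X \otimes \mu_Y$ and the systems are not disjoint. This step uses nothing about distality.

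For $(iii)\Rightarrow(ii)$ I would reduce to the classical fact that two ergodic rotations on compact abelian groups are disjoint precisely when they have no nontrivial common factor; this follows from Goursat's lemma applied to the closed subgroup of $G \times H$ supporting an ergodic joining, which forces any nonproduct ergodic joining to encode a common quotient rotation. Granting this, suppose $\kron\mathbf{X}$ and $\kron\mathbf{Y}$ were not disjoint. Then they would share a nontrivial common factor $\mathbf{W}$, and since $\mathbf{W}$ is a factor of $\kron\mathbf{X}$ and of $\kron\mathbf{Y}$ it is a factor of $\mathbf{X}$ and of $\mathbf{Y}$, contradicting $(iii)$. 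Again distality plays no role.

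The substance of the theorem is $(ii)\Rightarrow(i)$, and this is where measurable distality enters. Here I would route the argument through quasi-disjointness: the goal is to show that a measurably distal $\mathbf{X}$ is quasi-disjoint from the arbitrary ergodic system $\mathbf{Y}$, meaning that every ergodic joining $\lambda$ of $\mathbf{X}$ and $\mathbf{Y}$ is relatively independent over the sub-$\sigma$-algebra generated by $\kron\mathbf{X}$ and $\kron\mathbf{Y}$, i.e.\ over the joining $\lambda_{\kron}$ that $\lambda$ induces on the Kronecker factors. Since $\mathbf{X}$ and $\mathbf{Y}$ are ergodic it suffices to treat ergodic joinings: the product measure is the unique joining as soon as it is the unique ergodic joining, because $\mu_X$ and $\mu_Y$ ergodic forces almost every ergodic component of a joining to again be a joining with the correct marginals. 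Granting quasi-disjointness, assumption $(ii)$ forces $\lambda_{\kron} = \mu_{\kron\mathbf{X}} \otimes \mu_{\kron\mathbf{Y}}$, and relative independence over a product measure collapses $\lambda$ to $\mu_X \otimes \mu_Y$, giving disjointness.

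It remains to establish quasi-disjointness, which I would prove by transfinite induction along the Furstenberg--Zimmer tower exhibiting $\mathbf{X}$ as an inverse limit of a tower of compact group (isometric) extensions over the trivial system. The base case is immediate, and inverse limits pose no difficulty, since an ergodic joining is determined by its restrictions to the members of the tower, so quasi-disjointness passes to the limit. The crucial and hardest step is the successor stage: assuming $\mathbf{X}$ is quasi-disjoint from $\mathbf{Y}$ and that $\mathbf{X}'$ is a group extension of $\mathbf{X}$ by a cocycle into a compact group, I would take an ergodic joining $\lambda'$ of $\mathbf{X}'$ and $\mathbf{Y}$, push it down to an ergodic joining $\lambda$ of $\mathbf{X}$ and $\mathbf{Y}$ to which the inductive hypothesis applies, and then analyze the additional correlation carried by the group fibres. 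The main obstacle is precisely this fibre analysis: one must show that any coupling between $\mathbf{X}'$ and $\mathbf{Y}$ created over $\mathbf{X}$ either already descends to the Kronecker factor $\kron\mathbf{X}'$, where it is accounted for by $\lambda_{\kron}$, or is independent of $\mathbf{Y}$. Carrying this out requires a Mackey-type analysis of the cocycle, decomposing the fibre correlations according to the characters of the structure group and using the relatively independent joining over $\mathbf{X}$ to rule out new, non-Kronecker correlations; this is the step where the distal structure is indispensable, and extending it to general countable groups (as this paper does) adds the further bookkeeping needed beyond the classical $\Z$ case.
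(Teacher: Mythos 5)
Your overall architecture is sound and matches the paper's: the implications (i)$\Rightarrow$(iii) and (iii)$\Rightarrow$(ii) are soft (the relatively independent joining over a common factor, and the classification of ergodic joinings of group rotations as Haar measures on cosets, exactly as in Section~\ref{sec:samequasidisjoint}), and the entire content of the theorem is (ii)$\Rightarrow$(i), which the paper also obtains by showing that a measurably distal system is quasi-disjoint from everything (\cref{thm:main-result}) and then collapsing a joining that is relatively independent over the product of the Kronecker factors. Your reduction to ergodic joinings and the collapse ``relative independence over a product is the product'' are both correct.

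The genuine gap is that you never prove the successor step. You correctly identify the group-extension stage as ``the crucial and hardest step'' and ``the main obstacle,'' but what follows is a description of what would need to be shown (``one must show that any coupling \dots either already descends to $\kron\mathbf{X}'$ \dots or is independent of $\mathbf{Y}$'') together with an appeal to an unspecified ``Mackey-type analysis of the cocycle.'' That is a restatement of the goal, not an argument, and it is precisely where all the difficulty of Berg's theorem lives. For comparison, the paper's proof of \cref{thm:groupextensions} avoids cocycle/character analysis entirely: given a joining $\lambda$ of the extension $\mathbf{X}$ with $\mathbf{Z}$ projecting to $\mu_{\kron X}\otimes\mu_{\kron Z}$, one first pushes down to the base and uses the inductive hypothesis to get $(\pi_{\mathbf{X},\mathbf{Y}}\times\id)\lambda=\mu_Y\otimes\mu_Z$; then one averages $\lambda$ over the compact structure group $L$ against a density $\psi$ to form $\lambda_\psi$, observes $\lambda_1=\mu_X\otimes\mu_Z$, deduces $\lambda_\psi\ll\mu_X\otimes\mu_Z$ with an invariant Radon--Nikodym derivative $F_\psi$, and uses that invariant functions lie in $\ap(\mathbf{X})\otimes\ap(\mathbf{Z})$ (\cref{prop:K-factor-of-product}) together with the hypothesis on the Kronecker projection to force $\lambda_\psi=(\int\psi\intd\mu_L)\,\mu_X\otimes\mu_Z$; an approximate identity then gives $\lambda=\mu_X\otimes\mu_Z$. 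Some argument of this kind (or a genuine Mackey analysis carried out in full) must be supplied for your proof to be complete.

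A second, smaller omission: if you run the induction along the Zimmer tower of \emph{isometric} extensions, the successor step is for isometric extensions, which are only factors of group extensions; if instead you use group extensions (as the definition of measurably distal in this paper does), you must additionally show that quasi-disjointness passes to factors. The paper needs a separate and nontrivial argument for this (\cref{thm:factors}, via relatively independent joinings over $\kron\mathbf{Y}$ and the lifting of joinings in \cref{lem:lift-of-joining}). Your sketch silently elides this step, and it cannot be dispensed with in either formulation.
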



The key ingredient in the proof of Theorem~\ref{thm:bergWeylDisjoint} is a weakening of the notion of disjointness that is shown to be preserved by group extensions, factors and inverse limits.
The definition of this weakened property (called ``quasi-disjointness'' in \cite{Berg71}) is as follows.
Given ergodic $\mathbb{Z}$ systems $\mathbf{X}$ and $\mathbf{Y}$ let $\alpha$ and $\beta$ be the factor maps from $\mathbf{X}$ and $\mathbf{Y}$ respectively to their maximal common Kronecker factor $\kron(\mathbf{X},\mathbf{Y})$.
The $\mathbb{Z}$ systems $\mathbf{X}$ and $\mathbf{Y}$ are quasi-disjoint if the property
\begin{itemize}[leftmargin=2cm]
\item[(BQD)]
for almost every $k$ in $\kron(\mathbf{X},\mathbf{Y})$ there is exactly one joining of the systems $\mathbf{X}$ and $\mathbf{Y}$ giving full measure to $\gamma^{-1}(k)$
\end{itemize}
holds, where $\gamma(x,y)=\alpha(x)-\beta(y)$.
The main results in \cite{Berg71,Berg72} imply that $\mathbf{X}$ and $\mathbf{Y}$ satisfy (BQD) whenever $\mathbf{X}$ is an ergodic and measurably distal $\mathbb{Z}$ system and $\mathbf{Y}$ is an ergodic $\mathbb{Z}$ system.

In this paper we introduce a new definition of quasi-disjointness that applies to measure preserving actions of any countable group $G$.
To describe it we recall the following notions.
A $G$ \define{system} is a tuple $(X,T,\mu_X)$ where $X$ is a compact, metric space, $T$ is a continuous left action of $G$ on $X$ and $\mu_X$ is a Borel probability measure on $X$ that is $T$ invariant.
The \define{Kronecker factor} of a $G$ system $\mathbf{X}$ is the factor $\kron \mathbf{X}$ corresponding to the subspace of $\lp^2(X,\mu_X)$ spanned by functions $f$ with the property that $\{ f \circ T^g : g \in G \}$ has compact closure.
Disjointness of $G$ systems is defined just as for $\mathbb{Z}$ systems.

Although it is the case for ergodic $\mathbb{Z}$ systems, the Kronecker factor of an ergodic $G$ system cannot generally be modeled by a rotation on a compact, abelian group.
Thus it is not clear how to modify (BQD) or Berg's proofs to apply to actions of more general groups.
We instead make the following definition, which is more general and easier to handle than (BQD).

\begin{definition}
\label{def:quasidisjoint}
Two $G$ systems $\mathbf{X} = (X,T,\mu_X)$ and $\mathbf{Y} = (Y,S,\mu_Y)$ are \define{quasi-disjoint} if the only joining of $\mathbf{X}$ and $\mathbf{Y}$ that projects to the product measure on the product $\kron\mathbf{X} \times \kron \mathbf{Y}$ of their Kronecker factors is the trivial joining $\mu_X \otimes \mu_Y$.
\end{definition}

Our first result (proved in \cref{sec:samequasidisjoint}) is that ergodic $\mathbb{Z}$ systems $\mathbf{X}$ and $\mathbf{Z}$ are quasi-disjoint according to \cref{def:quasidisjoint} if and only if they satisfy (BQD), justifying the use of the terminology ``quasi-disjoint''.

\begin{theorem}\label{thm_samequasidisjoint}
Ergodic $\mathbb{Z}$ systems $\mathbf{X}$ and $\mathbf{Y}$ are quasi-disjoint if and only if they satisfy (BQD).
\end{theorem}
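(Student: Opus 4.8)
The plan is to exploit the Halmos--von Neumann theorem, which represents the Kronecker factor of each ergodic $\Z$ system as a rotation on a compact abelian group. Write $\kron\mathbf{X}=(K_X,+a)$ and $\kron\mathbf{Y}=(K_Y,+b)$ with Haar measures $m_{K_X},m_{K_Y}$, and let $K=\kron(\mathbf{X},\mathbf{Y})$ be the maximal common Kronecker factor, with the factor maps $\alpha\colon\mathbf{X}\to K$ and $\beta\colon\mathbf{Y}\to K$ of (BQD) factoring through homomorphisms $\pi_X\colon K_X\to K$ and $\pi_Y\colon K_Y\to K$ with $\pi_X(a)=\pi_Y(b)=:c$; then $\gamma(x,y)=\alpha(x)-\beta(y)$. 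A direct computation gives $\gamma(T^nx,S^ny)=\gamma(x,y)$, so $\gamma$ is invariant under the diagonal action and its fibers $\gamma^{-1}(k)$ are invariant sets. First I would record the group-theoretic fact that $\gamma$ factors through the difference map $\phi(\xi,\eta)=\pi_X(\xi)-\pi_Y(\eta)$ on $K_X\times K_Y$ and that $\phi$ generates the invariant $\sigma$-algebra of the product rotation, i.e.\ $K\cong(K_X\times K_Y)/H$ with $H=\overline{\langle(a,b)\rangle}$. This is a character computation: a character $(\psi,\rho)$ of $K_X\times K_Y$ is trivial on $H$ iff $\psi(a)\rho(b)=1$, and ergodicity forces $\psi=\chi\circ\pi_X$ and $\rho=\overline{\chi\circ\pi_Y}$ for a unique $\chi\in\hat K$. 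Consequently the ergodic decomposition of $m_{K_X}\otimes m_{K_Y}$ over $\gamma$ is $\int_K(\text{Haar on }\phi^{-1}(k))\,dm_K(k)$.

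The technical core is a disintegration lemma, which I would prove for an \emph{arbitrary} joining $\lambda$ of $\mathbf{X}$ and $\mathbf{Y}$: disintegrating $\lambda=\int\lambda_k\,d(\gamma_*\lambda)(k)$ over the invariant map $\gamma$, each fiber measure $\lambda_k$ is again a joining giving full measure to $\gamma^{-1}(k)$. Invariance of $\lambda_k$ is immediate from invariance of $\gamma$ and essential uniqueness of the disintegration. The substance is that the marginals are correct, which I would extract from the identity $\int f(x)\,g(\gamma(x,y))\,d\lambda=\int f\,d\mu_X\int g\,d(\gamma_*\lambda)$, valid for all $f\in\lp^2(\mu_X)$ and $g\in\cont(K)$. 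Expanding $f$ and $g$ into characters reduces each summand to the $\lambda$-integral of an eigenfunction of $T\times S$ whose eigenvalue is the frequency $\psi(a)$ coming from $f$; such an integral vanishes unless $\psi(a)=1$, and ergodicity of $\mathbf{X}$ forces $\psi$ trivial, leaving exactly the asserted product. The symmetric computation using ergodicity of $\mathbf{Y}$ handles the $Y$-marginal. Hence each $\lambda_k$ lies in $\mathcal{B}_k$, the set of joinings giving full measure to $\gamma^{-1}(k)$.

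Next I would set up the bridge between the two notions. On one hand, if $\lambda$ projects to the product measure on $\kron\mathbf{X}\times\kron\mathbf{Y}$ then $\gamma_*\lambda=\phi_*(m_{K_X}\otimes m_{K_Y})=m_K$, since the difference of two independent Haar-random elements is Haar; thus the disintegration runs over all of $K$ against Haar. On the other hand, given any measurable selection $k\mapsto\lambda_k\in\mathcal{B}_k$, the measure $\int_K\lambda_k\,dm_K(k)$ is a joining whose Kronecker projection is $\int_K\bar\lambda_k\,dm_K(k)$; each $\bar\lambda_k$ is an invariant measure supported on the single $H$-coset $\phi^{-1}(k)$, hence equals Haar on that coset by unique ergodicity, and integrating returns $m_{K_X}\otimes m_{K_Y}$. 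Thus integration and disintegration are mutually inverse (using essential uniqueness of the disintegration over $\gamma$) between the convex set $\mathcal{A}$ of joinings projecting to the product on the Kronecker factors and the measurable selections of $(\mathcal{B}_k)_{k\in K}$, under which $\mu_X\otimes\mu_Y$ corresponds to the selection $\nu_k:=(\mu_X\otimes\mu_Y)_k$.

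With this correspondence one direction is immediate: if (BQD) holds, i.e.\ $\mathcal{B}_k$ is a singleton for a.e.\ $k$, then $(\nu_k)$ is the only selection, so $\mathcal{A}=\{\mu_X\otimes\mu_Y\}$ and $\mathbf{X},\mathbf{Y}$ are quasi-disjoint in the sense of \cref{def:quasidisjoint}. For the converse I would argue contrapositively: if (BQD) fails then $\{k:\lvert\mathcal{B}_k\rvert\geq2\}$ has positive $m_K$-measure, and a measurable selection produces some $\lambda_k$ differing from $\nu_k$ on a positive-measure set, whose integral is a second element of $\mathcal{A}$, contradicting quasi-disjointness. I expect the main obstacle to be exactly this last step: checking that $k\mapsto\mathcal{B}_k$ is a measurably varying family of compact convex sets, invoking the appropriate measurable selection theorem, and confirming that distinct selections integrate to distinct joinings (which again follows from essential uniqueness of the disintegration). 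The earlier identification $K\cong(K_X\times K_Y)/H$, needed to know that each fiber $\phi^{-1}(k)$ is a single $H$-coset, is the other point requiring care.
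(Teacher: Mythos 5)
Your proposal is correct and follows essentially the same route as the paper: both identify the joint Kronecker factor with the quotient $(K_X\times K_Y)/H$, decompose $\mu_X\otimes\mu_Y$ over the invariant map $\gamma$ into Haar-on-coset fiber joinings, and match joinings projecting to the product on $\kron\mathbf{X}\times\kron\mathbf{Y}$ with (measurable selections of) joinings supported on the fibers $\gamma^{-1}(k)$. The differences are only in presentation --- you make explicit the eigenfunction computation showing that the $\gamma$-disintegration of an arbitrary joining consists of joinings, and you flag the measurable-selection point that the paper's appeal to uniqueness of the ergodic disintegration leaves implicit.
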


Our main result is an extension of Berg's main results in \cite{Berg71,Berg72} to $G$ systems.
Recall that a $G$ system is \define{measurably distal} if it belongs to the smallest class of $G$ systems that is closed under factors, group extensions and inverse limits -- notions that are defined in Section~\ref{sec:prelims}.

\begin{theorem}
\label{thm:main-result}
If $G$ is a countable group and $\mathbf{X}$ is a measurably distal $G$ system, then $\mathbf{X}$ is quasi-disjoint from any other $G$ system $\mathbf{Y}$.
\end{theorem}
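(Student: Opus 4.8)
The plan is to argue by induction on the distal structure. Write $\mathcal{C}$ for the class of $G$ systems $\mathbf{X}$ that are quasi-disjoint from every $G$ system $\mathbf{Y}$. Since the measurably distal systems form the smallest class containing the trivial system and closed under factors, group extensions and inverse limits, it suffices to check that $\mathcal{C}$ contains the trivial system and enjoys these three closure properties; then every measurably distal $\mathbf{X}$ lies in $\mathcal{C}$, which is exactly the assertion. The trivial system is in $\mathcal{C}$ because any joining with a one-point system is forced to be the product. The only structural inputs I will use are that conditional expectation onto a factor $\mathbf{X}\to\mathbf{X}''$ maps $\lp^2(\kron\mathbf{X})$ into $\lp^2(\kron\mathbf{X}'')$ (it commutes with the $G$ action and hence preserves precompactness of orbits), so $\kron\mathbf{X}''$ is a factor of $\kron\mathbf{X}$; and the classical fact that every $(T\times S)$ invariant function of a product system $(\mathbf{A}\times\mathbf{B},\mu_A\otimes\mu_B)$ lies in $\lp^2(\kron\mathbf{A})\otimes\lp^2(\kron\mathbf{B})$ (diagonalise by the singular value decomposition, noting each singular subspace is finite dimensional and $G$ invariant, hence almost periodic).

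The inverse limit and factor steps are comparatively soft. If $\mathbf{X}=\varprojlim\mathbf{X}_n$ with each $\mathbf{X}_n\in\mathcal{C}$ and $\lambda$ is a joining of $\mathbf{X}$ and $\mathbf{Y}$ that is product on $\kron\mathbf{X}\times\kron\mathbf{Y}$, then each projection of $\lambda$ to $\mathbf{X}_n\times\mathbf{Y}$ is product on $\kron\mathbf{X}_n\times\kron\mathbf{Y}$ (as $\kron\mathbf{X}_n$ is a factor of $\kron\mathbf{X}$), hence equals $\mu_{X_n}\otimes\mu_Y$; since the $\mathbf{X}_n$ generate $\mathbf{X}$ this forces $\lambda=\mu_X\otimes\mu_Y$. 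For factors, given $\mathbf{X}\in\mathcal{C}$, a factor $\mathbf{X}\to\mathbf{X}''$, and a joining $\lambda''$ of $\mathbf{X}''$ and $\mathbf{Y}$ that is product on $\kron\mathbf{X}''\times\kron\mathbf{Y}$, I would lift $\lambda''$ to the relatively independent joining $\lambda$ of $\mathbf{X}$ and $\mathbf{Y}$ over $\mathbf{X}''$. Using that conditional expectation onto $\mathbf{X}''$ sends $\kron\mathbf{X}$ into $\kron\mathbf{X}''$, one checks $\lambda$ is again product on $\kron\mathbf{X}\times\kron\mathbf{Y}$, so $\lambda=\mu_X\otimes\mu_Y$ and projecting back gives $\lambda''=\mu_{X''}\otimes\mu_Y$.

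The group extension step is where the real work lies and is the main obstacle. Suppose $\mathbf{X}$ is an extension of $\mathbf{X}'\in\mathcal{C}$ by a compact group $K$ acting fibrewise on $X\cong X'\times K$, with the $G$ action twisted by a cocycle $\rho\colon G\times X'\to K$, and let $\lambda$ be a joining of $\mathbf{X}$ and $\mathbf{Y}$ that is product on $\kron\mathbf{X}\times\kron\mathbf{Y}$. First I average over the fibrewise $K$ action: $\bar\lambda:=\int_K k_*\lambda\,\d k$ is a $K$ invariant joining which descends to a joining of $\mathbf{X}'$ and $\mathbf{Y}$ that is product on $\kron\mathbf{X}'\times\kron\mathbf{Y}$ (using that $K$ preserves $\kron\mathbf{X}$ and $\mu_{\kron\mathbf{X}}$), hence $\bar\lambda=\mu_X\otimes\mu_Y$ by the inductive hypothesis. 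In particular the image of $\lambda$ on $X'\times Y$ is $\mu_{X'}\otimes\mu_Y$, so disintegrating $\lambda$ over this base writes it as a measurable family of probability measures $\nu_{(x',y)}$ on the fibre $K$, and $\lambda=\mu_X\otimes\mu_Y$ precisely when every $\nu_{(x',y)}$ is Haar.

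To establish this I would pass to Fourier coefficients along $K$: for each irreducible representation $\pi$ set $M_\pi(x',y)=\int_K\pi(k)\,\d\nu_{(x',y)}(k)$, so Haar fibres correspond to $M_\pi\equiv0$ for every nontrivial $\pi$. Invariance of $\lambda$ translates into the cocycle identity
\[
M_\pi\big((T\times S)^g(x',y)\big)=\pi\big(\rho(g,x')\big)\,M_\pi(x',y),
\]
and since $\pi(\rho(g,x'))$ is unitary the function $W_\pi(x',k,y):=\mathrm{tr}\big(M_\pi(x',y)^{*}\pi(k)\big)$ is a genuine $(T\times S)$ invariant function of the \emph{product} system $(\mathbf{X}\times\mathbf{Y},\mu_X\otimes\mu_Y)$, nonzero exactly when $M_\pi\neq0$. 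By the invariant-function fact above, $W_\pi\in\lp^2(\kron\mathbf{X})\otimes\lp^2(\kron\mathbf{Y})$, and being of $K$ type $\pi$ nontrivial its $\mathbf{X}$ factors are nonconstant elements of $\kron\mathbf{X}$. A direct computation using the disintegration and Schur orthogonality then yields
\[
(\dim\pi)\,\|W_\pi\|_{\mu_X\otimes\mu_Y}^{2}=\int_{X\times Y}\overline{W_\pi}\,\d\lambda,
\]
and writing $W_\pi=\sum_i a_i\otimes b_i$ with $a_i\in\kron\mathbf{X}$, $\int a_i\,\d\mu_X=0$ and $b_i\in\kron\mathbf{Y}$, the right-hand side vanishes because $\lambda$ is product on $\kron\mathbf{X}\times\kron\mathbf{Y}$. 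Hence $W_\pi=0$, so $M_\pi\equiv0$ for all nontrivial $\pi$, every $\nu_{(x',y)}$ is Haar, and $\lambda=\mu_X\otimes\mu_Y$, completing the induction. The crux throughout is this extension step, and its essential idea is to convert the failure of a fibre measure to be Haar into a nonzero invariant function of the product system, at which point the hypothesis that $\lambda$ is product over the Kronecker factors forces that function, and hence $\lambda$ itself, to be trivial.
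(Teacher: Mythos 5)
Your overall architecture coincides with the paper's: show that the class of systems quasi-disjoint from every $G$ system contains the trivial system and is closed under factors, group extensions and inverse limits. Your inverse-limit step is essentially the paper's argument. Your factor step is correct and in fact more direct than the paper's (which routes through the relatively independent joining $\kron\mathbf{X}\times_{\kron\mathbf{Y}}\mathbf{Y}$ and a lifting lemma): lifting $\lambda''$ by the relatively independent lift over $\mathbf{X}''$ and using that conditional expectation carries $\ap(\mathbf{X})$ into $\ap(\mathbf{X}'')$ does produce a joining of $\mathbf{X}$ with $\mathbf{Y}$ whose Kronecker projection is still the product, and the conclusion follows.

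The gap is in the group-extension step, at the line ``the right-hand side vanishes because $\lambda$ is product on $\kron\mathbf{X}\times\kron\mathbf{Y}$''. You expand $W_\pi=\sum_i a_i\otimes b_i$ with $a_i\in\lp^2(\kron X)$ of mean zero and $b_i\in\lp^2(\kron Y)$ and integrate term by term against $\lambda$. But that expansion is an identity in $\lp^2(X\times Y,\mu_X\otimes\mu_Y)$ only, i.e.\ up to a $\mu_X\otimes\mu_Y$-null set, and $\lambda$ need not be absolutely continuous with respect to $\mu_X\otimes\mu_Y$; a $\mu_X\otimes\mu_Y$-null set can carry positive $\lambda$-mass, so $\int\overline{W_\pi}\d\lambda$ is not determined by the $\lp^2(\mu_X\otimes\mu_Y)$-class of $W_\pi$ together with the Kronecker projection of $\lambda$. (Compare the diagonal self-joining of a weakly mixing system: the indicator of the diagonal is a $(T\times T)$-invariant function equal to $0$ in $\lp^2(\mu\otimes\mu)$, hence trivially lies in $\ap\otimes\ap$ with all components of mean zero, yet integrates to $1$ against the joining.) Your Schur-orthogonality identity $(\dim\pi)\|W_\pi\|^2_{\mu_X\otimes\mu_Y}=\int\overline{W_\pi}\d\lambda$ is correct, but the vanishing of its right-hand side is exactly what still has to be extracted from the hypothesis, and the hypothesis only controls $\lambda$-integrals of genuine products of Kronecker-measurable functions. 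This is precisely the difficulty the paper's proof is built to avoid: it forms the smoothed joinings $\lambda_\psi=\int\psi(l)\,(l\times\id)_*\lambda\d\mu_L(l)$, which satisfy $\lambda_\psi\le\norm{\psi}{\infty}(\mu_X\otimes\mu_Z)$ and hence admit a bounded invariant Radon--Nikodym derivative $F_\psi\in\ap(\mathbf{X})\otimes\ap(\mathbf{Z})$ against which all such manipulations are legitimate, and then lets $\psi$ run through an approximate identity. You already have the needed ingredient ($\bar\lambda=\mu_X\otimes\mu_Y$), so your argument can be repaired along these lines, but as written the key step is unjustified. A secondary point: the paper's group extensions are given by an arbitrary compact $L\le\Aut(\mathbf{X})$, whose generic fibre is a homogeneous space of $L$ rather than $L$ itself, so the skew-product model $X\cong X'\times K$ with a cocycle, on which your Fourier analysis relies, also needs a word of justification.
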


As a consequence of Theorem~\ref{thm:main-result} we obtain the following characterizations of disjointness from a measurably distal system.

\begin{corollary}
\label{1st_cor_of_main_result}
If $G$ is a countable group, $\mathbf{X}$ is a measurably distal $G$ system and $\mathbf{Y}$ is a $G$ system then the following are equivalent:
\begin{enumerate}	
[label=(\roman{enumi}),ref=(\roman{enumi}),leftmargin=*]
\item\label{cor:1.5-item-i}
$\mathbf{X}$ and $\mathbf{Y}$ are disjoint;
\item\label{cor:1.5-item-ii}
$\mathbf{X}$ and $\mathbf{Y}$ are Kronecker disjoint;
\item\label{cor:1.5-item-iii}
$\mathbf{X}$ and $\mathbf{Y}$ have no nontrivial common factor.
\end{enumerate}
If additionally $\mathbf{X}$ and $\mathbf{Y}$ are ergodic then \ref{cor:1.5-item-i} -- \ref{cor:1.5-item-iii} are also equivalent to
\begin{enumerate}	
[label=(\roman{enumi}),ref=(\roman{enumi}),leftmargin=*]
\setcounter{enumi}{3}
\item\label{cor:1.5-item-vi}
The product-system $\mathbf{X}\times\mathbf{Y}$ is ergodic.
\end{enumerate}
\end{corollary}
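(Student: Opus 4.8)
The plan is to prove the cycle of implications $\ref{cor:1.5-item-ii}\Rightarrow\ref{cor:1.5-item-i}\Rightarrow\ref{cor:1.5-item-iii}\Rightarrow\ref{cor:1.5-item-ii}$, invoking \cref{thm:main-result} for the one implication that carries genuine content and using soft joining arguments for the rest; the ergodic statement $\ref{cor:1.5-item-vi}$ is then attached at the end. For $\ref{cor:1.5-item-ii}\Rightarrow\ref{cor:1.5-item-i}$, let $p\colon\mathbf{X}\to\kron\mathbf{X}$ and $q\colon\mathbf{Y}\to\kron\mathbf{Y}$ be the Kronecker factor maps. Given any joining $\lambda$ of $\mathbf{X}$ and $\mathbf{Y}$, the pushforward $(p\times q)_*\lambda$ is a joining of $\kron\mathbf{X}$ and $\kron\mathbf{Y}$, so Kronecker disjointness forces it to be the product measure on $\kron\mathbf{X}\times\kron\mathbf{Y}$; thus $\lambda$ projects to that product, and \cref{thm:main-result} (quasi-disjointness of the distal system $\mathbf{X}$) yields $\lambda=\mu_X\otimes\mu_Y$, i.e.\ $\mathbf{X}$ and $\mathbf{Y}$ are disjoint. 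This is the step where the main theorem is used.

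For $\ref{cor:1.5-item-i}\Rightarrow\ref{cor:1.5-item-iii}$ I argue by contraposition: a nontrivial common factor $\mathbf{W}$, with factor maps $\mathbf{X}\to\mathbf{W}\leftarrow\mathbf{Y}$, gives rise to the relatively independent joining of $\mathbf{X}$ and $\mathbf{Y}$ over $\mathbf{W}$. Pairing a nonconstant function pulled back from $\mathbf{W}$ with itself separates this joining from $\mu_X\otimes\mu_Y$, so $\mathbf{X}$ and $\mathbf{Y}$ are not disjoint. No distality is needed here.

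The implication $\ref{cor:1.5-item-iii}\Rightarrow\ref{cor:1.5-item-ii}$ is the crux. Since $\kron\mathbf{X}$ and $\kron\mathbf{Y}$ are factors of $\mathbf{X}$ and $\mathbf{Y}$, any common factor of the two Kronecker systems would be a common factor of $\mathbf{X}$ and $\mathbf{Y}$; hence $\ref{cor:1.5-item-iii}$ implies that $\kron\mathbf{X}$ and $\kron\mathbf{Y}$ themselves share no nontrivial common factor, and it remains to show that two Kronecker systems with no common factor are disjoint. This is where the structure theory of compact (isometric) systems enters: modelling the ergodic compact $G$-systems as homogeneous spaces $K_i/L_i$ of compact groups and analysing a putative nontrivial joining by a Goursat-type argument produces a proper subgroup of $K_1\times K_2$ projecting densely onto each factor, hence a nontrivial common quotient and a common factor, contradicting the hypothesis. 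I expect this compact-systems disjointness lemma (together with the reduction to ergodic components needed when $\mathbf{X}$ or $\mathbf{Y}$ is not ergodic) to be the main obstacle, as it is the one ingredient not supplied directly by \cref{thm:main-result} or by formal joining manipulations.

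Finally, assume $\mathbf{X}$ and $\mathbf{Y}$ are ergodic. For $\ref{cor:1.5-item-i}\Rightarrow\ref{cor:1.5-item-vi}$, decompose $\mu_X\otimes\mu_Y$ into ergodic components; since the averaged marginals recover $\mu_X$ and $\mu_Y$, extremality of these ergodic measures forces almost every component to have marginals $\mu_X$ and $\mu_Y$, so each component is a joining and hence equals $\mu_X\otimes\mu_Y$ by disjointness, making the decomposition trivial and $\mathbf{X}\times\mathbf{Y}$ ergodic. To close the loop I prove $\ref{cor:1.5-item-vi}\Rightarrow\ref{cor:1.5-item-iii}$ by contraposition: a nontrivial common factor $\mathbf{W}$, with factor maps $\pi_X\colon\mathbf{X}\to\mathbf{W}$ and $\pi_Y\colon\mathbf{Y}\to\mathbf{W}$, is a factor of the distal system $\mathbf{X}$, hence distal and ergodic, and therefore not weakly mixing, so $\lp^2(\mathbf{W})$ carries a nontrivial finite-dimensional $G$-invariant subspace with orthonormal basis $h_1,\dots,h_d$. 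The associated kernel $\Phi(w,w')=\sum_i h_i(w)\overline{h_i(w')}$ is invariant under the diagonal action on $\mathbf{W}\times\mathbf{W}$ and nonconstant (its integral against $\mu_W\otimes\mu_W$ vanishes while $\Phi\neq 0$); pulling it back along $\pi_X\times\pi_Y$ produces a nonconstant invariant function on $\mathbf{X}\times\mathbf{Y}$, contradicting $\ref{cor:1.5-item-vi}$. The finite-dimensional formulation is what makes this work for non-abelian $G$, where nontrivial characters need not exist.
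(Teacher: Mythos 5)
The paper records this corollary as an immediate consequence of \cref{thm:main-result} and does not write out a separate proof, so the comparison is against the evident intended derivation. Your architecture is right, and most legs are sound: \ref{cor:1.5-item-ii}$\Rightarrow$\ref{cor:1.5-item-i} is exactly the intended application of \cref{thm:main-result}; \ref{cor:1.5-item-i}$\Rightarrow$\ref{cor:1.5-item-iii} via the relatively independent joining over a common factor is standard; \ref{cor:1.5-item-i}$\Rightarrow$\ref{cor:1.5-item-vi} by the ergodic decomposition of $\mu_X\otimes\mu_Y$ is correct; and your matrix-coefficient kernel for \ref{cor:1.5-item-vi}$\Rightarrow$\ref{cor:1.5-item-iii} works, though it silently uses that a nontrivial ergodic measurably distal system has a nontrivial finite-dimensional invariant subspace orthogonal to the constants (equivalently, a nontrivial Kronecker factor); that is the Furstenberg--Zimmer structure theorem and should be cited, not assumed. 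Note also that \ref{cor:1.5-item-i}$\Rightarrow$\ref{cor:1.5-item-ii} follows in one line from \cref{cor:lift-of-joining} (every joining of $\kron\mathbf{X}$ with $\kron\mathbf{Y}$ lifts to a joining of $\mathbf{X}$ with $\mathbf{Y}$), so you could prove \ref{cor:1.5-item-i}$\Leftrightarrow$\ref{cor:1.5-item-ii} with no compact-group theory and isolate all remaining difficulty in a single implication out of \ref{cor:1.5-item-iii}.

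The genuine gap is \ref{cor:1.5-item-iii}$\Rightarrow$\ref{cor:1.5-item-ii}. You correctly identify the needed ingredient --- ergodic almost periodic systems with no nontrivial common factor are disjoint --- but you only gesture at it: the Goursat argument on the Mackey models $K/H_{\mathbf{X}}$, $K/H_{\mathbf{Y}}$ is precisely the content the paper carries out explicitly only for $\Z$ systems (\cref{sec:samequasidisjoint}) and merely sets up for general $G$ (the joint Kronecker factor $K/\langle H_{\mathbf{X}},H_{\mathbf{Y}}\rangle$); one must analyse ergodic joinings via double cosets $H_{\mathbf{X}}\backslash K/H_{\mathbf{Y}}$ and check that a non-product ergodic joining really yields a nontrivial common homogeneous quotient, which requires care when $H_{\mathbf{X}},H_{\mathbf{Y}}$ are nontrivial. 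More seriously, the ``reduction to ergodic components'' you defer cannot be routine, because the implication fails without ergodicity: let $G$ act trivially on $X=\{a,b\}$ with masses $(\tfrac12,\tfrac12)$ (a group extension of the trivial system, hence measurably distal) and on $Y=\{c,d\}$ with masses $(\tfrac13,\tfrac23)$. Every coupling is an invariant measure, so neither \ref{cor:1.5-item-i} nor \ref{cor:1.5-item-ii} holds, yet the only common factor is the one-point system, so \ref{cor:1.5-item-iii} holds. Thus any correct proof of \ref{cor:1.5-item-iii}$\Rightarrow$\ref{cor:1.5-item-ii} must invoke ergodicity (and the corollary's unqualified statement should be read accordingly); as written, your plan does not survive this example, and this step needs to be either restricted to ergodic systems or supplied with an argument explaining how non-ergodicity is handled.
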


If $\mathbf{Y}$ is measurably distal and both $\mathbf{X}$ and $\mathbf{Y}$ are ergodic then, as pointed out to us by Glasner, the structure theory of measurably distal systems together with \cite[Theorem~3.30]{MR1958753} provide an alternative approach to proving \cref{1st_cor_of_main_result}.

We offer two applications of our results.
The first is a Wiener--Wintner type result with distal weights for measure preserving actions of countable amenable groups, proved in \cref{sec:wienerWintner}.
Recall that, when dealing with actions of amenable groups, one uses \Folner{} sequences to average orbits, where a \define{\Folner{} sequence} in a countable group $G$ is a sequence $N \mapsto \Phi_N$ of finite, non-empty subsets of $G$ such that
\[
\frac{|\Phi_N \cap g^{-1} \Phi_N|}{|\Phi_N|} \to 1
\]
for all $g$ in $G$.
Lindenstrauss~\cite{MR1865397} proved that the pointwise ergodic theorem holds for actions of amenable groups along \define{tempered} \Folner{} sequences -- those for which there is $C > 0$ with
\[
\left| \bigcup_{K < N} \Phi_K^{-1} \Phi_N \right| \le C |\Phi_N|
\]
for all $N \ge 2$.

\begin{theorem}
\label{thm:wienerWintner}
\label{thm:pointwise-ET-with-distal-weights}
Let $G$ be a countable discrete amenable group, let $\Phi$ be a tempered \Folner{} sequence on $G$ and let $\mathbf{Y} = (Y,S,\mu_Y)$ be an ergodic $G$ system.
For every $\phi$ in $\lp^1(Y,\mu_Y)$ there is a conull set $Y' \subset Y$ with the following property:
For any uniquely ergodic topological $G$ system $(X,T)$, with unique invariant measure $\mu_X$, such that the $G$ system $\mathbf{X}=(X,T,\mu_X)$ is measurably distal and Kronecker disjoint from $\mathbf{Y}$, and for any $f\in\cont(X)$, any $x\in X$ and any $y\in Y'$ we have
\begin{equation}
\label{eqn:wwGoal}
\lim_{N \to\infty} \frac{1}{|\Phi_N|} \sum_{g \in \Phi_N} f(T^g x)\phi(S^g y) = \int f \intd \mu_X \int \phi \intd \mu_Y.
\end{equation}
\end{theorem}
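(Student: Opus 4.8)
The plan is to reduce to a disjointness statement and then run a two-step argument: a soft convergence-of-empirical-measures argument that handles continuous weights with a single conull set, followed by a maximal-inequality truncation that upgrades the weight $\phi$ from $\cont(Y)$ to $\lp^1(Y,\mu_Y)$. First I would observe that the hypotheses force $\mathbf{X}$ and $\mathbf{Y}$ to be disjoint. Unique ergodicity of $(X,T)$ makes $\mu_X$ ergodic, so $\mathbf{X}$ is an ergodic, measurably distal $G$ system; since it is Kronecker disjoint from the ergodic system $\mathbf{Y}$, \cref{1st_cor_of_main_result} (which rests on \cref{thm:main-result}) gives that $\mathbf{X}$ and $\mathbf{Y}$ are disjoint, i.e.\ that $\mu_X \otimes \mu_Y$ is the only joining of $\mathbf{X}$ and $\mathbf{Y}$.

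The core step treats continuous weights. Fixing a countable dense subset of $\cont(Y)$ and applying Lindenstrauss's pointwise ergodic theorem \cite{MR1865397} to each member, I would intersect the resulting conull sets to obtain a conull $Y'' \subset Y$, depending only on $\mathbf{Y}$ and $\Phi$, such that for every $y \in Y''$ the empirical measures $\tfrac{1}{|\Phi_N|}\sum_{g\in\Phi_N}\delta_{S^g y}$ converge weak-$*$ to $\mu_Y$. At the same time, unique ergodicity of $(X,T)$ yields $\tfrac{1}{|\Phi_N|}\sum_{g\in\Phi_N}\delta_{T^g x}\to\mu_X$ for \emph{every} $x\in X$. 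Now fix $y\in Y''$ and any $x\in X$ and form the product empirical measures $\nu_N=\tfrac{1}{|\Phi_N|}\sum_{g\in\Phi_N}\delta_{(T^gx,\,S^gy)}$ on $X\times Y$. Because $\Phi$ is a \Folner{} sequence, every weak-$*$ limit point $\nu$ of $(\nu_N)$ is $T\times S$ invariant, and its marginals, being weak-$*$ continuous images of the marginals of $\nu_N$, are exactly $\mu_X$ and $\mu_Y$; thus $\nu$ is a joining of $\mathbf{X}$ and $\mathbf{Y}$. By disjointness $\nu=\mu_X\otimes\mu_Y$, and since the space of Borel probability measures on the compact metric space $X\times Y$ is weak-$*$ compact and metrizable, every limit point being the product forces $\nu_N\to\mu_X\otimes\mu_Y$. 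Integrating $f\otimes\phi$ with $f\in\cont(X)$ and $\phi\in\cont(Y)$ gives \eqref{eqn:wwGoal}. The decisive feature is that $Y''$ depends on neither $X$, $f$, $x$ nor $\phi$: unique ergodicity pins the $X$-marginal to $\mu_X$ for every $x$, and disjointness pins every joining to the product, so the single conull set $Y''$ works simultaneously over the whole class of admissible systems $\mathbf{X}$.

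To pass from $\cont(Y)$ to $\lp^1(Y,\mu_Y)$ I would invoke the weak-type $(1,1)$ maximal inequality of Lindenstrauss \cite{MR1865397} for $M\psi(y)=\sup_N \tfrac{1}{|\Phi_N|}\sum_{g\in\Phi_N}|\psi(S^gy)|$. Choosing $\phi_j\in\cont(Y)$ with $\|\phi-\phi_j\|_{\lp^1(\mu_Y)}\le 4^{-j}$, the maximal inequality together with Borel--Cantelli produces a conull set on which $M(\phi-\phi_j)(y)\to 0$; I let $Y'$ be its intersection with $Y''$, so $Y'$ depends only on $\phi$ (and on $\mathbf{Y},\Phi$). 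For $y\in Y'$ the estimate $\bigl|\tfrac{1}{|\Phi_N|}\sum_{g\in\Phi_N} f(T^gx)(\phi-\phi_j)(S^gy)\bigr|\le \|f\|_\infty\, M(\phi-\phi_j)(y)$, combined with the continuous case applied to $\phi_j$, gives $\limsup_N\bigl|\tfrac{1}{|\Phi_N|}\sum_{g\in\Phi_N} f(T^gx)\phi(S^gy)-\int f\intd\mu_X\int\phi\intd\mu_Y\bigr|\le \|f\|_\infty\bigl(M(\phi-\phi_j)(y)+\|\phi-\phi_j\|_{\lp^1(\mu_Y)}\bigr)$, which tends to $0$ as $j\to\infty$. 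The point is that the error factors as $\|f\|_\infty$ times a quantity living entirely on the $Y$ side, so the truncation is again uniform in $X$, $f$ and $x$.

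I expect the main obstacle to be precisely this uniformity: producing one conull $Y'$ valid for the proper class of all distal, uniquely ergodic, Kronecker-disjoint systems $\mathbf{X}$ and all $f,x$, rather than a set depending on the triple $(X,f,x)$. The argument above circumvents it by never enumerating over $\mathbf{X}$ at all: the joining argument uses only disjointness (which holds for every $\mathbf{X}$ in the class) and unique ergodicity (which is valid for every $x$, not merely almost every $x$), while the $\lp^1$-truncation error is dominated by a maximal function depending only on $\phi$ and $y$. The remaining verifications -- invariance of limit points of $\nu_N$ under the \Folner{} condition, weak-$*$ continuity of the marginal maps, and genericity of almost every $y$ -- are routine.
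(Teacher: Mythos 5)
Your proof is correct and follows the same overall route as the paper's: reduce to disjointness of $\mathbf{X}$ and $\mathbf{Y}$ via \cref{thm:main-result}, show that the empirical measures $\frac{1}{|\Phi_N|}\sum_{g\in\Phi_N}\delta_{(T^gx,S^gy)}$ converge weak-$*$ to the unique joining $\mu_X\otimes\mu_Y$ when the weight is continuous, then upgrade to $\lp^1$ weights by approximation. You deviate in two technical choices, both sound. Where the paper invokes Rosenthal's amenable-group Jewett--Krieger theorem to replace $\mathbf{Y}$ by a uniquely ergodic model (so that the $Y$-marginal of the empirical measures converges for \emph{every} $y$, giving $Y'=Y$ in the continuous case), you apply Lindenstrauss's pointwise ergodic theorem to a countable dense subset of $\cont(Y)$ and work on the resulting conull set of generic points; this avoids a nontrivial external input, and since the theorem only asks for a conull set nothing is lost (the refinement that $Y'=Y$ for continuous $\phi$ when $(Y,S)$ is uniquely ergodic, used later in the ``moreover'' clause of \cref{lem:dons-1}, is still recovered because unique ergodicity makes every point generic). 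For the $\lp^1$ truncation the paper applies the pointwise ergodic theorem to $|\phi_k-\phi|$ to get convergence of its averages to $\norm{\phi_k-\phi}{1}$ on a conull set, whereas you dominate the error by the maximal function via the weak $(1,1)$ inequality and Borel--Cantelli; the resulting triangle-inequality estimates are the same, and in both versions the error factors as $\norm{f}{\infty}$ times a quantity depending only on $y$ and $\phi$, which is exactly what makes $Y'$ uniform over all admissible $(X,T)$, $f$ and $x$.
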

One can quickly derive the classical Wiener-Wintner theorem \cite{Wiener_Wintner41} from \cref{thm:pointwise-ET-with-distal-weights}, which we do in \cref{sec:wienerWintner}.

The second type of application that we offer is to the theory of multiple recurrence.
It is somewhat surprising that only Kronecker disjointness is needed for the following theorems, even though multicorrelations are typically governed by nilrotations (cf.\ \cite{Host_Kra05,MR2257397,Leibman10}), which in general are of a higher complexity than rotations on compact abelian groups.
\change{However, as nilsystems are distal, \cref{thm:bergWeylDisjoint} allows us to deduce disjointness of nilsystems from disjointness of their Kronecker factors.}

\begin{theorem}
\label{thm:product-system-orthogonality}
Let $\mathbf{X}=(X,T,\mu_X)$ and $\mathbf{Y}=(Y,S,\mu_Y)$ be ergodic $\Z$ systems and assume $\mathbf{X}$ and $\mathbf{Y}$ are Kronecker disjoint.
Then for every $k,\ell\in\N$, any $f_1,\dots,f_k\in \lp^\infty(X,\mu_X)$ and any $g_1,\dots,g_\ell\in \lp^\infty(Y,\mu_Y)$ we have
\[
\lim_{N\to\infty}
\frac{1}{N} \sum_{n=1}^N \prod_{i=1}^k \prod_{j=1}^\ell T^{in}f_i \; S^{jn}g_j=\left(\lim_{N\to\infty}\frac1N\sum_{n=1}^N\prod_{i=1}^k T^{in}f_i\right) \left( \lim_{N\to\infty}\frac1N\sum_{n=1}^N\prod_{j=1}^\ell S^{jn}g_j\right)
\]
in $\lp^2(X\times Y,\mu_X \otimes \mu_Y)$.
\end{theorem}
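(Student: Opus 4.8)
The plan is to use Host--Kra structure theory to replace $\mathbf{X}$ and $\mathbf{Y}$ by measurably distal factors, then feed Kronecker disjointness into \cref{thm:main-result} to obtain genuine disjointness of those factors, after which the joint average factorizes. Write $u_n=\prod_{i=1}^k T^{in}f_i\in\lp^\infty(X)$ and $v_n=\prod_{j=1}^\ell S^{jn}g_j\in\lp^\infty(Y)$, and set $p=\max(k,\ell)$. Collecting equal powers, the left-hand average is the multiple ergodic average
\[
\frac1N\sum_{n=1}^N u_n\otimes v_n=\frac1N\sum_{n=1}^N\prod_{m=1}^p R^{mn}H_m
\]
in the product system $\mathbf{W}=\mathbf{X}\times\mathbf{Y}$ with $R=T\times S$, where $H_m$ equals $f_m\otimes 1$, $1\otimes g_m$, or $f_m\otimes g_m$ according to whether $m$ exceeds $\ell$, exceeds $k$, or neither. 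By the Host--Kra convergence theorem all three averages converge in $\lp^2$; write $\bar u,\bar v$ for the $\lp^2$-limits of $\tfrac1N\sum_n u_n$ and $\tfrac1N\sum_n v_n$. It thus remains to identify the limit of the joint average with $\bar u\otimes\bar v$.

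First I would reduce to distal systems. The degree-$(p-1)$ Host--Kra factor $\mathsf{Z}_{p-1}(\mathbf{W})$ is characteristic for the joint average, and it is a factor of $\mathsf{Z}_{p-1}(\mathbf{X})\times\mathsf{Z}_{p-1}(\mathbf{Y})$; consequently each $H_m$ may be replaced by the tensor of the conditional expectations of its two tensor factors onto $\mathsf{Z}_{p-1}(\mathbf{X})$ and $\mathsf{Z}_{p-1}(\mathbf{Y})$ without altering any of the three limits. Since $\mathsf{Z}_{p-1}(\mathbf{X})$ and $\mathsf{Z}_{p-1}(\mathbf{Y})$ are inverse limits of nilsystems they are measurably distal, and because $\kron\mathsf{Z}_{p-1}(\mathbf{X})=\kron\mathbf{X}$ and $\kron\mathsf{Z}_{p-1}(\mathbf{Y})=\kron\mathbf{Y}$ the hypothesis of Kronecker disjointness is inherited. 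I may therefore assume outright that $\mathbf{X}$ and $\mathbf{Y}$ are measurably distal and Kronecker disjoint.

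Under these assumptions \cref{thm:main-result} applies, so $\mathbf{X}$ is quasi-disjoint from $\mathbf{Y}$; since $\mathbf{X}$ is distal and the systems are Kronecker disjoint, \cref{1st_cor_of_main_result} upgrades this to genuine disjointness of $\mathbf{X}$ and $\mathbf{Y}$. Disjointness forces the product measure to be the only joining, so $\mathbf{W}=\mathbf{X}\times\mathbf{Y}$ is an ergodic distal system, and one expects its Host--Kra factors to split as products of those of $\mathbf{X}$ and $\mathbf{Y}$; evaluating the limit of $\tfrac1N\sum_n\prod_m R^{mn}H_m$ through this product structure should then yield precisely $\bar u\otimes\bar v$.

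I expect the main obstacle to be this last step: even after reducing to disjoint distal factors, the coupled average $\tfrac1N\sum_n u_n\otimes v_n$ still runs the two coordinates with the same parameter $n$, so the limit must be shown to factor across the two systems rather than couple them. Concretely this amounts to proving that the $\mathbf{X}$- and $\mathbf{Y}$-orbits equidistribute independently in the relevant product of nilsystems, where any surviving coupling would have to live in a common rotation factor; it is exactly here that Kronecker disjointness, now available in the strong form of full disjointness, must be invoked to annihilate the off-diagonal contribution and collapse the joint limit to $\bar u\otimes\bar v$. A convenient way to package this is to test the joint average against product functions $\phi\otimes\psi$, rewrite the resulting correlation sequences via the Bergelson--Host--Kra description of multicorrelations as nilsequences, and use disjointness of the Kronecker data to show that the averaged product of these nilsequences splits.
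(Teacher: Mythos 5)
Your opening moves broadly match the paper's: both reduce via Host--Kra structure theory to nilsystem (hence distal) factors, both observe that the Kronecker factor is unchanged under this reduction, and both ultimately feed the distal disjointness machinery of \cref{thm:main-result} into the problem. However, there is a genuine gap at exactly the step you flag as ``the main obstacle,'' and it is not closed by anything you have set up. Upgrading quasi-disjointness of the distal factors to full disjointness of $\mathbf{X}$ and $\mathbf{Y}$ via \cref{1st_cor_of_main_result} does not make the coupled average split, because the limit of $\tfrac1N\sum_n\prod_m R^{mn}H_m$ is not governed by joinings of $\mathbf{X}$ with $\mathbf{Y}$: it is governed by the equidistribution of the \emph{diagonal orbit closures} $\Omega(\mathbf{X},x)=\overline{\{(R_X^nx,\dots,R_X^{kn}x)\}}$ and $\Omega(\mathbf{Y},y)$ under the skewed actions $R_X\times R_X^2\times\cdots\times R_X^k$ and $R_Y\times\cdots\times R_Y^{\ell}$. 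To conclude that the joint average factors you need these two auxiliary nilsystems to be disjoint (equivalently, Kronecker disjoint), and they are \emph{not} factors of $\mathbf{X}$ and $\mathbf{Y}$, so neither disjointness nor Kronecker disjointness of $\mathbf{X}$ and $\mathbf{Y}$ transfers to them for free; a priori $\Eig(\Omega(\mathbf{X},x))$ could be strictly larger than $\Eig(\mathbf{X})$ (one would naively worry about new roots of unity or roots of eigenvalues appearing). The missing ingredient is precisely the paper's \cref{thm:3.4second-star}, which shows $\Eig(\Omega(\mathbf{X},x),S_X)=\Eig(\mathbf{X})$ for a.e.\ $x$; this is the reason the theorem needs only Kronecker disjointness despite multicorrelations being controlled by nilrotations, and your sketch contains no substitute for it. Your closing suggestion (``use disjointness of the Kronecker data to show the averaged product of nilsequences splits'') presupposes exactly this unproved eigenvalue identification.

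Two smaller points. First, your reduction replaces $H_m$ by conditional expectations onto $\mathsf{Z}_{p-1}(\mathbf{X})\times\mathsf{Z}_{p-1}(\mathbf{Y})$, which requires knowing that $\mathsf{Z}_{p-1}(\mathbf{X}\times\mathbf{Y})$ sits inside that product; this is true but nontrivial and needs justification (the paper sidesteps it by using the quantitative approximation of \cref{thm:HK-7.3}, which crucially is uniform over bounded weight sequences $(a_n)$ -- that uniformity is what lets one substitute the nilfactor approximation into the coupled average one coordinate at a time). Second, the paper's endgame is a pointwise Wiener--Wintner argument (\cref{lem:dons-1}, applied to the uniquely ergodic systems $\Omega(\mathbf{X},x)$ and $\Omega(\mathbf{Y},y)$ once \cref{thm:3.4second-star} certifies their Kronecker disjointness), not a splitting of Host--Kra factors of the product system; if you want to salvage your route you should aim your disjointness argument at the orbit-closure systems rather than at $\mathbf{X}$ and $\mathbf{Y}$ themselves.
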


\begin{theorem}\label{thm:mrww-1}
Let $(Y,S)$ be a topological $\Z$ system and let $\mu_Y$ be an ergodic $S$ invariant Borel probability measure on $Y$.
For all $G\in \lp^1(Y,\mu_Y)$ there exists a set $Y'\subset Y$ with $\mu_Y(Y')=1$ such that for any ergodic $\Z$ system $(X,T,\mu_X)$ which is Kronecker disjoint from $(Y,S,\mu_Y)$, any $k\in\N$, any $f_1,\dots,f_k\in \lp^\infty(X,\mu_X)$ and any $y\in Y'$:
\[
\lim_{N \to \infty} \frac{1}{N} \sum_{n=1}^N G(S^ny)\prod_{i=1}^k T^{in} f_i
=
\left( \int_Y G \d\mu_Y \right) \cdot \left( \lim_{N\to\infty}\frac{1}{N} \sum_{n=1}^N \prod_{i=1}^k T^{in} f_i \right)
\]
in $\lp^2(X,\mu_X)$.
Moreover, if $(Y,S)$ is uniquely ergodic and $G\in\cont(Y)$ then we can take $Y'=Y$.
\end{theorem}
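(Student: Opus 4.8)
The plan is to reduce the statement to showing that the oscillating part of the weight contributes nothing, and to extract the conull set $Y'$ from a van der Corput argument whose only probabilistic input is the pointwise ergodic theorem on $\mathbf{Y}$. Writing $G_0 := G - \int_Y G \d\mu_Y$ and using that the unweighted averages $\frac1N\sum_{n=1}^N\prod_{i=1}^k T^{in}f_i$ converge in $\lp^2(X,\mu_X)$ (Host--Kra--Ziegler), it suffices to prove that for $y$ in a suitable conull set $Y'$ (depending only on $G$ and $\mathbf{Y}$) one has $\frac1N\sum_{n=1}^N G_0(S^n y)\,v_n \to 0$ in $\lp^2(X,\mu_X)$, where $v_n := \prod_{i=1}^k T^{in}f_i$. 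I would first treat bounded $G$ and then pass to $G\in\lp^1$ by truncation, controlling the error $\frac1N\sum_n |G-G_M|(S^n y)\,\|v_n\|$ by $\prod_i\|f_i\|_\infty \cdot \|G-G_M\|_{\lp^1}$ through Birkhoff's theorem on $\mathbf{Y}$; since this bound is uniform in $\mathbf{X}$ and the $f_i$, the conull set is unaffected.

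The heart of the argument is a van der Corput estimate for $u_n := G_0(S^n y)\,v_n$ in the Hilbert space $\lp^2(X,\mu_X)$, bounding $\limsup_N \|\frac1N\sum_n u_n\|^2$ by $\limsup_H |\frac1H\sum_{h=1}^H \Gamma_h(y)|$, where $\Gamma_h(y)=\lim_N\frac1N\sum_n G_0(S^{n+h}y)\overline{G_0(S^n y)}\,\langle v_{n+h},v_n\rangle$. The weight appearing here is $H_h(S^n y)$ with $H_h:=(S^h G_0)\overline{G_0}$, while $\langle v_{n+h},v_n\rangle = \int_X \prod_i T^{in}\big((T^{ih}f_i)\overline{f_i}\big)\d\mu_X$ becomes, after translating by $T^{-kn}$, a $(k-1)$-fold correlation in $(X,T^{-1})$. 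I would run the proof as an induction on $k$: since $(X,T^{-1})$ is again ergodic and Kronecker disjoint from $\mathbf{Y}$, the inductive hypothesis applied with the bounded weight $H_h$ shows that this weighted $(k-1)$-fold average decouples, so that pairing against the fixed function gives $\Gamma_h(y)=\langle S^h G_0,G_0\rangle\cdot D_h$ for every $y$ in a conull set $Y'_h$, where $D_h:=\lim_N\frac1N\sum_n\langle v_{n+h},v_n\rangle$ is independent of $y$. Intersecting the countably many $Y'_h$ (each depending only on $H_h$, hence on $G$ and $\mathbf{Y}$) produces $Y'$; for the base case $k=1$ the identity is immediate from $\langle T^{n+h}f_1,T^n f_1\rangle=\langle T^h f_1,f_1\rangle$ together with Birkhoff's theorem for $H_h$.

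It then remains to show $\frac1H\sum_{h=1}^H \langle S^h G_0,G_0\rangle D_h\to 0$. The sequence $(D_h)$ is positive definite, since $\sum_{h,h'}c_h\overline{c_{h'}}D_{h-h'}=\lim_N\frac1N\sum_n\|\sum_h c_h v_{n+h}\|^2\ge 0$, so Herglotz's theorem gives $D_h=\int_{\T}e^{2\pi i h t}\d\rho(t)$ for a finite positive measure $\rho$; likewise $\langle S^h G_0,G_0\rangle=\int_\T e^{2\pi i h\theta}\d\sigma_{G_0}(\theta)$ for the spectral measure $\sigma_{G_0}$ of $G_0$ under $S$. Averaging the product over $h$ and letting $H\to\infty$ yields $\int_\T \rho(\{-\theta\})\d\sigma_{G_0}(\theta)$, which is carried by the common atoms of $\sigma_{G_0}$ and the reflection of $\rho$. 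Now $\sigma_{G_0}$ has no atom at $0$ (as $\int G_0=0$) and its atoms are eigenvalues of $S$, so the expression vanishes unless $\mathbf{X}$ and $\mathbf{Y}$ share a nontrivial eigenvalue, which Kronecker disjointness forbids — provided one knows that the atoms of $\rho$ are eigenvalues of $T$.

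This last point is the crux and, I expect, the main obstacle: $\rho(\{t\})=\|\hat v(t)\|^2$ with $\hat v(t)=\lim_N\frac1N\sum_n e^{-2\pi i n t}\prod_i T^{in}f_i$, and for $k\ge 3$ the relevant characteristic factor is a pro-nilsystem rather than the Kronecker factor, so $\hat v(t)$ must be analyzed through the structure theory of multicorrelation sequences. I would invoke the Host--Kra--Leibman theorem that each correlation sequence $n\mapsto\int_X w\,\prod_i T^{in}f_i\d\mu_X$ is a nilsequence up to a null sequence, together with the fact that a nilsequence correlates with a linear phase $e^{2\pi i n t}$ only when $e^{2\pi i t}$ is an eigenvalue of the underlying nilrotation, and that such eigenvalues are exactly the eigenvalues of $T$. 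This forces $\hat v(t)=0$ unless $e^{2\pi i t}$ is an eigenvalue of $T$, confines the atoms of $\rho$ to the eigenvalues of $T$, and explains why only Kronecker, rather than nilpotent, disjointness is needed. Finally, the ``moreover'' statement follows by replacing each appeal to Birkhoff's theorem for the continuous weights $H_h$ by the uniform convergence guaranteed by unique ergodicity, which propagates through the induction to give $Y'=Y$.
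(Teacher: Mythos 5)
Your strategy is genuinely different from the paper's: you extract $Y'$ from Birkhoff sets for the countably many weights $H_h=(S^hG_0)\overline{G_0}$ and run a van der Corput induction on $k$, whereas the paper first replaces $\prod_i T^{in}f_i$ by $\prod_i R^{in}\tilde f_i$ on a nilsystem factor via \cref{thm:HK-7.3}, rewrites the resulting average as $F(S^n(x,\dots,x))$ on the diagonal orbit closure $\Omega(\mathbf{X},x)$, and then feeds that orbit closure into the distal Wiener--Wintner theorem (\cref{lem:dons-1}, hence \cref{thm:main-result}); notably, your route would bypass the quasi-disjointness machinery entirely. Much of your outline is sound: the reduction to bounded, mean-zero $G_0$; the truncation step; the van der Corput bound; the inductive identification $\Gamma_h(y)=\langle S^hG_0,G_0\rangle D_h$ on a conull set depending only on $G$ and $\mathbf{Y}$; and the Herglotz/Wiener computation reducing everything to the common atoms of $\sigma_{G_0}$ and the reflection of $\rho$, with $\sigma_{G_0}(\{0\})=0$ and the remaining atoms of $\sigma_{G_0}$ lying in $\Eig(\mathbf{Y})$.

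The gap is exactly where you flagged it, and the references you invoke do not close it. You need that every atom of $\rho$ sits at an eigenvalue of $T$, equivalently that your $\hat v(t)$ vanishes whenever $e^{2\pi i t}\notin\Eig(\mathbf{X})$. The Bergelson--Host--Kra theorem does represent each correlation sequence $n\mapsto\int_X \overline{w}\,\prod_iT^{in}f_i\d\mu_X$ as a nilsequence plus a small error, but the nilsequence it produces is of the form $F(S^n(x,\dots,x))$ with $S=R\times R^2\times\cdots\times R^k$ acting on a power of a nilsystem factor of $\mathbf{X}$; it lives on the orbit closure of a diagonal point inside a \emph{non-ergodic} ambient nilsystem. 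Your assertion that the eigenvalues of this ``underlying nilrotation'' are ``exactly the eigenvalues of $T$'' is therefore not a formal consequence of the nilsequence representation: a priori the point spectrum of $(\Omega(\mathbf{X},x),S)$ could be strictly larger than $\Eig(\mathbf{X})$. (For $k=2$ one can check it by hand, since $\rho$ is then a convolution of spectral measures of $f_1$ and $f_2$ and $\Eig(\mathbf{X})$ is a group; for $k\ge 3$ no such elementary argument is available.) The statement $\Eig(\Omega(\mathbf{X},x),S,\mu_{\Omega(\mathbf{X},x)})=\Eig(\mathbf{X})$ for almost every $x$ is precisely \cref{thm:3.4second-star}, extending \cite[Theorem~7.1]{Moreira_Richter18}, and it is the main technical input of the paper's own proof, established there through the structure of nilsystems (total ergodicity of connected components of the orbit closure, etc.). So your argument relocates, but does not avoid, the hard step; as written, the crux is asserted rather than proved.
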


\paragraph{Structure of the paper:}
In \cref{sec:prelims} we review basic results and facts regarding Kronecker factors and distal systems, which are needed in the subsequent sections.
In \cref{sec:samequasidisjoint} we discuss Berg's notion of quasi-disjointness for $\Z$ systems in more detail and give a proof of \cref{thm_samequasidisjoint}.
In \cref{sec:distalSystems} we provide a proof of \cref{thm:main-result} by showing that quasi-disjointness lifts through group-extensions, is preserved by passing to factors and is preserved under taking inverse limits.
Sections \ref{sec:wienerWintner} and \ref{sec:multicorrelations} contain numerous applications of our main results to questions about pointwise convergence in ergodic theory and to the theory of multiple recurrence, including proofs of Theorems \ref{thm:product-system-orthogonality} and \ref{thm:mrww-1}.
Finally, in \cref{sec:open-Q} we formulate some natural open questions.

\paragraph{Acknowledgements:}
We would like to thank Vitaly Bergelson for helpful comments on an early version of the paper, Eli Glasner for fruitful discussions on disjointness of measurably distal systems, and Pavel Zorin-Kranich for useful remarks on Wiener--Wintner results.
\change{We would like to thank the referee for suggesting improvements to the readability of the paper.}
We would also like to thank the referee of an earlier version of this paper for pointing out errors in the statements of Theorems~\ref{thm:3.4second} and \ref{thm:3.4second-star} in light of work by Liang and Qiu, which have since been corrected.
The first author was supported by the NSF grant DMS-1700147. The third author gratefully acknowledges the support of the NSF via grants DMS-1246989 and DMS-1703597.

\section{Preliminaries}
\label{sec:prelims}

In this section we present various preliminary results, which will be of use throughout the paper, on $G$ systems, their Kronecker factors and their joinings.
We conclude with a brief discussion of topologically and measurably distal systems.
Throughout this paper $G$ denotes a countable group.

\subsection{Measure preserving systems}
\label{sec:mpa}
By a \define{topological $G$ system} we mean a pair $(X,T)$ where $X$ is a compact metric space and $T$ is a continuous left action of $G$ on $X$.
A \define{$G$ system} is a tuple $\mathbf{X} = (X,T,\mu_X)$ where $(X,T)$ is a topological $G$ system and $\mu_X$ is a $T$ invariant Borel probability measure on $X$.
The \define{product} of two $G$ systems $\mathbf{X} = (X,T,\mu_X)$ and $\mathbf{Z} = (Z,R,\mu_Z)$ is the system $\mathbf{X} \times \mathbf{Z} = (X \times Z, T \times R, \mu_X \otimes \mu_Z)$ where $T \times R$ is the diagonal action $(T \times R)^g = T^g \times R^g$.

A $G$ system $\mathbf{Z} = (Z,R,\mu_Z)$ is a \define{factor} of a $G$ system $\mathbf{X} = (X,T,\mu_X)$ if there is a $G$ invariant, conull (i.e. full measure) subset $X'$ of $X$ and a measurable, measure-preserving, $G$ equivariant map $X' \to Z$.
Any such map, together with its conull, invariant domain, is called a \define{factor map}.

Given a factor $\mathbf{Z}$ of a $G$ system $\mathbf{X}$ the associated factor map induces an isometric embedding of $\lp^2(Z,\mu_Z)$ in $\lp^2(X,\mu_X)$.
Denote by $\condex{\cdot}{\mathbf{Z}}$ the orthogonal projection from $\lp^2(X,\mu_X)$ to this embedded copy of $\lp^2(Z,\mu_Z)$.
Given $f \in \lp^2(X,\mu_X)$, we can think of $\condex{f}{\mathbf{Z}}$ as a function either on $X$ or on $Z$.

\subsection{Disintegrations}
\label{sec:disintegrations}

Given a factor map $\pi : \mathbf{X} \to \mathbf{Y}$ of $G$ systems one can always find an almost-surely defined, measurable family $y \mapsto \mu_y$ of Borel probability measures on $X$ such that
\[
\int f \intd \mu_X = \iint f \intd \mu_y \intd \mu_Y(y)
\]
for all $f$ in $\lp^1(X,\mu_X)$ and that $T^g \mu_y = \mu_{S^g y}$ for all $g \in G$ almost surely.
Moreover, the family $y \mapsto \mu_y$ is uniquely determined almost surely by these properties.
If $\mathbf{Y}$ is the factor corresponding (via \cite[Corollary~2.2]{Zimmer76a}) to the $\sigma$-algebra of $T$ invariant sets then the resulting disintegration $y \mapsto \mu_y$ is a version of the ergodic decomposition of $\mu_X$.
We refer the reader to \cite[Chapter~5]{MR2723325} for details on disintegrations of measures.

\subsection{The Kronecker factor}
\label{subsec:kroneckerFactor}

Every $G$ system $\mathbf{X}= (X,T,\mu_X)$ induces a right action of $G$ on $\lp^2(X,\mu_X)$ defined by $(T^g f)(x) = f(T^g x)$.
A function $f\in \lp^2(X,\mu_X)$ is \define{almost periodic} if its orbit $\{T^gf:g\in G\}$ has compact closure in the strong topology of $\lp^2(X,\mu_X)$.
We write $\ap(\mathbf{X})$ for the closed subspace of $\lp^2(X,\mu_X)$ spanned by almost periodic functions.
A $G$ system $\mathbf{X}$ is \define{almost periodic} if $\lp^2(X,\mu_X) = \ap(\mathbf{X})$.
It follows from \cite[Lemma~4.3]{Leeuw_Glicksberg61} that $\ap(\mathbf{X}) $ coincides with  the subspace of $\lp^2(X,\mu_X)$ spanned by finite-dimensional, $T$ invariant subspaces of $\lp^2(X,\mu_X)$.
There is a $G$ invariant, countably generated sub-$\sigma$-algebra $\mathscr{A}$ of the Borel $\sigma$-algebra of $X$ such that $\ap(\mathbf{X}) = \lp^2(X,\mathscr{A},\mu_X)$ (cf.\ \cite[Lemma 3.1]{MR1191743}).
By \cite[Corollary~2.2]{Zimmer76a} the $\sigma$-algebra $\mathscr{A}$ corresponds to a factor $\kron \mathbf{X}$ of $\mathbf{X}$ called the \define{Kronecker factor}.

\begin{proposition}
\label{prop:K-factor-of-product}
For any two $G$ systems $\mathbf{X}$ and $\mathbf{Z}$ we have $\ap( \mathbf{X} \times \mathbf{Z} ) = \ap(\mathbf{X}) \otimes \ap(\mathbf{Z})$.
\end{proposition}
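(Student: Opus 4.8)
The plan is to work at the level of Hilbert spaces, identifying $\lp^2(X\times Z,\mu_X\otimes\mu_Z)$ with the tensor product $\lp^2(X,\mu_X)\otimes\lp^2(Z,\mu_Z)$, under which the operators induced by the diagonal action become $T^g\otimes R^g$. The tool I would rely on is the de Leeuw--Glicksberg decomposition (the framework of \cite{Leeuw_Glicksberg61} already invoked above): $\ap(\mathbf{X})$ is the space of vectors with norm-precompact orbit, and its orthogonal complement $\ap(\mathbf{X})^\perp$ is the \emph{flight} subspace, meaning every $v\in\ap(\mathbf{X})^\perp$ admits a net $(g_\alpha)$ in $G$ with $T^{g_\alpha}v\to 0$ weakly. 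I would then prove the two inclusions separately.

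For $\ap(\mathbf{X})\otimes\ap(\mathbf{Z})\subseteq\ap(\mathbf{X}\times\mathbf{Z})$, I take almost periodic $f\in\ap(\mathbf{X})$ and $h\in\ap(\mathbf{Z})$, so that the orbit closures $\overline{\{T^gf:g\in G\}}$ and $\overline{\{R^gh:g\in G\}}$ are norm-compact. Since the tensor map $(u,v)\mapsto u\otimes v$ is jointly norm-continuous (indeed $\|u\otimes v\|=\|u\|\,\|v\|$), it carries the compact product of these two orbit closures onto a compact set containing $\{(T^g\otimes R^g)(f\otimes h):g\in G\}$. Hence $f\otimes h$ is almost periodic for the product system, and as $\ap(\mathbf{X}\times\mathbf{Z})$ is a closed subspace containing all such elementary tensors it contains their closed linear span $\ap(\mathbf{X})\otimes\ap(\mathbf{Z})$.

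The reverse inclusion I would obtain by an orthogonality argument, which I expect to be the crux. The claim is that if $v\in\ap(\mathbf{X})^\perp$ then $v\otimes w\perp\ap(\mathbf{X}\times\mathbf{Z})$ for every $w\in\lp^2(Z,\mu_Z)$. Fix an almost periodic $F\in\ap(\mathbf{X}\times\mathbf{Z})$ and a net $(g_\alpha)$ with $T^{g_\alpha}v\to 0$ weakly. By unitarity $\langle v\otimes w,F\rangle=\langle (T^{g_\alpha}\otimes R^{g_\alpha})(v\otimes w),(T^{g_\alpha}\otimes R^{g_\alpha})F\rangle$ for every $\alpha$. Passing to a subnet, precompactness of the orbit of $F$ gives $(T^{g_\alpha}\otimes R^{g_\alpha})F\to F'$ in norm, while $T^{g_\alpha}v\otimes R^{g_\alpha}w\to 0$ weakly: testing against an elementary tensor $a\otimes b$, the factor $\langle T^{g_\alpha}v,a\rangle\to 0$ while $|\langle R^{g_\alpha}w,b\rangle|\le\|w\|\,\|b\|$ stays bounded, and the whole net has constant norm $\|v\|\,\|w\|$, so weak nullity on the dense span of elementary tensors upgrades to weak nullity. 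Pairing a weakly null net with a norm-convergent one forces the right-hand side to tend to $0$, whence $\langle v\otimes w,F\rangle=0$.

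By the symmetric statement (roles of $\mathbf{X}$ and $\mathbf{Z}$ reversed), both $\ap(\mathbf{X})^\perp\otimes\lp^2(Z,\mu_Z)$ and $\lp^2(X,\mu_X)\otimes\ap(\mathbf{Z})^\perp$ are orthogonal to $\ap(\mathbf{X}\times\mathbf{Z})$. Writing $P$ and $Q$ for the orthogonal projections onto $\ap(\mathbf{X})$ and $\ap(\mathbf{Z})$, the identity $I-P\otimes Q=(I-P)\otimes Q+P\otimes(I-Q)+(I-P)\otimes(I-Q)$ shows the closed span of these two subspaces is exactly $(\ap(\mathbf{X})\otimes\ap(\mathbf{Z}))^\perp$, so $\ap(\mathbf{X}\times\mathbf{Z})\subseteq\ap(\mathbf{X})\otimes\ap(\mathbf{Z})$; with the first inclusion this gives equality. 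The one point demanding real care is the weak-nullity claim together with the identification of $\ap(\mathbf{X})^\perp$ as the flight subspace, which is where the de Leeuw--Glicksberg theory does the work; everything else is bookkeeping with projections.
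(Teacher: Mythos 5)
Your proof is correct and follows essentially the same route as the paper's: both rest on the de Leeuw--Glicksberg splitting $\lp^2 = \ap \oplus \wm$, the identification of $\lp^2(X\times Z,\mu_X\otimes\mu_Z)$ with the Hilbert-space tensor product, and the observation that an elementary tensor with one flight-vector factor is orthogonal to $\ap(\mathbf{X}\times\mathbf{Z})$. The only cosmetic difference is that you verify this orthogonality directly against a fixed almost periodic $F$ via the weak-null/norm-convergent pairing, whereas the paper phrases the same computation as membership of $f\otimes g$ in $\wm(\mathbf{X}\times\mathbf{Z})$ and then invokes $\wm = \ap^{\perp}$ for the product system.
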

\begin{proof}
Given a $G$ system $\mathbf{Y}$ denote by $\wm(\mathbf{Y})$ the closure in $\lp^2(Y,\mu_Y)$ of the collection of vectors $f$ with the property that $0$ belongs to the weak closure of $\{ T^g f : g \in G \}$.
By \cite[Corollary 4.12]{Leeuw_Glicksberg61} one can write $\lp^2(Y,\mu_Y)$ as the direct sum $\ap(\mathbf{Y}) \oplus \wm(\mathbf{Y})$.

Fix now $G$ systems $\mathbf{X} = (X,T,\mu_X)$ and $\mathbf{Z} = (Z,R,\mu_Z)$.
We have
\[
\lp^2(X \times Z, \mu_X \otimes \mu_Z))
=
(\ap(\mathbf{X}) \otimes \ap(\mathbf{Z}))
\oplus
(\wm(\mathbf{X}) \otimes \ap(\mathbf{Z}))
\oplus
(\ap(\mathbf{X}) \otimes \wm(\mathbf{Z}))
\oplus
(\wm(\mathbf{X}) \otimes \wm(\mathbf{Z}))
\]
holds.
Certainly $\ap(\mathbf{X}) \otimes \ap(\mathbf{Z}) \subset \ap(\mathbf{X} \times \mathbf{Z})$.
If $f$ belongs to $\wm(\mathbf{X})$ then $f \otimes g$ belongs to $\wm(\mathbf{X} \times \mathbf{Z})$ for every $g$ in $\lp^2(Z,\mu_Z)$.
Similarly, if $g$ belongs to $\wm(\mathbf{Z})$ then $f \otimes g$ belongs to $\wm(\mathbf{X} \times \mathbf{Z})$ for every $f$ in $\lp^2(Z,\mu_Z)$.
This gives
\[
(\wm(\mathbf{X}) \otimes \ap(\mathbf{Z}))
\oplus
(\ap(\mathbf{X}) \otimes \wm(\mathbf{Z}))
\oplus
(\wm(\mathbf{X}) \otimes \wm(\mathbf{Z}))
\subset
\wm(\mathbf{X} \otimes \mathbf{Z})
\]
and the result follows.
\end{proof}

When $\mathbf{X}$ is ergodic \cite[Theorem~1]{Mackey64} allows us to model the system $\kron \mathbf{X}$ as a homogeneous space $K/H$ where $K$ is a compact group and $H$ is a closed subgroup with the action of $G$ on $K/H$ given by a homomorphism $G \to K$ with dense image.
Write $\kron X$ for the underlying space of any such modeling of $\kron \mathbf{X}$.
For ergodic $\Z$ systems the Kronecker factor can be explicitly described by the system's discrete spectrum.

\begin{definition}
\label{def:discrete-spectrum}
Let $\mathbf{X}=(X,T,\mu_X)$ be a $\Z$ system. The \define{discrete spectrum} of $\mathbf{X}$, denoted by $\Eig(\mathbf{X})$, is defined to be the set of all eigenvalues of $T$ when viewed as a unitary operator $T:\lp^2(X,\mu_X) \to\lp^2(X,\mu_X)$,
$$
\Eig(\mathbf{X})=\{\zeta\in\C: \exists f\in\lp^2(X,\mu_X)~\text{with}~f\neq 0~\text{and}~ Tf=\zeta f\}.
$$
\end{definition}

Given two ergodic $G$ systems $\mathbf{X}$ and $\mathbf{Z}$ we can (using the Bohr compactification of $G$, for instance) assume that $\kron \mathbf{X}$ and $\kron \mathbf{Z}$ are modeled by homogeneous spaces of the same compact topological group $K$.
That is, we may assume $\kron X = K/H_\mathbf{X}$ and $\kron Z = K/H_\mathbf{Z}$ for some compact topological group $K$ and closed subgroups $H_\mathbf{X}$, $H_\mathbf{Z}$ thereof, the action of $G$ on both spaces determined by a homomorphism $G \to K$ with dense image.
The system $K/ \langle H_\mathbf{X},H_\mathbf{Z} \rangle$ is a factor of both $\mathbf{X}$ and $\mathbf{Z}$ and serves as a model for $\kron(\mathbf{X},\mathbf{Z})$ -- their \define{joint Kronecker factor} by which we mean the largest almost periodic factor of both $\mathbf{X}$ and $\mathbf{Z}$.

\begin{proposition}[cf.\ {\cite[Lemma~4.18]{Furstenberg81}}]
\label{prop:invariantVectors}
If $\mathbf{X}$ and $\mathbf{Z}$ are ergodic $G$ systems and $\mathbf{X} \times \mathbf{Z}$ is not ergodic then $\lp^2(X,\mu_X)$ and $\lp^2(Z,\mu_Z)$ contain isomorphic, finite-dimensional, invariant subspaces of non-constant functions.
\end{proposition}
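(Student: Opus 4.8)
The plan is to extract the two subspaces from a single non-constant invariant function on the product system, reducing everything to the almost periodic parts via \cref{prop:K-factor-of-product} and then applying the representation theory of (relatively) compact group actions. Since $\mathbf{X}\times\mathbf{Z}$ fails to be ergodic, there is a non-constant $F\in\lp^2(X\times Z,\mu_X\otimes\mu_Z)$ with $(T\times R)^gF=F$ for all $g\in G$. Such an $F$ has one-point orbit $\{F\}$, which is trivially compact, so $F$ is almost periodic; hence $F\in\ap(\mathbf{X}\times\mathbf{Z})$, and by \cref{prop:K-factor-of-product} this space equals $\ap(\mathbf{X})\otimes\ap(\mathbf{Z})$. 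I may therefore work entirely inside $\ap(\mathbf{X})\otimes\ap(\mathbf{Z})$, where the $G$-action decomposes into finite-dimensional pieces.

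Next I would decompose $\ap(\mathbf{X})$ and $\ap(\mathbf{Z})$ as orthogonal Hilbert-space direct sums $\bigoplus_i V_i$ and $\bigoplus_j W_j$ of finite-dimensional, irreducible, $G$-invariant subspaces; this is possible because each space is, by definition, spanned by finite-dimensional invariant subspaces, on each of which $G$ acts unitarily through a relatively compact subgroup of the unitary group, so Peter--Weyl-type complete reducibility applies. Writing $V_0=\C\,1_X$ and $W_0=\C\,1_Z$ for the trivial (constant) summands, the orthogonal projection of $F$ onto the $G$-invariant subspace $V_i\otimes W_j$ is a $G$-equivariant operation, so each component $F_{ij}$ again lies in $(V_i\otimes W_j)^G$. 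The constant functions form exactly the component $V_0\otimes W_0$, so since $F$ is non-constant there is a pair $(i,j)$ with $V_i\otimes W_j\neq V_0\otimes W_0$ and $F_{ij}\neq0$; in particular $(V_i\otimes W_j)^G\neq0$ for this pair.

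Finally I would invoke Schur's lemma in the form $(V_i\otimes W_j)^G\cong\mathrm{Hom}_G(V_i^{*},W_j)$, which is nonzero precisely when $W_j$ is isomorphic to the contragredient $V_i^{*}$. Because $\mathbf{X}$ and $\mathbf{Z}$ are ergodic, the trivial representation occurs in $\ap(\mathbf{X})$ and $\ap(\mathbf{Z})$ only as the constants, so it appears with multiplicity one; hence if $V_i$ were trivial it would equal $V_0$, forcing $W_j\cong V_0^{*}$ to be $W_0$ and returning us to the excluded pair $V_0\otimes W_0$. Thus both $V_i$ and $W_j$ are non-trivial. Since $\lp^2(X,\mu_X)$ is closed under complex conjugation, the conjugate space $\overline{V_i}$ is again a finite-dimensional invariant subspace of $\lp^2(X,\mu_X)$, and for unitary representations $\overline{V_i}\cong V_i^{*}\cong W_j$. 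Therefore $\overline{V_i}\subset\lp^2(X,\mu_X)$ and $W_j\subset\lp^2(Z,\mu_Z)$ are isomorphic finite-dimensional invariant subspaces; being irreducible and non-trivial they intersect the constants only in $0$, so they consist of non-constant functions, as required.

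The main obstacle I expect is the bookkeeping around the contragredient: Schur's lemma naturally yields $W_j\cong V_i^{*}$ rather than $W_j\cong V_i$, so one must pass to the conjugate representation $\overline{V_i}$ (using that $\lp^2$ is conjugation-invariant and that for unitary representations the conjugate and contragredient coincide) in order to produce genuinely isomorphic subspaces sitting inside $\lp^2(X,\mu_X)$ and $\lp^2(Z,\mu_Z)$. A secondary point requiring care is justifying the decomposition into finite-dimensional irreducibles for the countable discrete group $G$, which rests precisely on the fact that on almost periodic vectors $G$ acts through a compact group.
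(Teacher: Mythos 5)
Your proposal is correct and follows essentially the same route as the paper: extract a non-constant invariant function, place it in $\ap(\mathbf{X})\otimes\ap(\mathbf{Z})$ via \cref{prop:K-factor-of-product}, decompose into finite-dimensional irreducibles, and observe that invariance forces a pairing of each subrepresentation of $\lp^2(X,\mu_X)$ with its contragredient inside $\lp^2(Z,\mu_Z)$. You are in fact slightly more careful than the paper at the final step, where you pass from $V_i^{*}$ to the conjugate subspace $\overline{V_i}\subset\lp^2(X,\mu_X)$ to exhibit genuinely isomorphic invariant subspaces in the two function spaces --- a point the paper's proof leaves implicit.
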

\begin{proof}
If $\mathbf{X} \times \mathbf{Z}$ is not ergodic then there is a non-constant function $f$ in $\lp^2(X \times Z, \mu_X \otimes \mu_Z)$ that is $T \times R$ invariant.
Therefore $\bilin{f}{\phi} = \bilin{f}{(T \times R)^g \phi}$ for all $\phi\in\lp^2(X\times Z,\mu_X\otimes\mu_Z)$ and $g \in G$.
In particular, if $\phi \in \wm(\mathbf{X} \times \mathbf{Z})$ then $\bilin{f}{\phi} = 0$.
It follows that $f$ belongs to $\ap(\mathbf{X} \times \mathbf{Z})$ and therefore (by \cref{prop:K-factor-of-product}) to $\ap(\mathbf{X}) \otimes \ap(\mathbf{Z})$.
Now $\ap(\mathbf{X})$ and $\ap(\mathbf{Z})$ are spanned by finite-dimensional, invariant subspaces of $\lp^2(X,\mu_X)$ and $\lp^2(Z,\mu_Z)$ respectively so
\[
f = \sum_{\iota,\kappa} g_\iota \otimes h_\kappa
\]
where $\iota$ and $\kappa$ enumerate the finite dimensional subrepresentations of $\lp^2(X,\mu_X)$ and $\lp^2(Z,\mu_Z)$ respectively and $g_\iota \otimes h_\kappa$ is the projection of $f$ on the corresponding subrepresentation of $\lp^2(X \times Z, \mu_X \otimes \mu_Z)$.
The fact that $f$ is invariant implies only terms of the form $g_\iota \otimes h_{\iota^*}$ contribute to the sum, where $\iota^*$ denotes the contragradient of the representation $\iota$.
Since $f$ is non-constant there must be a non-trivial, finite-dimensional representation of $G$ that appears as a sub-representation of both $\lp^2(X,\mu_X)$ and $\lp^2(Z,\mu_Z)$.
\end{proof}

\subsection{Joinings}

Given $G$ systems $\mathbf{X} = (X,T,\mu_X)$ and $\mathbf{Z} = (Z,R,\mu_Z)$, a measure $\lambda$ on $X \times Z$ is a \define{joining} of $\mathbf{X}$ and $\mathbf{Z}$ if $\lambda$ is invariant under the diagonal action $T \times R$ and the pushforwards of $\lambda$ under the two coordinate projection maps $\pi_{\mathbf{X}}:X\times Z\to X$ and $\pi_{\mathbf{Z}} : X\times Z\to Z$ satisfy $\pi_{\mathbf{X}}(\lambda) = \mu_X$ and $\pi_\mathbf{Z}(\lambda)=\mu_Z$.
We  write $\joinings(\mathbf{X},\mathbf{Z})$ for the set of all joinings of $\mathbf{X}$ with $\mathbf{Z}$ and $\ergjoinings(\mathbf{X},\mathbf{Z})$ for the set of joinings of $\mathbf{X}$ with $\mathbf{Z}$ that are ergodic.
The product $\mu_X \otimes \mu_Z$ is always a joining of $\mathbf{X}$ and $\mathbf{Z}$.
When $\mathbf{X}$ and $\mathbf{Z}$ are ergodic the set $\ergjoinings(\mathbf{X},\mathbf{Z})$ is always non-empty because the measures in the ergodic decomposition of $\mu_X \otimes \mu_Z$ can be shown to be ergodic joinings of $\mathbf{X}$ and $\mathbf{Z}$.

One says that $G$ systems $\mathbf{X}$ and $\mathbf{Z}$ are \define{disjoint} if $\mu_X \otimes \mu_Z$ is their only joining.
We say that $G$ systems $\mathbf{X}$ and $\mathbf{Z}$ are \define{Kronecker disjoint} if their Kronecker factors $\kron \mathbf{X}$ and $\kron \mathbf{Z}$ are disjoint.

\subsection{Distal systems}
\label{subsec:prelim-distal}

Given a $G$ system $\mathbf{X} = (X,T,\mu_X)$ denote by $\Aut(\mathbf{X})$ the group of invertible, measurable, measure-preserving maps on $(X,\mu_X)$ that commute with $T$, where two automorphisms are identified if they coincide $\mu_X$ almost everywhere.
Since $X$ is a compact metric space, the group $\Aut(\mathbf{X})$ is metrizable and as such becomes a Polish topological group.
Given a compact subgroup $L$ of $\Aut(\mathbf{X})$, the associated sub-$\sigma$-algebra of $L$ invariant sets determines a factor of $\mathbf{X}$.
One says that a $G$ system $\mathbf{X}$ is a \define{group extension} of a $G$ system $\mathbf{Y}$ if $\mathbf{Y}$ is (isomorphic to) a factor of $\mathbf{X}$ via a compact subgroup of $\Aut(\mathbf{X})$ in the above fashion.
As stated in the introduction, a $G$ system is \define{measurably distal} if it belongs to the smallest class of $G$ systems that contains the trivial one-point system and is closed under group extensions, factor maps and inverse limits.

We recall that a topological $G$ system $(X,T)$ is \define{topologically distal} if
\[
\inf \{ \mathsf{d}(T^g x, T^g x') : g \in G \} > 0
\]
for all $x \ne x'$ in $X$.
In the case $G = \mathbb{Z}$ Parry~\cite{Parry68} modified this definition to apply to measure preserving actions.
We now recall Zimmer's generalization~\cite[Definition~8.5]{Zimmer76} of Parry's definition to actions of countable groups.
Given a $G$ system $\mathbf{X} = (X,T,\mu_X)$ a sequence $n \mapsto A_n$ of Borel subsets of $X$ with $\mu_X(A_n) > 0$ and $\mu_X(A_n) \to 0$ is called a \define{separating sieve} if there is a conull set $X' \subset X$ such that, whenever $x,x' \in X'$ and, for each $n \in \mathbb{N}$, one can find $g_n \in G$ with $\{ T^{g_n} x, T^{g_n} x' \} \subset A_n$, one has $x = x'$.
Zimmer~\cite[Theorem~8.7]{Zimmer76} proved that a non-atomic $G$ system $\mathbf{X}$ is measurably distal if and only if it has a separating sieve.
Using this characterization of distality one can show that every topological $G$ system $(X,T)$ that is topologically distal has the property that for every $T$ invariant Borel probability measure $\mu_X$ on $X$ the $G$ system $(X,T,\mu_X)$ is measurably distal.
Lindenstrauss~\cite{MR1709430} has proved a partial converse to this result for $\mathbb{Z}$ systems by showing that every measurably distal $\mathbb{Z}$ system can be modelled by a topologically distal $\mathbb{Z}$ system equipped with an invariant Borel probability measure.

\section{Proof of \cref{thm_samequasidisjoint}}
\label{sec:samequasidisjoint}

In this section we prove Theorem~\ref{thm_samequasidisjoint}.
It follows a preparatory discussion parameterizing the space of joinings of ergodic, almost-periodic $\mathbb{Z}$ systems and describing the ergodic decomposition of the product of two such $\mathbb{Z}$ systems.

As described in Subsection~\ref{subsec:kroneckerFactor} the Kronecker factor of an ergodic $\Z$ system can be modeled as an ergodic rotation on a compact abelian group.
Given two ergodic rotations on compact, abelian groups ${\mathbf X}=(X,T,\mu_X)$ and ${\mathbf Y}=(Y,S,\mu_Y)$, their ergodic joinings can be easily described as follows:
Let $e_X$ be the identity of the compact abelian group $X$, let $e_Y$ be the identity of $Y$ and let $H$ be the subgroup
\[
H = \overline{\{(T\times S)^n(e_X,e_Y):n\in\Z\}}
\]
of $X\times Y$.
Given an ergodic joining $\lambda\in\ergjoinings(\mathbf{X},\mathbf{Y})$, let $(x_0,y_0)\in X\times Y$ be a generic point.
The support of $\lambda$ is the orbit closure of $(x_0,y_0)$, which is $(x_0,y_0)+H$. Hence the pushforward of $\lambda$ under the map $(x,y)\mapsto(x-x_0,y-y_0)$ is a measure on $H$ invariant under $T\times S$, and hence must be the Haar measure on $H$.
It follows that $\lambda$ is the Haar measure on the coset $(x_0,y_0)+H$.

Now let $K = (X\times Y)/H$ be the group of cosets of $H$ and write $\mu_K$ for Haar measure on $K$.
Define a map $R : K \to K$ by $R:(x,y)+H\mapsto(Tx,y)+H=(x,S^{-1}y)+H$.
Let $\alpha:X\to K$ be defined by $\alpha(x)=(x,e_Y)+H$ and let $\beta:Y\to K$ be defined by $\beta(y)=(e_X,-y)+H$.
It's easy to check that both $\alpha$ and $\beta$ are factor maps onto $(K,R)$ and hence either ${\mathbf X}\times{\mathbf Y}$ is ergodic (and thus $K=\{\id\}$), or $\mathbf{X}$ and $\mathbf{Y}$ share a nontrivial common factor.


Notice that $\gamma(x,y)=(x,y)+H$ from ${\mathbf X}\times{\mathbf Y}$ to $K$ is the maximal invariant factor (because every invariant function is constant along cosets of $H$). In fact $(K,R)$ is the maximal common factor of $\mathbf{X}$ and $\mathbf{Y}$.

Since $\gamma^{-1}\big((x,y)+H\big)=(x,y)+H$, there exists exactly one ergodic joining living in that pre-image, namely, the Haar measure.
For each $k\in K$ let $\lambda_k\in\ergjoinings({\mathbf X},{\mathbf Y})$ be the unique joining such that $\lambda_k(\gamma^{-1}(k))=1$.
Observe that
\begin{equation}
\label{eq_kronergdecomposition}
\mu_X\otimes\mu_Y=\int_K\lambda_k \d\mu_K(k)
\end{equation}
is therefore the ergodic decomposition of $\mu_X \otimes \mu_Y$.

\begin{proof}[Proof of Theorem~\ref{thm_samequasidisjoint}]
First suppose that $\mathbf{X}$ and $\mathbf{Y}$ are quasi-disjoint.
Let $(K,\mu_K)$ be a model for their joint Kronecker factor and let $k \mapsto \lambda_k$ be a measurable map from $K$ into the space $\ergjoinings(\mathbf{X},\mathbf{Y})$ of ergodic joinings of $\mathbf{X}$ and $\mathbf{Y}$ such that $\lambda_k$ gives full measure to $\gamma^{-1}(k)$ for almost every $k$.
Then
\[
\eta = \int \lambda_k \intd \mu_K(k)
\]
is a joining of ${\mathbf X}$ and ${\mathbf Y}$.
We claim that the projection of $\eta$ to $\kron \mathbf{X} \times \kron \mathbf{Y}$ is the product measure.
Indeed, let $\pi:{\mathbf X}\times{\mathbf Y}\to\kron{\mathbf X}\times\kron{\mathbf Y}$ be the corresponding factor map.
We can decompose $\gamma=\tilde\gamma\circ\pi$ for some $\tilde\gamma:\kron X\times\kron Y\to K$.
Since $\pi(\lambda_k)$ gives full measure to $\tilde\gamma^{-1}(k)$, the discussion preceding this proof implies it is uniquely determined by $k$.
In particular, in view of \cref{eq_kronergdecomposition} we have
\[
\pi(\eta)=\int\pi(\lambda_k)\intd \mu_K(k)=\mu_{\kron{\mathbf X}}\otimes\mu_{\kron{\mathbf Y}}
\]
as claimed.
But then by quasi-disjointness $\eta = \mu_{\mathbf X} \otimes \mu_{\mathbf Y}$.
By uniqueness of the ergodic disintegration, the map $k\to\lambda_k$ is uniquely defined almost everywhere, so $\mathbf{X}$ and $\mathbf{Y}$ satisfy (BQD).

Conversely, suppose that $\mathbf{X}$ and $\mathbf{Y}$ satisfy (BQD).
Let $\eta$ be a joining of $\mathbf{X}$ and $\mathbf{Y}$ that projects to the product measure on $\kron \mathbf{X} \times \kron \mathbf{Y}$.
Then $\gamma \eta$ is the Haar measure on the maximal common Kronecker factor $\kron(\mathbf{X},\mathbf{Y})$ of $\mathbf{X}$ and $\mathbf{Y}$.
The ergodic disintegration of $\eta$ is the same as the disintegration of $\eta$ over $\kron(\mathbf{X},\mathbf{Y})$.
By (BQD), this disintegration is in turn the same as the disintegration of $\mu \otimes \nu$ over $\kron(\mathbf{X},\mathbf{Y})$.
Therefore $\eta = \mu \otimes \nu$.
\end{proof}

\section{Quasi-disjointness for measurably distal systems}
\label{sec:distalSystems}

In this section we give a proof of \cref{thm:main-result}. The proof is comprised of three parts, covered in the following three subsections. The first part consists of showing that quasi-disjointness lifts through group-extensions. The second part proves that quasi-disjointness is preserved when passing to a factor and the third part consists of showing that quasi-disjointness is preserved by inverse limits.
Since, starting from the trivial system, such operations exhaust the class of distal systems, these three parts combined indeed yield a complete proof of \cref{thm:main-result}.
We conclude this section with an example of a $\mathbb{Z}$ system that is not measurably distal but is quasi-disjoint from every ergodic system.

\subsection{Quasi-disjointness lifts through group extensions}

The purpose of this subsection is to prove the following result.

\begin{theorem}
\label{thm:groupextensions}
  Let ${\mathbf X}=(X,T,\mu_X)$, ${\mathbf Y}=(Y,S,\mu_Y)$ and ${\mathbf Z}=(Z,R,\mu_Z)$ be $G$ systems and assume that ${\mathbf X}$ is a group extension of ${\mathbf Y}$. If ${\mathbf Y}$ is quasi-disjoint from ${\mathbf Z}$, then so is ${\mathbf X}$.
\end{theorem}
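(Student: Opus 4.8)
The plan is to fix a joining $\lambda \in \joinings(\mathbf{X},\mathbf{Z})$ whose projection to $\kron\mathbf{X} \times \kron\mathbf{Z}$ is the product measure and to show $\lambda = \mu_X \otimes \mu_Z$; by \cref{def:quasidisjoint} this is exactly quasi-disjointness of $\mathbf{X}$ and $\mathbf{Z}$. Write $\pi : \mathbf{X} \to \mathbf{Y}$ for the factor map exhibiting the group extension, and let $L \le \Aut(\mathbf{X})$ be the compact group with $\mathbf{Y} = \mathbf{X}/L$. First I would push $\lambda$ down: $\bar\lambda := (\pi \times \id)_* \lambda$ is a joining of $\mathbf{Y}$ and $\mathbf{Z}$. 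Since $\mathbf{Y}$ is a factor of $\mathbf{X}$ we have $\ap(\mathbf{Y}) \subseteq \ap(\mathbf{X})$, so $\kron\mathbf{Y}$ is a factor of $\kron\mathbf{X}$ and the projection of $\bar\lambda$ to $\kron\mathbf{Y} \times \kron\mathbf{Z}$ is obtained from that of $\lambda$ to $\kron\mathbf{X}\times\kron\mathbf{Z}$; hence it is again a product measure. Quasi-disjointness of $\mathbf{Y}$ and $\mathbf{Z}$ then forces $\bar\lambda = \mu_Y \otimes \mu_Z$. In particular $\lambda$ disintegrates over $Y \times Z$ against $\mu_Y \otimes \mu_Z$, say $\lambda = \int_{Y\times Z} \lambda_{(y,z)}\,d\mu_Y(y)\,d\mu_Z(z)$, with $\lambda_{(y,z)}$ supported on the fiber $\pi^{-1}(y)$.

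Next I would exploit the group structure to upgrade ``$\bar\lambda$ is trivial'' to ``$\lambda$ is trivial''. Modeling the extension as a skew product, so that each fiber is a homogeneous space of $L$ carrying Haar measure as the conditional measure of $\mu_X$, decompose $\lp^2(X,\mu_X)$ into $L$-isotypic components. The trivial component is $\lp^2(Y,\mu_Y)$ and is handled by the previous paragraph, since there $\int f\otimes h\,d\lambda = \int f\otimes h\,d\bar\lambda = \int f\,d\mu_X \int h\,d\mu_Z$. It therefore suffices to show that $\int f \otimes h\,d\lambda = 0$ for every $h \in \lp^\infty(Z)$ and every $f$ lying in the isotypic component of a nontrivial irreducible unitary representation $\sigma : L \to U(d)$. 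Writing such an $f$ through the matrix coefficients of $\sigma$, this reduces to showing that the matrix-valued function $C(y,z) := \int \sigma\,d\lambda_{(y,z)}$ (the $\sigma$-Fourier coefficient of the fiber measure, taken in skew coordinates) vanishes $\mu_Y\otimes\mu_Z$-almost everywhere.

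The key observation is that these Fourier coefficients assemble into an invariant function that the Kronecker hypothesis can detect. Using the cocycle $\rho$ of the skew product and the $T \times R$ invariance of $\lambda$, the fiber measures satisfy $T^g_* \lambda_{(y,z)} = \lambda_{(S^g y, R^g z)}$, which translates into the equivariance $C(S^g y, R^g z) = \sigma(\rho(g,y))\,C(y,z)$. Consequently the entries of $F(x,z) := \sigma(\ell_x)^{-1} C(\pi(x),z)$, where $\ell_x$ denotes the fiber coordinate of $x$, define $T \times R$ invariant functions on $X \times Z$ lying in $\lp^2(\mu_X \otimes \mu_Z)$. By the reasoning used in the proof of \cref{prop:invariantVectors}, every such invariant function is orthogonal to $\wm(\mathbf{X}\times\mathbf{Z})$, hence belongs to $\ap(\mathbf{X}\times\mathbf{Z}) = \ap(\mathbf{X}) \otimes \ap(\mathbf{Z})$ by \cref{prop:K-factor-of-product}; that is, each $F_{ij}$ is measurable with respect to $\kron\mathbf{X}\times\kron\mathbf{Z}$. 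Therefore $\int F_{ij}\,d\lambda$ depends only on the projection of $\lambda$ to $\kron\mathbf{X}\times\kron\mathbf{Z}$, which is the product measure, so $\int F_{ij}\,d\lambda = \int F_{ij}\,d(\mu_X\otimes\mu_Z)$. A direct computation, integrating the nontrivial matrix coefficients of $\sigma$ against Haar measure on the fibers, gives $\int F_{ij}\,d(\mu_X\otimes\mu_Z) = 0$, while expanding $\int F_{ii}\,d\lambda$ through the disintegration yields $\sum_i \int F_{ii}\,d\lambda = \int \operatorname{tr}(C^* C)\,d\mu_Y\,d\mu_Z = \|C\|_{\lp^2}^2$. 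Combining these gives $\|C\|_{\lp^2} = 0$, hence $C = 0$, which is exactly what we wanted; together with the trivial isotypic component this forces $\lambda = \mu_X \otimes \mu_Z$.

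I expect the main obstacle to be the middle step: setting up a workable model of the group extension (a skew product with Haar fiber measures and a cocycle) and correctly bookkeeping the Peter--Weyl/isotypic decomposition together with the equivariance of the fiber Fourier coefficients. The conceptual heart, and the single place where the Kronecker hypothesis enters, is the realization that the twisted fiber Fourier coefficients $C$ reassemble into genuinely $T\times R$ invariant functions on $X \times Z$, so that \cref{prop:invariantVectors} and \cref{prop:K-factor-of-product} place them on the Kronecker factors and let the product-measure hypothesis on $\kron\mathbf{X}\times\kron\mathbf{Z}$ close the argument. Verifying the equivariance relation and the two integral computations is then routine.
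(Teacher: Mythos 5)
Your overall architecture tracks the paper's proof closely: push $\lambda$ down to $\mathbf{Y}\times\mathbf{Z}$ and invoke quasi-disjointness there, then use the compact group $L$ to manufacture a $T\times R$ invariant function in $\lp^2(\mu_X\otimes\mu_Z)$, place it in $\ap(\mathbf{X})\otimes\ap(\mathbf{Z})$ via \cref{prop:K-factor-of-product}, and let the hypothesis on the Kronecker projection close the argument. However, there is a genuine gap at the single step where that hypothesis is applied. You argue that since each $F_{ij}$ lies in $\ap(\mathbf{X})\otimes\ap(\mathbf{Z})$, the integral $\int F_{ij}\,\mathsf{d}\lambda$ ``depends only on the projection of $\lambda$ to $\kron\mathbf{X}\times\kron\mathbf{Z}$.'' But membership in $\ap(\mathbf{X})\otimes\ap(\mathbf{Z})$ is an identity in $\lp^2(\mu_X\otimes\mu_Z)$: it says $F_{ij}$ agrees with a function pulled back from $\kron X\times\kron Z$ only up to a $\mu_X\otimes\mu_Z$-null set. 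You are integrating $F_{ij}$ against $\lambda$, which a priori may be singular with respect to $\mu_X\otimes\mu_Z$ --- indeed, ruling out exactly this singularity is the content of the theorem --- so a $\mu_X\otimes\mu_Z$-null set on which $F_{ij}$ differs from its Kronecker-measurable representative can carry positive $\lambda$-mass, and the equality $\int F_{ij}\,\mathsf{d}\lambda=\int F_{ij}\,\mathsf{d}(\mu_X\otimes\mu_Z)$ does not follow. (The exceptional set is $T\times R$ invariant, but invariant $\mu_X\otimes\mu_Z$-null sets need not be $\lambda$-null.) Note that this is different from the legitimate uses of the hypothesis, which apply it only to integrals of \emph{product} functions $u\otimes v$ with $u$, $v$ measurable with respect to $\kron\mathbf{X}$, $\kron\mathbf{Z}$ separately; those are defined $\lambda$-a.e.\ because the marginals of $\lambda$ are $\mu_X$ and $\mu_Z$. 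Your $F_{ij}$ is a genuinely joint function built from the disintegration of $\lambda$, so no such escape is available.

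The paper's proof is structured precisely to avoid this trap, and the missing ingredient is one you skipped over: before using any almost-periodicity, it shows that averaging $\lambda$ over \emph{all} of $L$ already yields $\mu_X\otimes\mu_Z$ (this is your ``trivial isotypic component'' computation, but promoted to a statement about measures). It follows that every smoothed joining $\lambda_\psi$, obtained by convolving the fibers of $\lambda$ with a density $\psi\in\lp^\infty(L,\mu_L)$, satisfies $\lambda_\psi\le\norm{\psi}{\infty}\,\mu_X\otimes\mu_Z$, so its Radon--Nikodym derivative $F_\psi$ is an honest element of $\lp^\infty(\mu_X\otimes\mu_Z)$; only then is the invariance-implies-$\ap(\mathbf{X})\otimes\ap(\mathbf{Z})$ step applied to $F_\psi$, with every subsequent integral taken against $\mu_X\otimes\mu_Z$ or against the absolutely continuous $\lambda_\psi$, and $\lambda$ itself is recovered at the end by letting $\psi$ run through an approximate identity. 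Your fiberwise Fourier coefficients $C$ encode the same information as the family $F_\psi$, and your argument can be repaired along these lines (smooth first, then take Fourier coefficients), but the absolute-continuity step is not a routine bookkeeping matter --- it is the hinge of the proof. By contrast, the skew-product modelling that you flagged as the main obstacle is comparatively minor (though you should also beware that the fibers are homogeneous spaces $L/H$ rather than $L$ itself, so the twist $\sigma(\ell_x)^{-1}$ needs care, and the paper's abstract formulation via $L\le\Aut(\mathbf{X})$ sidesteps choosing fiber coordinates altogether).
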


For the proof of \cref{thm:groupextensions} we borrow ideas from the proof of \cite[Theorem 1.4]{Furstenberg67}.
See \cite[Theorem~3.30]{MR1958753} for a version of that result applicable to actions of countable groups.

\begin{proof}[Proof of \cref{thm:groupextensions}]
Let $\pi_{\mathbf{X},\kron\mathbf{X}}$ and $\pi_{\mathbf{Z},\kron\mathbf{Z}}$ denote the projection maps from $\mathbf{X}$ onto $\kron\mathbf{X}$ and from $\mathbf{Z}$ onto $\kron\mathbf{Z}$ respectively.
Let $\lambda\in\joinings(\mathbf{X},\mathbf{Z})$ be a joining of $\mathbf{X}$ and $\mathbf{Z}$ with the property that
$(\pi_{\mathbf{X},\kron\mathbf{X}}\times \pi_{\mathbf{Z},\kron\mathbf{Z}})(\lambda)=\mu_{\kron X}\otimes\mu_{\kron Z}$. We want to show that $\lambda=\mu_X\otimes \mu_Z$.

Let $\pi_{\mathbf{X},\mathbf{Y}}$ denote the factor map from $\mathbf{X}$ onto $\mathbf{Y}$. Note that ${(\pi_{\mathbf{X},\mathbf{Y}}\times\text{id}_{\mathbf{Z}})}(\lambda)$ is a joining of $\mathbf{Y}$ with $\mathbf{Z}$ whose projection onto the product of the Kronecker factors $\kron\mathbf{Y}\times\kron\mathbf{Z}$ equals $\mu_{KY}\otimes\mu_{KZ}$. Since ${\mathbf Y}$ is quasi-disjoint from ${\mathbf Z}$, we conclude that ${(\pi_{\mathbf{X},\mathbf{Y}}\times\text{id}_{\mathbf{Z}})}(\lambda)=\mu_Y\otimes\mu_Z$.

Since $\mathbf{X}$ is a group extension of $\mathbf{Y}$, there exists a compact group $L\leq \Aut(\mathbf{X})$ such that the factor $\mathbf{Y}$ corresponds to the sub-$\sigma$-algebra of $L$ invariant subsets of $\mathbf{X}$ (cf.\ Subsection~\ref{subsec:prelim-distal}).
Let $\mu_L$ denote the normalized Haar measure on $L$ and let $\mathbf{L}=(L,Q,\mu_L)$, where $Q$ is the action of $L$ on itself by left multiplication.
Given $\psi \in \lp^\infty(L,\mu_L)$ with $\psi\geq 0$ we define a new joining $\lambda_{\psi}\in\joinings(\mathbf{X},\mathbf{Z})$ by
\begin{equation}
\label{eqn:F-ac-measure}
\lambda_{\psi}(A) = \int_{X\times Z} \int_{K} \1_{A}(lx,z)\psi(l) \intd\mu_L(l)\intd\lambda(x,z)
\end{equation}
for all Borel sets $A \subset X \times Z$.
Let $\lambda_1$ denote the measure defined by \eqref{eqn:F-ac-measure} with $\psi = 1$.
We claim that $\lambda_1 = \mu_X\otimes\mu_Z$.
To verify this claim, let $f \in \lp^2(X,\mu_X)$, $g\in \lp^2(Z,\mu_Z)$ and define $f'(x)=\int_{L} f(lx)\intd\mu_L(l)$ for all $x\in X$.
Note that $f'$ is $L$ invariant and hence there exists $h\in \lp^2(Y,\mu_Y)$ such that $h\circ \pi_{\mathbf{X},\mathbf{Y}}=f'$.
We have
\begin{align*}
\int_{X\times Z} f\otimes g\intd\lambda_1
&
=
\int_{X\times Z} f'\otimes g \intd\lambda
\\
&
=
\int_{X\times Z} (h\otimes g)\circ (\pi_{\mathbf{X},\mathbf{Y}}\times\text{id}_{\mathbf{Z}}) \intd\lambda
\\
&
=
\int_{Y\times Z} h\otimes g\intd
{(\pi_{\mathbf{X},\mathbf{Y}}\times\text{id}_{\mathbf{Z}})}(\lambda)
\\
&
=
\int_{Y\times Z} h\otimes g \intd\mu_Y\otimes\mu_Z
\\
&
=
\int_{X\times Z} f'\otimes g \intd\mu_X\otimes\mu_Z
=
\int_{X\times Z} f\otimes g \intd\mu_X\otimes\mu_Z
\end{align*}
because $(\pi_{\mathbf{X},\mathbf{Y}} \times \id_\mathbf{Z}) \lambda = \mu_Y \otimes \mu_Z$.
This shows that indeed $\lambda_1 = \mu_X \otimes \mu_Z$.

Next, observe that
\begin{equation}
\label{eqn:radon-nikodym}
\lambda_{\psi}(A)
\le
\lambda_1(A) \norm{\psi}{\infty}
=
(\mu_X\otimes \mu_Z)(A) \norm{\psi}{\infty}.
\end{equation}
for all Borel sets $A \subset X \times Z$.
Inequality \eqref{eqn:radon-nikodym} shows that $\lambda_{\psi}$ is absolutely continuous with respect to $\mu_X\otimes \mu_Z$. Let $F_\psi$ denote the Radon-Nikodym derivative of $\lambda_{\psi}$ with respect to $\mu_X\otimes \mu_Z$.
It also follows form \eqref{eqn:radon-nikodym} that $\norm{F_\psi}{\infty} \leq \norm{\psi}{\infty}$ and so $F_\psi\in\lp^\infty(X \times Z, \mu_X \otimes \mu_Z)$.
Moreover, since $\lambda_\psi$ is a $T \times R$ invariant measure, we conclude that $F_\psi$ is a $T \times R$ invariant function in $\lp^\infty(X \times Z, \mu_X \otimes \mu_Z)$.
Since any $T \times R$ invariant function is almost periodic, it follows that $F_\psi\in\ap( \mathbf{X} \times \mathbf{Z} )$.
Therefore $F_\psi \in \ap(\mathbf{X}) \otimes \ap(\mathbf{Z})$ by \cref{prop:K-factor-of-product}.
It follows that for all $f\in \lp^2(X,\mu_X)$ and $g\in \lp^2(Z,\mu_Z)$,
\begin{align*}
\int_{X\times Z} f\otimes g\intd\lambda_{\psi}
&
=
\int_{X\times Z} F_\psi \cdot (f\otimes g)\intd(\mu_X\otimes\mu_Z)
\\
&
=
\int_{X\times Z} F_\psi \cdot \big(\E(f|\kron\mathbf{X})\otimes\E(g|\kron\mathbf{Z})\big)\intd(\mu_X\otimes\mu_Z)
\\
&
=
\int_{X\times Z} \E(f|\kron\mathbf{X})\otimes\E(g|\kron\mathbf{Z}) \intd\lambda_{\psi}
\\
&
=
\int_{X\times Z} \int_{L} \E(f|\kron\mathbf{X})(lx) \, \E(g|\kron\mathbf{Z})(z) \, \psi(l) \intd\mu_L(l)\intd\lambda(x,z)
\\
&
=
\int_{L}
\left(
\int_{X\times Z} \E(lf | \kron\mathbf{X})(x) \, \E(g|\kron\mathbf{Z})(z) \intd\lambda(x,z)\right) \psi(l) \intd\mu_L(l).
\end{align*}
Since $(\pi_{\mathbf{X},\kron\mathbf{X}}\times \pi_{\mathbf{Z},\kron\mathbf{Z}})(\lambda)=\mu_{\kron X}\otimes\mu_{\kron Z}$ we have
\begin{align*}
\int_{X\times Z}  \E(l f | \kron\mathbf{X}) \otimes \E(g|\kron\mathbf{Z})\intd\lambda
&
=
\int_{X\times Z}  \E(l f | \kron\mathbf{X}) \otimes \E(g|\kron\mathbf{Z})\intd(\mu_X\otimes\mu_Z)
\\
&
=
\int_{X\times Z}  f\otimes g \intd(\mu_X\otimes\mu_Z).
\end{align*}
for all $l \in L$.
We conclude that
\[
\int_{X\times Z} f\otimes g\intd\lambda_{\psi}
=
\left(\int_{X\times Z}  f\otimes g \intd(\mu_X\otimes\mu_Z)\right)\left( \int_{L}\psi(l)\intd\mu_L(l)\right)
\]
so in particular, for any $\psi\in\lp^\infty(L,\mu_L)$ with $\psi \geq 0$ and $\int_L \psi\intd\mu_L = 1$ the measure $\lambda_\psi$ coincides with $\mu_X\otimes\mu_Z$.

Finally, allowing $\psi$ to run though an approximate identity, one can approximate $\lambda$ by $\lambda_\psi$ and thereby conclude that $\lambda = \mu_X\otimes\mu_Z$, which finishes the proof.
\end{proof}

\subsection{Quasi-disjointness passes to factors}

In this subsection we prove the following theorem.

\begin{theorem}
\label{thm:factors}
Let $\mathbf{X}=(X,T,\mu_X)$, $\mathbf{Y}=(Y,S,\mu_Y)$ and $\mathbf{Z}=(Z,R,\mu_Z)$ be $G$ systems and suppose that $\mathbf{Y}$ is a factor of $\mathbf{X}$.
If $\mathbf{X}$ and $\mathbf{Z}$ are quasi-disjoint, then $\mathbf{Y}$ and $\mathbf{Z}$ are also quasi-disjoint.
\end{theorem}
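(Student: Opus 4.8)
The plan is to lift any candidate joining of $\mathbf{Y}$ and $\mathbf{Z}$ to a joining of $\mathbf{X}$ and $\mathbf{Z}$ via the relatively independent product over $\mathbf{Y}$, and then invoke the quasi-disjointness of $\mathbf{X}$ and $\mathbf{Z}$. Let $\pi=\pi_{\mathbf{X},\mathbf{Y}}:\mathbf{X}\to\mathbf{Y}$ be the factor map and let $\nu\in\joinings(\mathbf{Y},\mathbf{Z})$ be a joining whose projection to $\kron\mathbf{Y}\times\kron\mathbf{Z}$ equals $\mu_{\kron Y}\otimes\mu_{\kron Z}$; the goal is to prove $\nu=\mu_Y\otimes\mu_Z$. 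Using the disintegration $y\mapsto\mu_y$ of $\mu_X$ over $\pi$ from \cref{sec:disintegrations}, I would define a measure $\lambda$ on $X\times Z$ by
\[
\int_{X\times Z} F\intd\lambda = \int_{Y\times Z}\left(\int_X F(x,z)\intd\mu_y(x)\right)\intd\nu(y,z)
\]
for all $F\in\cont(X\times Z)$.

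First I would check that $\lambda\in\joinings(\mathbf{X},\mathbf{Z})$ and that $(\pi\times\id_{\mathbf{Z}})(\lambda)=\nu$. The marginal of $\lambda$ on $X$ is $\mu_X$ because the $Y$-marginal of $\nu$ is $\mu_Y$ and $\int\mu_y\intd\mu_Y(y)=\mu_X$; the marginal on $Z$ is $\mu_Z$ because each $\mu_y$ is a probability measure and the $Z$-marginal of $\nu$ is $\mu_Z$; invariance of $\lambda$ under $T\times R$ follows from the equivariance $T^g\mu_y=\mu_{S^g y}$ of the disintegration together with the $S\times R$ invariance of $\nu$. Since each $\mu_y$ is supported on the fiber $\pi^{-1}(y)$, testing against functions of the form $(\phi\circ\pi)\otimes\psi$ shows $(\pi\times\id_{\mathbf{Z}})(\lambda)=\nu$.

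The crucial step is to verify that $\lambda$ projects to the product measure on $\kron\mathbf{X}\times\kron\mathbf{Z}$. For $f\in\ap(\mathbf{X})$ and $g\in\ap(\mathbf{Z})$ the definition of $\lambda$ gives
\[
\int_{X\times Z} f\otimes g\intd\lambda = \int_{Y\times Z} \condex{f}{\mathbf{Y}}\otimes g\intd\nu.
\]
Here I would use the key observation that $\condex{f}{\mathbf{Y}}$ is again almost periodic, i.e.\ $\condex{f}{\mathbf{Y}}\in\ap(\mathbf{Y})$: the conditional expectation $\condex{\cdot}{\mathbf{Y}}$ is a $G$-equivariant contraction of $\lp^2$, so it carries the precompact orbit $\{T^g f:g\in G\}$ onto the precompact orbit $\{S^g\condex{f}{\mathbf{Y}}:g\in G\}$. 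Consequently $\condex{f}{\mathbf{Y}}\otimes g$ is measurable with respect to $\kron\mathbf{Y}\times\kron\mathbf{Z}$, and since $\nu$ projects to $\mu_{\kron Y}\otimes\mu_{\kron Z}$ there, the right-hand side factors as $\int\condex{f}{\mathbf{Y}}\intd\mu_Y\cdot\int g\intd\mu_Z=\int f\intd\mu_X\cdot\int g\intd\mu_Z$. As $f$ and $g$ range over $\ap(\mathbf{X})$ and $\ap(\mathbf{Z})$ this says precisely that the projection of $\lambda$ to $\kron\mathbf{X}\times\kron\mathbf{Z}$ is the product measure.

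With this in hand, the quasi-disjointness of $\mathbf{X}$ and $\mathbf{Z}$ forces $\lambda=\mu_X\otimes\mu_Z$, and pushing forward under $\pi\times\id_{\mathbf{Z}}$ yields $\nu=(\pi\times\id_{\mathbf{Z}})(\mu_X\otimes\mu_Z)=\mu_Y\otimes\mu_Z$, completing the proof. I expect the main obstacle to be the crucial step above, namely ensuring that the relatively independent lift introduces no spurious correlations between the Kronecker factors; this hinges exactly on the stability of almost periodicity under the conditional expectation onto a factor, so that $\condex{f}{\mathbf{Y}}$ remains $\kron\mathbf{Y}$-measurable and the Kronecker projection of $\nu$ can be applied.
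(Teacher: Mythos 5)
Your proof is correct, and although it follows the same overall strategy as the paper's -- lift the given joining $\nu$ of $\mathbf{Y}$ and $\mathbf{Z}$ to a joining of $\mathbf{X}$ and $\mathbf{Z}$, check that the lift still projects to the product measure on $\kron\mathbf{X}\times\kron\mathbf{Z}$, invoke the quasi-disjointness of $\mathbf{X}$ and $\mathbf{Z}$, and push back down -- your implementation is genuinely leaner. The paper lifts over the intermediate factor $\kron\mathbf{X}\times_{\kron\mathbf{Y}}\mathbf{Y}$, which forces it to first prove that this relatively independent joining is a factor of $\mathbf{X}$ (\cref{lem:rij}), to work on the three-fold space $\kron X\times Y\times Z$, and to establish $\condex{1\otimes\phi}{\kron\mathbf{Y}}=\int\phi\intd\mu_Z$ in $\lp^2(Y\times Z,\nu)$ by splitting $\phi=\condex{\phi}{\kron\mathbf{Z}}+(\phi-\condex{\phi}{\kron\mathbf{Z}})$ and appealing to Lemmas~\ref{lema:lift-of-ap} and~\ref{lema:lift-of-wm}. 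You instead lift relatively independently over $\mathbf{Y}$ itself (this is exactly \cref{cor:lift-of-joining} with $\mathbf{B}=\mathbf{Y}$ and $\mathbf{C}=\mathbf{Z}$), and the entire verification of the Kronecker projection collapses to the single observation that the $G$-equivariant contraction $\condex{\cdot}{\mathbf{Y}}$ maps $\ap(\mathbf{X})$ into $\ap(\mathbf{Y})$; that observation is sound (compact orbit closures have compact continuous images, and the property passes to the closed linear span), and it is in substance the same fact the paper extracts from its two lemmas, namely $\ap(\mathbf{X})\cap\lp^2(Y,\mu_Y)=\ap(\mathbf{Y})$. Since $\ap(\mathbf{X})$ and $\ap(\mathbf{Z})$ are precisely the subspaces of functions measurable with respect to the Kronecker factors, testing the lifted joining against $f\otimes g$ with $f\in\ap(\mathbf{X})$ and $g\in\ap(\mathbf{Z})$ does determine its projection to $\kron\mathbf{X}\times\kron\mathbf{Z}$, so your argument is complete. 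The paper's heavier construction yields a marginally stronger intermediate statement (independence of the lift over $\kron X\times Z$ rather than only over $\kron X\times\kron Z$), but that extra strength is never used; your route reaches the theorem more directly.
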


For the proof of \cref{thm:factors} we need to recall the definition of relatively independent joinings.
Let $\mathbf{X}=(X,T,\mu_X)$, $\mathbf{Y}=(Y,S,\mu_Y)$ and $\mathbf{Z}=(Z,R,\mu_Z)$ be $G$ systems and suppose that $\mathbf{Y}$ is a factor of both $\mathbf{X}$ and $\mathbf{Z}$.
Let $\pi_{\mathbf{X},\mathbf{Y}}:X\to Y$ and $\pi_{\mathbf{Z},\mathbf{Y}}:Z\to Y$ denote the respective factor maps.
Using $\pi_{\mathbf{X},\mathbf{Y}}$ we can embed $\lp^2(Y,\mu_Y)$ into $\lp^2(X,\mu_X)$. Likewise, through $\pi_{\mathbf{Z},\mathbf{Y}}$ we can identify $\lp^2(Y,\mu_Y)$ as a subspace of $\lp^2(Z,\mu_Z)$.
The \define{relatively independent joining of $\mathbf{X}$ with $\mathbf{Z}$ over $\mathbf{Y}$} is the triple $\mathbf{X}\times_{\mathbf{Y}}\mathbf{Z}=(X\times Z,T\times R,\mu_{X}\otimes_{\mathbf{Y}}\mu_Z)$, where $\mu_{X}\otimes_{\mathbf{Y}}\mu_Z$ denotes the unique measure on $X\times Z$ with the property that
\begin{equation}
\label{eqn:rij-1}
\int_{X\times Z} f \otimes g \intd(\mu_{X}\otimes_{\mathbf{Y}}\mu_Z) = \int_{Y}\E(f|\mathbf{Y})\E(g|\mathbf{Y})\intd\mu_Y
\end{equation}
for all $f \in \lp^2(X,\mu_X)$ and all $g \in \lp^2(Z,\mu_Z)$.

\begin{lemma}
\label{lem:rij}
Let $\mathbf{X}=(X,T,\mu_X)$ and $\mathbf{Y}=(Y,S,\mu_Y)$ be $G$ systems and suppose that $\mathbf{Y}$ is a factor of $\mathbf{X}$. Then the relatively independent joining  $\kron\mathbf{X}\times_{\kron\mathbf{Y}}\mathbf{Y}$ is a factor of $\mathbf{X}$.
\end{lemma}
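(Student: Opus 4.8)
The plan is to exhibit an explicit factor map from $\mathbf{X}$ onto $\kron\mathbf{X}\times_{\kron\mathbf{Y}}\mathbf{Y}$. First I would note that the relatively independent joining is even well defined: since $\mathbf{Y}$ is a factor of $\mathbf{X}$, the isometric embedding $\lp^2(Y,\mu_Y)\hookrightarrow\lp^2(X,\mu_X)$ sends almost periodic functions of $\mathbf{Y}$ to almost periodic functions of $\mathbf{X}$, because the lift of $S^g h$ equals $T^g$ applied to the lift of $h$, so the orbit of a lifted function is the isometric image of the original orbit and hence precompact. Thus $\ap(\mathbf{Y})\subseteq\ap(\mathbf{X})$, which means $\kron\mathbf{Y}$ is a factor of $\kron\mathbf{X}$; in particular $\kron\mathbf{Y}$ is a common factor of $\kron\mathbf{X}$ and $\mathbf{Y}$. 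I would then consider the map
\[
\Psi\colon X\to \kron X\times Y,\qquad \Psi(x)=\big(\pi_{\mathbf{X},\kron\mathbf{X}}(x),\,\pi_{\mathbf{X},\mathbf{Y}}(x)\big),
\]
which is measurable and $G$-equivariant since each of its coordinates is a factor map. It then suffices to show that $\Psi(\mu_X)=\mu_{\kron X}\otimes_{\kron\mathbf{Y}}\mu_Y$.

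To identify the pushforward I would check the characterizing identity \eqref{eqn:rij-1}. Fix $f\in\lp^2(\kron X,\mu_{\kron X})$ and $g\in\lp^2(Y,\mu_Y)$ and regard both as functions on $X$ through the respective factor maps; then $\int_{\kron X\times Y} f\otimes g\intd\Psi(\mu_X)=\int_X f\cdot g\intd\mu_X$. As $g$ is $\mathbf{Y}$-measurable, this equals $\int_X g\cdot\E(f|\mathbf{Y})\intd\mu_X$. The crucial observation is that $\E(f|\mathbf{Y})=\E(f|\kron\mathbf{Y})$: conditional expectation onto $\mathbf{Y}$ is a $G$-equivariant orthogonal projection, hence a contraction, so it carries the precompact orbit of the almost periodic function $f$ to the precompact orbit of $\E(f|\mathbf{Y})$, making the latter almost periodic in $\mathbf{Y}$ and therefore $\kron\mathbf{Y}$-measurable. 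Since $\E(f|\kron\mathbf{Y})$ is $\kron\mathbf{Y}$-measurable I may then transfer a conditional expectation onto $g$, obtaining
\[
\int_X g\cdot\E(f|\kron\mathbf{Y})\intd\mu_X=\int_X \E(g|\kron\mathbf{Y})\cdot\E(f|\kron\mathbf{Y})\intd\mu_X=\int_{\kron Y}\E(f|\kron\mathbf{Y})\,\E(g|\kron\mathbf{Y})\intd\mu_{\kron Y},
\]
which is precisely the right-hand side of \eqref{eqn:rij-1}. Hence $\Psi(\mu_X)=\mu_{\kron X}\otimes_{\kron\mathbf{Y}}\mu_Y$ and $\Psi$ is the desired factor map.

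The only substantive step is the identity $\E(f|\mathbf{Y})=\E(f|\kron\mathbf{Y})$ for almost periodic $f$; the rest is the formal calculus of conditional expectations together with the definition of the relatively independent joining. I expect the main care to be needed there, namely in justifying that conditional expectation onto a factor preserves almost periodicity -- equivalently, that it intertwines the unitary $G$-actions and is norm-contractive, so that it sends precompact orbits to precompact orbits. This is exactly what forces the two a priori distinct conditional expectations $\E(f|\mathbf{Y})$ and $\E(f|\kron\mathbf{Y})$ to agree and collapses the whole computation onto the common factor $\kron\mathbf{Y}$.
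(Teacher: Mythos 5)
Your proof is correct and follows essentially the same route as the paper's: you define the same map $x\mapsto(\pi_{\mathbf{X},\kron\mathbf{X}}(x),\pi_{\mathbf{X},\mathbf{Y}}(x))$ and verify the characterizing identity \eqref{eqn:rij-1}, with the key step being the identification $\E(f|\mathbf{Y})=\E(f|\kron\mathbf{Y})$ for almost periodic $f$, which is exactly the paper's appeal to $\ap(\mathbf{X})\cap\lp^2(Y,\mu_Y)=\ap(\mathbf{Y})$. You in fact supply slightly more detail than the paper on why conditional expectation onto a factor preserves almost periodicity.
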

\begin{proof}
Let $\pi_{\mathbf{X},\mathbf{Y}}:X\to Y$ denote the factor map from $\mathbf{X}$ to $\mathbf{Y}$ and let $\pi_{\mathbf{X},\kron\mathbf{X}}:X\to \kron X$ denote the factor map from $\mathbf{X}$ to $\kron\mathbf{X}$.
Let $\tau:X\to  \kron X\times Y$ be defined as $\tau(x)=(\pi_{\mathbf{X},\kron\mathbf{X}}(x),\pi_{\mathbf{X},\mathbf{Y}}(x))$ for all $x\in X$.
We claim that $\tau$ is a factor map from $\mathbf{X}$ onto $\kron\mathbf{X}\times_{\kron\mathbf{Y}}\mathbf{Y}$.
Once this claim is verified, the proof is completed.

To show that $\tau$ is a factor map from $\mathbf{X}$ onto $\kron\mathbf{X}\times_{\kron\mathbf{Y}}\mathbf{Y}$, we must show that the pushforward of $\mu_X$ under $\tau$ equals $\mu_{\kron X}\otimes_{\kron\mathbf{Y}}\mu_Y$.
It suffices to show that
\begin{equation}
\label{eqn:rijif-1}
\int_{\kron X\times Y} f\otimes g \intd (\tau \mu_X)
=
\int_{\kron X\times Y} f\otimes g \intd \mu_{\kron X}\otimes_{\kron\mathbf{Y}}\mu_Y
\end{equation}
for all $f\in \lp^2(\kron X, \mu_K)$ and all $g\in \lp^2(Y,\mu_Y)$.


By definition, the right hand side of \eqref{eqn:rijif-1} equals $\int_{\kron Y}\E(f|\kron\mathbf{Y})\E(g|\kron\mathbf{Y})\intd\mu_{\kron Y}$, which can be rewritten as
\[
\int_{X}\E(f\circ \pi_{\mathbf{X},\kron\mathbf{X}} |\kron\mathbf{Y})\E(g\circ \pi_{\mathbf{X},\mathbf{Y}}|\kron\mathbf{Y})\intd\mu_{X}.
\]
The left hand side of \eqref{eqn:rijif-1} equals
\[
\int_{X}(f\circ \pi_{\mathbf{X},\kron\mathbf{X}})(g\circ \pi_{\mathbf{X},\mathbf{Y}})\intd\mu_X.
\]
Since $f\circ \pi_{\mathbf{X},\kron\mathbf{X}}=\E(f\circ \pi_{\mathbf{X},\kron\mathbf{X}}|\kron\mathbf{X})$ and $g\circ \pi_{\mathbf{X},\mathbf{Y}}=\E(g\circ \pi_{\mathbf{X},\mathbf{Y}}|\mathbf{Y})$
we have that
\[
\int_{X}(f\circ \pi_{\mathbf{X},\kron\mathbf{X}})(g\circ \pi_{\mathbf{X},\mathbf{Y}})\intd\mu_X =
\int_{X}\E(f\circ \pi_{\mathbf{X},\kron\mathbf{X}}|\kron\mathbf{X})\E(g\circ \pi_{\mathbf{X},\mathbf{Y}}|\mathbf{Y}) \intd\mu_X.
\]
Hence \eqref{eqn:rijif-1} is equivalent to
\begin{equation}
\label{eqn:rijif-2}
\int_{X}\E(f\circ \pi_{\mathbf{X},\kron\mathbf{X}}|\kron\mathbf{X})\E(g\circ \pi_{\mathbf{X},\mathbf{Y}}|\mathbf{Y}) \intd\mu_X
=
\int_{X}\E(f\circ \pi_{\mathbf{X},\kron\mathbf{X}} |\kron\mathbf{Y})\E(g\circ \pi_{\mathbf{X},\mathbf{Y}}|\kron\mathbf{Y})\intd\mu_{X}.
\end{equation}
However, since $\E(\cdot|\kron\mathbf{X})$ and $\E(\cdot|\mathbf{Y})$ are orthogonal projections onto $\ap({\mathbf{X}})$ and $\lp^2(Y,\mu_Y)$ respectively, it follows immediately from $\ap(\mathbf{X})\cap \lp^2(Y,\mu_Y)=\ap(\mathbf{Y})$ that \eqref{eqn:rijif-2} is true.
\end{proof}

\begin{lemma}
\label{lem:lift-of-joining}
Let $\mathbf{A} = (A,T_A,\mu_A)$, $\mathbf{B} = (B,T_B,\mu_B)$, $\mathbf{C} = (C,T_C,\mu_C)$ and $\mathbf{D} = (D,T_D,\mu_D)$ be $G$ systems.
If $\mathbf{B}$ is a factor of $\mathbf{A}$ and $\mathbf{D}$ is a factor of $\mathbf{C}$ then the induced map from $\joinings(\mathbf{A}, \mathbf{C})$ to $\joinings(\mathbf{B},\mathbf{D})$ is surjective.
\end{lemma}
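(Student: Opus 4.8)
The plan is to prove surjectivity constructively: given an arbitrary joining $\nu \in \joinings(\mathbf{B},\mathbf{D})$, I would build an explicit preimage $\lambda \in \joinings(\mathbf{A},\mathbf{C})$ using the disintegrations of $\mu_A$ over $\mathbf{B}$ and of $\mu_C$ over $\mathbf{D}$ furnished by Subsection~\ref{sec:disintegrations}. First I would fix factor maps $\pi_{\mathbf{A},\mathbf{B}} : A \to B$ and $\pi_{\mathbf{C},\mathbf{D}} : C \to D$ and record that the induced map on joinings sends $\lambda$ to its pushforward $(\pi_{\mathbf{A},\mathbf{B}} \times \pi_{\mathbf{C},\mathbf{D}})(\lambda)$. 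Let $b \mapsto \mu_A^b$ and $d \mapsto \mu_C^d$ be the measurable disintegrations of $\mu_A$ over $\mathbf{B}$ and of $\mu_C$ over $\mathbf{D}$, so that $\mu_A^b$ is concentrated on $\pi_{\mathbf{A},\mathbf{B}}^{-1}(b)$ and $\mu_C^d$ on $\pi_{\mathbf{C},\mathbf{D}}^{-1}(d)$, and both families satisfy the equivariance relations $T_A^g \mu_A^b = \mu_A^{T_B^g b}$ and $T_C^g \mu_C^d = \mu_C^{T_D^g d}$ for every $g \in G$ almost surely.

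Given $\nu$, I would then set
\[
\lambda = \int_{B \times D} \mu_A^b \otimes \mu_C^d \intd \nu(b,d),
\]
the joint measurability of $(b,d) \mapsto \mu_A^b \otimes \mu_C^d$ ensuring this is a well-defined Borel probability measure on $A \times C$. The proof then reduces to four verifications. For the marginals, pushing $\lambda$ forward to $A$ yields $\int_B \mu_A^b \intd \mu_B(b)$, since the $B$-marginal of the joining $\nu$ is $\mu_B$, and this reassembles $\mu_A$; the $C$-marginal is $\mu_C$ by the symmetric computation. For the pushforward identity, since $\mu_A^b$ projects to the point mass at $b$ under $\pi_{\mathbf{A},\mathbf{B}}$ and $\mu_C^d$ to the point mass at $d$, one has $(\pi_{\mathbf{A},\mathbf{B}} \times \pi_{\mathbf{C},\mathbf{D}})(\mu_A^b \otimes \mu_C^d) = \delta_{(b,d)}$, whence $(\pi_{\mathbf{A},\mathbf{B}} \times \pi_{\mathbf{C},\mathbf{D}})(\lambda) = \nu$, so $\lambda$ is genuinely a lift of $\nu$.

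The step I expect to be the main (albeit routine) obstacle is the $T_A \times T_C$ invariance of $\lambda$. Here I would apply $(T_A \times T_C)^g$ under the integral sign, use the equivariance of the two disintegrations to rewrite the integrand as $\mu_A^{T_B^g b} \otimes \mu_C^{T_D^g d}$, and then invoke the $T_B \times T_D$ invariance of $\nu$ to absorb the shift by a change of variables, recovering $\lambda$. The only points that require care are that the equivariance relations hold only almost everywhere, so the manipulation must be justified $\nu$-almost everywhere using that the $B$- and $D$-marginals of $\nu$ are $\mu_B$ and $\mu_D$, and the joint measurability of the tensor-product family; both are standard. Once invariance is established, $\lambda \in \joinings(\mathbf{A},\mathbf{C})$ and the pushforward computation exhibits $\nu$ as its image, proving surjectivity.
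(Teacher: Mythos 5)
Your proposal is correct and follows essentially the same route as the paper: the lift is defined by the same formula $\lambda = \int \mu_A^b \otimes \mu_C^d \intd\nu(b,d)$, and the verification of the marginals, the pushforward identity, and the $T_A \times T_C$ invariance via equivariance of the disintegrations matches the paper's argument step for step.
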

\begin{proof}
Write $\pi_{\mathbf{A},\mathbf{B}}$ and $\pi_{\mathbf{C},\mathbf{D}}$ for the factor maps.
Let $b \mapsto \mu_{A,b}$ and $d \mapsto \mu_{C,d}$ be disintegrations (cf.\ Subsection~\ref{sec:disintegrations}) of $\mu_A$ and $\mu_C$ over $\mathbf{B}$ and $\mathbf{D}$ respectively.
We have $T_A^g \mu_{A,b} = \mu_{A,T_B^g b}$ and $T_C^g \mu_{C,d} = \mu_{C,T_D^g d}$ almost surely for all $g \in G$.

Fix a joining $\lambda_{B,D}$ of $\mathbf{B}$ and $\mathbf{D}$.
We claim that
\[
\lambda_{A,C} = \int \mu_{A,b} \otimes \mu_{C,d} \intd \lambda_{B,D}(b,d)
\]
is a joining of $\mathbf{A}$ and $\mathbf{C}$ with $(\pi_{\mathbf{A},\mathbf{B}} \times \pi_{\mathbf{C},\mathbf{D}}) \lambda_{A,C} = \lambda_{B,D}$.
First note that
\[
\lambda_{A , C}(E \times C) = \int \mu_{A,b}(E) \intd \lambda_{B,D}(b,d) = \int \mu_{A,b}(E) \intd \mu_B(b) = \mu_A(E)
\]
for all Borel sets $E \subset A$ because $\lambda_{B,D}$ is a joining, so the left marginal of $\lambda_{A,C}$ is $\mu_A$.
Similarly, its right marginal is $\mu_C$.

For all $f \in \cont(B)$ and all $h \in \cont(D)$ we have
\[
\iint f \otimes h \intd(\mu_{A,b} \otimes \mu_{C,d}) \intd \lambda_{B,D}(b,d)
=
\int \int f \intd \mu_{A,b} \int h \intd \mu_{C,d} \intd \lambda_{B,D}(b,d)
=
\int f \otimes h \intd \lambda_{B,D}
\]
by disintegration properties, so $(\pi_{\mathbf{A},\mathbf{B}} \times \pi_{\mathbf{C},\mathbf{D}})\lambda_{A,C} = \lambda_{B,D}$.

Finally, for any $f \in \cont(A)$ and any $h \in \cont(C)$ we calculate that
\begin{align*}
\iint T_A^g f \otimes T_C^g h \intd(\mu_{A,b} \otimes \mu_{C,d}) \intd \lambda_{B,D}(b,d)
&
=
\iint f \otimes h \intd(\mu_{A,T_B^g b} \otimes \mu_{C,T_D^g d}) \intd \lambda_{B,D}(b,d)
\\
&
=
\iint f \otimes h \intd(\mu_{A,b} \otimes \mu_{C,d}) \intd \lambda_{B,D}(b,d)
\\
&
=
\int f \otimes h \intd \lambda_{A,C}
\end{align*}
so $\lambda_{A,C}$ is $T_A \times T_C$ invariant.
\end{proof}

From \cref{lem:lift-of-joining} we obtain the following immediate corollary.

\begin{corollary}
\label{cor:lift-of-joining}
Let $\mathbf{A}=(A,T_A,\mu_A)$, $\mathbf{B}=(B,T_B,\mu_B)$ and $\mathbf{C}=(C,T_C,\mu_C)$ be $G$ systems and suppose that $\mathbf{B}$ is a factor of $\mathbf{A}$.
Let $\pi_{\mathbf{A},\mathbf{B}}$ denote the factor map from $\mathbf{A}$ onto $\mathbf{B}$.
Then for any joining $\lambda\in\joinings(\mathbf{B},\mathbf{C})$ there exists a joining $\lambda'\in\joinings(\mathbf{A},\mathbf{C})$ such that the pushforward of $\lambda'$ under the factor map $\pi_{\mathbf{A},\mathbf{B}}\times \id_{\mathbf{C}}$ equals $\lambda$.
\[\begin{tikzcd}
\mathbf{A}
\arrow[swap]{d}{\pi_{\mathbf{A},\mathbf{B}}}
&
\mathbf{A}\times\mathbf{C}
\arrow{d}{\pi_{\mathbf{A},\mathbf{B}}\times \id_{\mathbf{C}}}
\\
\mathbf{B}
&
\mathbf{B}\times\mathbf{C}
\end{tikzcd}
\]
\end{corollary}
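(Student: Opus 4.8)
The plan is to deduce \cref{cor:lift-of-joining} from \cref{lem:lift-of-joining} by a direct specialization, which is why it is flagged as an immediate corollary. Every $G$ system is trivially a factor of itself via the identity map, so I would apply \cref{lem:lift-of-joining} with the fourth system $\mathbf{D}$ taken to be $\mathbf{C}$ itself and with the corresponding factor map $\pi_{\mathbf{C},\mathbf{D}} = \id_{\mathbf{C}}$. In this setting the factor map $\pi_{\mathbf{A},\mathbf{B}} \times \pi_{\mathbf{C},\mathbf{D}}$ appearing in the lemma becomes exactly $\pi_{\mathbf{A},\mathbf{B}} \times \id_{\mathbf{C}}$, and the induced map whose surjectivity the lemma establishes is precisely pushforward along this map from $\joinings(\mathbf{A},\mathbf{C})$ to $\joinings(\mathbf{B},\mathbf{C})$.

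With this identification, \cref{lem:lift-of-joining} asserts that the pushforward map $\joinings(\mathbf{A},\mathbf{C}) \to \joinings(\mathbf{B},\mathbf{C})$ is surjective. Surjectivity says exactly that for every joining $\lambda \in \joinings(\mathbf{B},\mathbf{C})$ there is a joining $\lambda' \in \joinings(\mathbf{A},\mathbf{C})$ whose pushforward under $\pi_{\mathbf{A},\mathbf{B}} \times \id_{\mathbf{C}}$ equals $\lambda$, which is the content of the corollary. Thus the corollary follows without any additional argument.

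There is no genuine obstacle here, since all the work is carried out in \cref{lem:lift-of-joining}. The only point worth a sanity check is that the construction from that proof specializes transparently: the disintegration $c \mapsto \mu_{C,c}$ of $\mu_C$ over the identity factor $\mathbf{D} = \mathbf{C}$ is the family of point masses $c \mapsto \delta_c$, so the lifted joining simply reads $\lambda' = \int \mu_{A,b} \otimes \delta_c \intd\lambda(b,c)$, and the verification that this is a joining projecting correctly onto $\lambda$ is the already-proved special case of the lemma. Consequently I would phrase the proof as a one-line invocation of \cref{lem:lift-of-joining} with $\mathbf{D} = \mathbf{C}$.
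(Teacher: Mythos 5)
Your proposal is correct and matches the paper exactly: the paper presents this statement as an immediate consequence of \cref{lem:lift-of-joining}, obtained by taking $\mathbf{D}=\mathbf{C}$ with $\pi_{\mathbf{C},\mathbf{D}}=\id_{\mathbf{C}}$, which is precisely your specialization. Your sanity check that the disintegration over the identity factor consists of point masses is a nice confirmation but not needed beyond the one-line invocation.
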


We also need the following lemmas for the proof of Theorem~\ref{thm:factors}.

\begin{lemma}
\label{lema:lift-of-ap}
Let $\pi : \mathbf{X} \to \mathbf{Y}$ be a factor map of $G$ systems.
If $f \in \ap(\mathbf{Y})$ then $f \circ \pi \in \ap(\mathbf{X})$.
\end{lemma}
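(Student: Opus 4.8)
The plan is to exploit the fact that a factor map induces a $G$-equivariant isometric embedding of $\lp^2(Y,\mu_Y)$ into $\lp^2(X,\mu_X)$, and then to note that the defining property of $\ap$ -- precompactness of the orbit in the strong topology -- is preserved under isometries.

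First I would recall that, since $\pi$ is measure-preserving, the map $\iota : \lp^2(Y,\mu_Y) \to \lp^2(X,\mu_X)$ given by $\iota(f) = f \circ \pi$ is a linear isometry (this is exactly the isometric embedding mentioned in Subsection~\ref{sec:mpa}). The next step is to verify that $\iota$ intertwines the two induced right $G$-actions. Using the $G$-equivariance of the factor map, namely $\pi \circ T^g = S^g \circ \pi$ on the conull invariant domain, together with the convention $(T^g f)(x) = f(T^g x)$ and its analogue $S^g f = f \circ S^g$, one computes $\iota(S^g f) = (S^g f) \circ \pi = f \circ S^g \circ \pi = f \circ \pi \circ T^g = T^g \iota(f)$ for every $g \in G$. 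Thus $\iota(S^g f) = T^g(f \circ \pi)$.

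With these two facts the conclusion is immediate. If $f \in \ap(\mathbf{Y})$ then by definition the orbit $\{S^g f : g \in G\}$ has compact closure in $\lp^2(Y,\mu_Y)$, hence is totally bounded. Since $\iota$ is an isometry it preserves total boundedness, so the image $\iota(\{S^g f : g \in G\}) = \{T^g(f \circ \pi) : g \in G\}$ is totally bounded in $\lp^2(X,\mu_X)$. As $\lp^2(X,\mu_X)$ is complete, a totally bounded set has compact closure, so the orbit of $f \circ \pi$ is precompact and therefore $f \circ \pi \in \ap(\mathbf{X})$.

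There is no serious obstacle here: the only point requiring genuine care is the equivariance computation, which rests on the equivariance of $\pi$ and on the (right-action) convention for how $G$ acts on $\lp^2$. Everything else reduces to the elementary observation that an isometry carries totally bounded sets to totally bounded sets, and that in a complete space total boundedness is equivalent to precompactness.
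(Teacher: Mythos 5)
Your proof is correct and is essentially the paper's argument: the paper's entire proof is the one-line observation that $\pi$ induces an isometric embedding $\lp^2(Y,\mu_Y) \to \lp^2(X,\mu_X)$, and you have simply spelled out the equivariance computation and the preservation of total boundedness that make this work. The only cosmetic point worth noting is that $\ap(\mathbf{Y})$ is defined as the \emph{closed span} of the almost periodic functions, so strictly one passes from your statement about individual almost periodic functions to all of $\ap(\mathbf{Y})$ by linearity and continuity of the isometry $\iota$.
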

\begin{proof}
This follows from the fact that $\pi : \lp^2(Y,\mu_Y) \to \lp^2(X,\mu_X)$ is an isometric embedding.
\end{proof}

\begin{lemma}
\label{lema:lift-of-wm}
Let $\pi : \mathbf{X} \to \mathbf{Y}$ be a factor map of $G$ systems.
Let $f \in \wm(\mathbf{Y})$ and define $h=f \circ \pi$.
Then $h \in \wm(\mathbf{X})$.
\end{lemma}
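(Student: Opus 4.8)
The plan is to exploit the orthogonal Hilbert-space decomposition $\lp^2(X,\mu_X) = \ap(\mathbf{X}) \oplus \wm(\mathbf{X})$ coming from \cite[Corollary~4.12]{Leeuw_Glicksberg61} (already used in the proof of \cref{prop:K-factor-of-product}). Since this is an orthogonal direct sum, $\wm(\mathbf{X})$ is exactly the orthogonal complement of $\ap(\mathbf{X})$. Thus, writing $T$ and $S$ for the actions of $G$ on $\mathbf{X}$ and $\mathbf{Y}$, it suffices to show that $h = f \circ \pi$ is orthogonal to every $\varphi \in \ap(\mathbf{X})$.

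First I would use that the factor map $\pi$ realizes $\lp^2(Y,\mu_Y)$ as the subspace of $\lp^2(X,\mu_X)$ onto which $\E(\cdot|\mathbf{Y})$ is the orthogonal projection, so that $\bilin{f \circ \pi}{\varphi} = \bilin{f}{\E(\varphi|\mathbf{Y})}$ for every $\varphi \in \lp^2(X,\mu_X)$, where the first inner product is computed in $\lp^2(X,\mu_X)$ and the second in $\lp^2(Y,\mu_Y)$. Hence $\bilin{h}{\varphi} = \bilin{f}{\E(\varphi|\mathbf{Y})}$, and the task reduces to controlling $\E(\varphi|\mathbf{Y})$ when $\varphi$ is almost periodic.

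The key step is to show that $\E(\cdot|\mathbf{Y})$ maps $\ap(\mathbf{X})$ into $\ap(\mathbf{Y})$. This holds because conditional expectation is a norm-one operator that is $G$-equivariant, i.e.\ $\E(T^g \varphi | \mathbf{Y}) = S^g \E(\varphi | \mathbf{Y})$ for all $g \in G$ (as $\pi$ is measure-preserving and equivariant, $T^g$ preserves the sub-$\sigma$-algebra defining $\mathbf{Y}$). Consequently the orbit $\{ S^g \E(\varphi|\mathbf{Y}) : g \in G \}$ is the image, under the continuous contraction $\E(\cdot|\mathbf{Y})$, of the relatively compact orbit $\{ T^g \varphi : g \in G \}$, and is therefore itself relatively compact in $\lp^2(Y,\mu_Y)$. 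Thus $\E(\varphi|\mathbf{Y}) \in \ap(\mathbf{Y})$ whenever $\varphi \in \ap(\mathbf{X})$. This equivariance-plus-contractivity observation is the only substantive point; everything else is formal.

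Finally, since $f \in \wm(\mathbf{Y}) = \ap(\mathbf{Y})^{\perp}$, we obtain $\bilin{f}{\E(\varphi|\mathbf{Y})} = 0$, and hence $\bilin{h}{\varphi} = 0$, for every $\varphi \in \ap(\mathbf{X})$. As $\varphi$ was arbitrary, $h$ lies in $\ap(\mathbf{X})^{\perp} = \wm(\mathbf{X})$, which is the desired conclusion.
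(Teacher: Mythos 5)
Your proof is correct, but it follows a genuinely different route from the paper's. The paper argues directly from the definition of $\wm$: since $h=f\circ\pi$ lies in the embedded copy of $\lp^2(Y,\mu_Y)$, for any test vector $\xi\in\lp^2(X,\mu_X)$ one has $\bilin{T^g h}{\xi}=\bilin{T^g h}{\condex{\xi}{\mathbf{Y}}}$, so the hypothesis that $0$ lies in the weak closure of the orbit of $f$ transfers verbatim to the orbit of $h$. You instead use the orthogonality of the de Leeuw--Glicksberg splitting to identify $\wm(\mathbf{X})$ with $\ap(\mathbf{X})^{\perp}$ and reduce the lemma to the dual statement that $\condex{\cdot}{\mathbf{Y}}$ maps $\ap(\mathbf{X})$ into $\ap(\mathbf{Y})$, which you establish by the correct observation that a $G$-equivariant contraction sends precompact orbits to precompact orbits. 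Both arguments ultimately rest on the same adjunction $\bilin{f\circ\pi}{\varphi}=\bilin{f}{\condex{\varphi}{\mathbf{Y}}}$. Your version buys two things: it treats all of $\wm(\mathbf{Y})$ at once (the paper defines $\wm$ as the \emph{closure} of the set of flight vectors, and its computation as written only addresses flight vectors themselves, leaving the passage to the closure implicit), and it isolates a reusable fact --- conditional expectation onto a factor preserves almost periodicity --- which is the adjoint counterpart of \cref{lema:lift-of-ap}. The paper's version buys brevity and economy of hypotheses: it needs nothing beyond the raw definition of $\wm$ and, in particular, does not invoke the orthogonality of the decomposition $\lp^2=\ap\oplus\wm$.
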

\begin{proof}
We need to prove that 0 belongs to the weak closure of $\{ T^g h : g \in G \}$ in $\lp^2(X,\mu_X)$.
Notice that $\condex{T^g h}{\mathbf{Y}}=T^g\condex{h}{\mathbf{Y}}=T^gh$.
Fix $\xi$ in $\lp^2(X,\mu_X)$.
We have
\[
\bilin{T^g h}{\xi}
=
\bilin{\condex{T^g h}{\mathbf{Y}}}{\condex{\xi}{\mathbf{Y}}}
\]
so the fact that $f \in \wm(\mathbf{Y})$ implies that $\{ T^g h : g \in G \}$ contains 0 in its closure.
\end{proof}

\begin{proof}[Proof of \cref{thm:factors}]
Let $\pi_{\mathbf{Y},\kron\mathbf{Y}}$ and $\pi_{\mathbf{Z},\kron\mathbf{Z}}$ denote the factor maps from $\mathbf{Y}$ onto $\kron\mathbf{Y}$ and from $\mathbf{Z}$ onto $\kron\mathbf{Z}$ respectively.
Let $\lambda \in \joinings(\mathbf{Y},\mathbf{Z})$ be a joining of $\mathbf{Y}$ and $\mathbf{Z}$ with the property that $(\pi_{\mathbf{Y},\kron\mathbf{Y}}\times \pi_{\mathbf{Z},\kron\mathbf{Z}})(\lambda)=\mu_{\kron Y} \otimes\mu_{\kron Z}$.
Let $\mathbf{W}=(Y\times Z, S\times R, \lambda)$.
Our goal is to show that $\mathbf{W}=\mathbf{Y}\times\mathbf{Z}$, or equivalently, that $\lambda=\mu_Y\otimes \mu_Z$.

Observe that $\kron\mathbf{Y}$ is a factor of both $\mathbf{W}$ and $\kron\mathbf{X}$.
Hence we can consider the relatively independent joining $\kron\mathbf{X}\times_{\kron\mathbf{Y}}\mathbf{W}$ with corresponding measure $\mu_{\kron X}\otimes_{\kron\mathbf{Y}}\lambda$.
Note that the underlying space of $\kron\mathbf{X}\times_{\kron\mathbf{Y}}\mathbf{W}$ is  $\kron X \times Y \times Z$.
Let $\pi_{1,2} :\kron X \times Y \times Z \to \kron X \times Y$ denote the projection onto the first and second coordinates and let $\pi_3 : \kron X \times Y \times Z\to Z$ denote the projection onto the third coordinate.
Observe that $\pi_3$ is a factor map from $\kron\mathbf{X} \times_{\kron\mathbf{Y}}\mathbf{W}$ onto $\mathbf{Z}$ and $\pi_{1,2}$ is a factor map from $\kron\mathbf{X}\times_{\kron\mathbf{Y}}\mathbf{W}$ onto $\kron\mathbf{X}\times_{\kron\mathbf{Y}}\mathbf{Y}$, the relatively independent joining of $\kron\mathbf{X}$ with $\mathbf{Y}$ over $\kron\mathbf{Y}$.
This shows that $\mu_{\kron X} \otimes_{\kron\mathbf{Y}} \lambda \in \joinings(\kron \mathbf{X} \times_{\kron\mathbf{Y}} \mathbf{Y}, \mathbf{Z})$.

Let $\tau$ denote the factor map from $\mathbf{X}$ to $\kron\mathbf{X}\times_{\kron\mathbf{Y}}\mathbf{Y}$ in the proof of \cref{lem:rij}.
We can now apply \cref{cor:lift-of-joining} with $\mathbf{A}=\mathbf{X}$, $\mathbf{B}=\kron\mathbf{X}\times_{\kron\mathbf{Y}}\mathbf{Y}$ and $\mathbf{C}=\mathbf{Z}$ to find a joining $\rho\in \joinings(\mathbf{X},\mathbf{Z})$ with the property that $(\tau \times \id_{\mathbf{Z}})(\rho) = \mu_{\kron X}\otimes_{\kron\mathbf{Y}}\lambda$.
Let $\WW=(X\times Z,T\times R,\rho)$.

Let $\pi_{1,3} : \kron X \times Y \times Z \to \kron X \times Z$ denote the projection onto the first and third coordinates.
We claim that $\pi_{1,3}(\mu_{\kron X} \otimes_{\kron \mathbf{Y}} \lambda) = \mu_{\kron X} \otimes \mu_Z$.
This claim implies that the diagram
\[
\begin{tikzcd}
\WW
\arrow{rr}{\tau\times\id_{\mathbf{Z}}}
\arrow[swap]{rrrrdd}{\pi_{\mathbf{X},\kron\mathbf{X}}\times \pi_{\mathbf{Z},\kron\mathbf{Z}}}
&
&
\kron\mathbf{X}\times_{\kron\mathbf{Y}}\mathbf{W}
\arrow{rr}{\pionethree}
&
&
\kron\mathbf{X}\times \mathbf{Z}
\arrow{dd}{\id_{\kron\mathbf{X}}\times\pi_{\mathbf{Z},\kron\mathbf{Z}}}
\\
\\
&
&
&
&
\kron\mathbf{X}\times\kron\mathbf{Z}
\end{tikzcd}
\]
of factor maps commutes, giving $(\pi_{\mathbf{X},\kron \mathbf{X}} \times \pi_{\mathbf{Z},\kron \mathbf{Z}}) \rho = \mu_{\kron X} \otimes \mu_{\kron Z}$ whence $\rho = \mu_X \otimes \mu_Z$ because $\mathbf{X}$ and $\mathbf{Z}$ are assumed to be quasi-disjointness.
If follows that the measure $(\tau \times \id_{\mathbf{Z}})(\rho)$ on $\kron\mathbf{X}\times_{\kron\mathbf{Y}}\mathbf{W}$ is the product of the measures from $\kron\mathbf{X}\times_{\kron\mathbf{Y}}\mathbf{Y}$ and $\mathbf{Z}$ and hence $\lambda = \mu_Y \otimes \mu_Z$ as desired.

It remains to prove the claim.
Fix $f$ in $\ap(\mathbf{X})$ and $\phi$ in $\lp^2(Z,\mu_Z)$.
We have
\[
\int f \otimes \phi \intd \pi_{1,3} (\mu_{\kron X} \otimes_{\kron \mathbf{Y}} \lambda)
=
\int f \otimes 1 \otimes \phi \intd (\mu_{\kron X} \otimes_{\kron \mathbf{Y}} \lambda)
=
\int \condex{f}{\kron\mathbf{Y}} \condex{1 \otimes\phi}{\kron\mathbf{Y}} \intd \mu_{\kron Y}
\]
so it suffices to prove that
\begin{equation}
\label{eqn:condexGoal}
\condex{1 \otimes \phi}{\kron \mathbf{Y}} = \int \phi \d \mu_Z
\end{equation}
in $\lp^2(Y \times Z, \lambda)$.
Fix $\psi$ in $\ap(\mathbf{Y})$.
We have
\[
\int (\psi \otimes 1) (1 \otimes \phi) \d \lambda
=
\int (\psi \otimes 1) \condex{1 \otimes \phi}{\kron \mathbf{W}} \d \lambda
\]
because $\psi \otimes 1$ is in $\ap(\mathbf{W})$ by \cref{lema:lift-of-ap}.
But
\[
\condex{1 \otimes \phi}{\kron \mathbf{W}}
=
\condex{1 \otimes \condex{\phi}{\kron \mathbf{Z}}}{\kron \mathbf{W}}
\]
by Lemmas~\ref{lema:lift-of-ap} and \ref{lema:lift-of-wm} upon writing $\phi = \condex{\phi}{\kron \mathbf{Z}} + ( \phi - \condex{\phi}{\kron \mathbf{Z}})$.
So we calculate that
\begin{align*}
\int (\psi \otimes 1) (1 \otimes \phi) \d \lambda
&
=
\int (\psi \otimes 1) (1 \otimes \condex{\phi}{\kron \mathbf{Z}}) \d \lambda
\\
&
=
\int \psi \otimes \condex{\phi}{\kron \mathbf{Z}} \d \lambda
\\
&
=
\int \condex{\psi}{\kron \mathbf{Y}} \otimes \condex{\phi}{\kron \mathbf{Z}} \d \lambda
=
\int \psi \d \mu_Y \int \phi \d \mu_Z
\end{align*}
where we have used, in the last equality, the fact that $\lambda$ projects to the product joining of $\kron \mathbf{Y}$ and $\kron \mathbf{Z}$.
This establishes \eqref{eqn:condexGoal} and therefore the claim.
\end{proof}

\subsection{Quasi-disjointness is preserved by inverse limits}

\begin{theorem}
Let ${\mathbf X}$ and ${\mathbf Z}$ be $G$ systems and assume that ${\mathbf X}$ is the inverse limit of a sequence $n \mapsto {\mathbf X}_n$ of $G$ systems.
If each ${\mathbf X}_n$ is quasi-disjoint from ${\mathbf Z}$, then so is ${\mathbf X}$.
\end{theorem}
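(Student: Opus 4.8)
The plan is to take a joining $\lambda \in \joinings(\mathbf{X},\mathbf{Z})$ whose projection to $\kron\mathbf{X} \times \kron\mathbf{Z}$ is the product measure $\mu_{\kron X} \otimes \mu_{\kron Z}$ and to show that it must equal $\mu_X \otimes \mu_Z$. Writing $\pi_n : \mathbf{X} \to \mathbf{X}_n$ for the factor maps exhibiting $\mathbf{X}$ as the inverse limit, I would push $\lambda$ forward to $\lambda_n := (\pi_n \times \id_{\mathbf{Z}})(\lambda) \in \joinings(\mathbf{X}_n, \mathbf{Z})$. The strategy is to verify that each $\lambda_n$ projects to the product measure on $\kron\mathbf{X}_n \times \kron\mathbf{Z}$, to invoke the hypothesis that $\mathbf{X}_n$ is quasi-disjoint from $\mathbf{Z}$ to conclude $\lambda_n = \mu_{X_n} \otimes \mu_Z$, and then to use the defining property of the inverse limit to pass from the $\lambda_n$ back to $\lambda$.

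The first key step is to observe that $\kron\mathbf{X}_n$ is a factor of $\kron\mathbf{X}$ compatibly with $\pi_n$. Indeed, by \cref{lema:lift-of-ap} the pullback $f \mapsto f\circ\pi_n$ carries $\ap(\mathbf{X}_n)$ into $\ap(\mathbf{X})$, so $\pi_{\mathbf{X}_n,\kron\mathbf{X}_n}\circ\pi_n$ is an almost periodic factor of $\mathbf{X}$ and therefore factors through the maximal almost periodic factor: there is a factor map $\sigma_n : \kron\mathbf{X} \to \kron\mathbf{X}_n$ with $\sigma_n\circ\pi_{\mathbf{X},\kron\mathbf{X}} = \pi_{\mathbf{X}_n,\kron\mathbf{X}_n}\circ\pi_n$. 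Chasing the resulting commutative diagram gives
\[
(\pi_{\mathbf{X}_n,\kron\mathbf{X}_n}\times\pi_{\mathbf{Z},\kron\mathbf{Z}})(\lambda_n)
= (\sigma_n\times\id_{\kron\mathbf{Z}})(\mu_{\kron X}\otimes\mu_{\kron Z})
= \mu_{\kron X_n}\otimes\mu_{\kron Z},
\]
using that the pushforward of a product measure is the product of the pushforwards and that $\sigma_n(\mu_{\kron X})=\mu_{\kron X_n}$. Hence each $\lambda_n$ projects to the product on $\kron\mathbf{X}_n\times\kron\mathbf{Z}$, and quasi-disjointness of $\mathbf{X}_n$ and $\mathbf{Z}$ yields $\lambda_n=\mu_{X_n}\otimes\mu_Z$.

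Finally I would run the inverse-limit density argument. For any $f$ of the form $f_n\circ\pi_n$ with $f_n\in\lp^\infty(X_n,\mu_{X_n})$ and any $g\in\lp^\infty(Z,\mu_Z)$ one has
\[
\int f\otimes g\intd\lambda
= \int f_n\otimes g\intd\lambda_n
= \int f_n\intd\mu_{X_n}\int g\intd\mu_Z
= \int f\intd\mu_X\int g\intd\mu_Z .
\]
Since $\mathbf{X}$ is the inverse limit of the $\mathbf{X}_n$, functions of the form $f_n\circ\pi_n$ are dense in $\lp^2(X,\mu_X)$, and the bilinear form $(f,g)\mapsto\int f\otimes g\intd\lambda$ is continuous on $\lp^2(X,\mu_X)\times\lp^2(Z,\mu_Z)$ because the marginals of $\lambda$ are $\mu_X$ and $\mu_Z$. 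Passing to the limit extends the displayed identity to all $f\in\lp^2(X,\mu_X)$ and $g\in\lp^2(Z,\mu_Z)$, which gives $\lambda=\mu_X\otimes\mu_Z$ and completes the proof. I expect the only genuinely delicate point to be the compatibility in the second step, namely producing the factor map $\sigma_n:\kron\mathbf{X}\to\kron\mathbf{X}_n$ and checking that the relevant diagram commutes; once the projections of the $\lambda_n$ are identified, the remaining steps are routine.
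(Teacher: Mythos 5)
Your proposal is correct and follows essentially the same route as the paper's proof: push $\lambda$ forward to $\mathbf{X}_n\times\mathbf{Z}$, use that $\kron\mathbf{X}_n$ is a factor of $\kron\mathbf{X}$ to see the pushforward still projects to the product on the Kronecker factors, apply quasi-disjointness of $\mathbf{X}_n$ and $\mathbf{Z}$, and conclude by a density argument over $n$. The only cosmetic difference is that the paper finishes with a generating $\sigma$-algebra argument on rectangles $A\times B$ with $A\subset X_n$, whereas you use $\lp^2$-density of pullback functions; these are interchangeable, and your explicit construction of $\sigma_n$ just fills in a step the paper asserts without proof.
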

\begin{proof}
Fix a joining $\lambda$ of $\mathbf{X}$ and $\mathbf{Z}$ whose projection to a joining of $\kron \mathbf{X}$ with $\kron \mathbf{Z}$ is the product measure.
For every $n$ the system $\kron \mathbf{X}_n$ is a factor of $\kron \mathbf{X}$ so $\lambda$ projects to the product joining of $\kron \mathbf{X}_n$ with $\kron \mathbf{Z}$.
As $\mathbf{X}_n$ and $\mathbf{Z}$ are assumed quasi-disjoint the projection of $\lambda$ to a joining of $\mathbf{X}_n$ with $\mathbf{Z}$ is the product measure $\mu_{X_n} \otimes \mu_Z$.
Therefore $\lambda(A \times B) = \mu_X(A) \mu_Z(B)$ for all measurable $A \subset X_n$ and all $B \subset Z$ for all $n \in \mathbb{N}$.
In other words $\lambda$ and $\mu_X \otimes \mu_Z$ agree on all measurable sets of the form $A \times B$ where $A \subset X_n$ for some $n \in \mathbb{N}$ and $B \subset Z$.
Since the $\sigma$-algebra generated by such sets is the Borel $\sigma$-algebra on $X \times Z$ we must have $\lambda = \mu_X \otimes \mu_Z$ as desired.
\end{proof}

\subsection{An example}

It follows from \cref{thm:bergWeylDisjoint} that any ergodic measurably distal $\Z$ system is quasi-disjoint from itself.
However, the converse is not true.

\begin{example}
There exists an ergodic $\mathbb{Z}$ system $\mathbf{X}$ which is not measurably distal but is quasi-disjoint from itself and every other ergodic system.
\end{example}
\begin{proof}
In \cite[Theorem 2.2]{Glasner_Weiss89} Glasner and Weiss construct a continuous $\mathbb{Z}$ action on a compact, metric space $X$ that is minimal and uniquely ergodic, but for which the corresponding $\mathbb{Z}$ system $(X,T,\mu_X)$ is not measurably distal.
Moreover, they prove for their system that the only invariant measure on the set $\tilde X=\{\mu\in{\mathcal M}(X):\pi_{{\mathbf X},\kron{\mathbf X}}(\mu)=\mu_{\kron{\mathbf X}}\}$ is the Dirac measure $\delta_{\mu_X}$ at the unique invariant measure $\mu_X\in\tilde X$ of $X$.

Given any ergodic system ${\mathbf Y}$, let $\lambda\in\ergjoinings({\mathbf X},{\mathbf Y})$ and assume that $\lambda$ projects to the product measure in $\kron{\mathbf X}\times\kron{\mathbf Y}$.
Let $\lambda=\int_Y\lambda_y\d\mu_Y(y)$ be the disintegration of $\lambda$ with respect to the factor map $(X\times Y,\lambda)\to(Y,\mu_Y)$.
Observe that
$$\mu_{\kron{\mathbf X}}\otimes\mu_{\kron{\mathbf Y}}=\int_Y\pi_{{\mathbf X},\kron{\mathbf X}}(\lambda_y)\otimes\pi_{{\mathbf Y},\kron{\mathbf Y}}(\delta_y)\d\mu_Y(y)$$ and so
$\pi_{{\mathbf X},\kron{\mathbf X}}(\lambda_y)=\mu_{\kron{\mathbf X}}$, which implies that $\lambda_y\in\tilde X$.

Let $\nu$ be the measure on $\tilde X$ obtained as the pushforward of $\mu_Y$ by the map $y\mapsto\lambda_y$. Since $\nu$ is invariant, it follows that $\nu=\delta_{\mu_X}$, and therefore $\lambda=\mu_X\otimes\mu_Y$.
We conclude that ${\mathbf X}$ and ${\mathbf Y}$ are quasi-disjoint as desired.
\end{proof}

\section{Proof of Theorem~\ref{thm:wienerWintner}}
\label{sec:wienerWintner}

In this section we prove \cref{thm:wienerWintner}.

\begin{proof}
[Proof of \cref{thm:wienerWintner}]
In view of a version for amenable groups of the Jewett-Krieger theorem~\cite{Rosenthal86}, we can assume that $\mathbf{Y}$ is uniquely ergodic.
Fix $(X,T,\mu_X)$ Kronecker disjoint from $(Y,S,\mu_Y)$, a function $f\in\cont(X)$ and a point $x\in X$.
Kronecker disjointness together with Theorem~\ref{thm:main-result} implies that the sequence
\[
\frac{1}{|\Phi_N|} \sum_{g \in \Phi_N} \delta_{(T^g x,S^g y)}
\]
of measures converges to $\mu_X \otimes \mu_Y$ for every $y\in Y$.
This implies that we can take $Y' = Y$ when $\phi$ is continuous.

For the general case, fix $\phi$ in $\lp^1(Y,\mu_Y)$.
Let $k \mapsto \phi_k$ be a sequence in $\cont(Y)$ with $\phi_k \to \phi$ in $\lp^1(Y,\mu_Y)$.
Let $Y'$ be the set of points $y\in Y$ such that
\begin{equation}
\label{eq_WW.0}
\lim_{N\to\infty} \frac{1}{|\Phi_N|} \sum_{g \in \Phi_N} \big|\phi_k(S^gy)-\phi(S^gy)\big|
=
\norm{\phi_k-\phi}{1}
\end{equation}
for all $k \in \mathbb{N}$.
In view of Lindenstrauss' pointwise ergodic theorem~\cite{MR1865397}, we have that $\mu_Y(Y')=1$.
Next, let $y\in Y'$, let $f\in \cont(X)$, $x\in X$ and let $k,N\in\N$.
By rescaling, assume that $\sup \{ f(x) : x \in X \} \leq 1$.
We have
\begin{gather}
\label{eq_WW.1}
\left| \frac{1}{|\Phi_N|} \sum_{g \in \Phi_N} f(T^g x)\phi_k(S^g y) - \frac{1}{|\Phi_N|} \sum_{g \in \Phi_N} f(T^g x)\phi(S^g y) \right| \leq
\frac1{|\Phi_N|}\sum_{g\in \Phi_N}\big|\phi_k(S^gy)-\phi(S^gy)\big|
\\
\label{eq_WW.2}
\lim_{N\to\infty}\frac{1}{|\Phi_N|} \sum_{g \in \Phi_N} f(T^g x)\phi_k(S^g y) = \int_Xf\d\mu_X\int_Y\phi_k\d\mu_Y
\end{gather}
and putting \eqref{eq_WW.0}, \eqref{eq_WW.1} and \eqref{eq_WW.2} together we obtain
\[
\limsup_{N\to\infty} \left|\frac{1}{|\Phi_N|} \sum_{g\in \Phi_N} f(T^g x) \phi(S^g y) -\int_X f \d \mu_X \int_Y\phi_k \d \mu_ Y \right|
\leq
\norm{\phi_k-\phi}{1}
\]
for every $k\in\N$.
Since $\phi_k\to\phi$ in $\lp^1$ we obtain the desired result.
\end{proof}

We now see how to derive the classical Wiener-Wintner theorem from \cref{thm:wienerWintner}.
\begin{corollary}[Wiener-Wintner theorem]
Let ${\mathbf X}=(X,T,\mu_X)$ be a $\Z$ system and let $f\in\lp^1(X,\mu_X)$.
There exists a set $X_0\subset X$ with $\mu(X_0)=1$ such that for every $\alpha\in\R$ and every $x\in X_0$ the limit
\begin{equation}\label{eq_WWclassical}
\lim_{N\to\infty} \frac{1}{N} \sum_{n=1}^Nf(T^nx)e(n\alpha)
\end{equation}
exists.
\end{corollary}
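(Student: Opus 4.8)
The plan is to apply \cref{thm:pointwise-ET-with-distal-weights} with the two systems in the roles opposite to those suggested by its statement: the given system $\mathbf{X}$ will play the part of the arbitrary ergodic system (carrying the $\lp^1$ weight $f$), while the distal systems furnishing the ``distal weights'' will be irrational circle rotations, whose continuous eigenfunctions produce the exponentials $e(n\alpha)$. I first reduce to the case that $\mathbf{X}$ is ergodic: the set of $x$ at which the limit \eqref{eq_WWclassical} fails to exist for some $\alpha$ is the projection to $X$ of the Borel set $\{(x,\alpha) : \limsup_{M,N}|A_N(x,\alpha)-A_M(x,\alpha)|>0\}$, where $A_N(x,\alpha)=\frac1N\sum_{n=1}^N f(T^nx)e(n\alpha)$; this projection is analytic, hence universally measurable, so by the ergodic decomposition it suffices to show it is null for almost every ergodic component of $\mu_X$. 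Assuming then that $\mathbf{X}$ is ergodic, I take $G=\Z$ with the (tempered) \Folner{} sequence $\Phi_N=\{1,\dots,N\}$, set $\mathbf{Y}=\mathbf{X}$ and $\phi=f$, and let $X_0\subset X$ be the conull set produced by \cref{thm:pointwise-ET-with-distal-weights}. The decisive feature is that $X_0$ depends on $f$ alone and not on any frequency.

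Next I use $X_0$ to handle almost all frequencies at once. For irrational $\alpha$ the topological system $(\T,R_\alpha)$, with $R_\alpha t = t+\alpha$, is uniquely ergodic with Haar measure as its unique invariant measure, and the associated system is a compact group rotation, hence measurably distal; the continuous function $h(t)=e(t)$ and the base point $t=0$ give $h(R_\alpha^n 0)=e(n\alpha)$. Whenever $(\T,R_\alpha)$ is moreover Kronecker disjoint from $\mathbf{X}$, \cref{thm:pointwise-ET-with-distal-weights} applies verbatim and yields, for every $x\in X_0$,
\[
\lim_{N\to\infty}\frac1N\sum_{n=1}^N f(T^nx)\,e(n\alpha)=\int_\T e(t)\intd t\int_X f\intd\mu_X=0,
\]
so in particular the limit \eqref{eq_WWclassical} exists. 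The Kronecker factor of $(\T,R_\alpha)$ has eigenvalue group $\{e(n\alpha):n\in\Z\}$, so Kronecker disjointness from $\mathbf{X}$ holds as soon as this group meets $\Eig(\mathbf{X})$ only in $\{1\}$; since $\Eig(\mathbf{X})$ is countable, this can fail for at most countably many $\alpha$.

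It remains to treat the countable set $B$ of exceptional frequencies, consisting of the rationals together with the countably many irrationals for which the disjointness above fails. For a fixed $\alpha\in B$ I apply the pointwise ergodic theorem to the product system $\mathbf{X}\times(\T,R_\alpha,m)$, with $m$ denoting Haar measure, and to the integrable function $F(x,t)=f(x)\overline{e(t)}$: since $F\big((T\times R_\alpha)^n(x,t)\big)=\overline{e(t)}\,f(T^nx)\,\overline{e(n\alpha)}$, Birkhoff's theorem together with Fubini produces a conull set $X_\alpha\subset X$ on which $\frac1N\sum_{n=1}^N f(T^nx)e(n\alpha)$ converges. Intersecting the $X_\alpha$ over the countably many $\alpha\in B$ yields a conull set $X_1$, and $X_0\cap X_1$ is then a conull subset of (the given ergodic component of) $X$ on which \eqref{eq_WWclassical} converges for every $\alpha\in\R$. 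The only genuine obstacle is the countable family of resonant frequencies at which the rotation shares discrete spectrum with $\mathbf{X}$ and the hypothesis of \cref{thm:pointwise-ET-with-distal-weights} breaks down; this is exactly what the elementary Birkhoff argument disposes of, its dependence on $\alpha$ being harmless because $B$ is countable.
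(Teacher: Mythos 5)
Your proof is correct and follows essentially the same route as the paper's: apply \cref{thm:pointwise-ET-with-distal-weights} with $\mathbf{Y}=\mathbf{X}$ to get a single conull set working for all frequencies whose rotation is Kronecker disjoint from $\mathbf{X}$, and dispose of the countably many resonant frequencies by Birkhoff's theorem on the product system. The only differences are cosmetic improvements in rigor on your part: you make explicit the reduction to the ergodic case (which the paper's statement omits even though \cref{thm:pointwise-ET-with-distal-weights} requires ergodicity of $\mathbf{Y}$) and you place the rational $\alpha$ in the Birkhoff-handled exceptional set, sidestepping the fact that $(\T,R_\alpha)$ is not uniquely ergodic for rational $\alpha$.
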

\begin{proof}
Denote by $\T$ the circle $\R/\Z$.
For each $\alpha\in\R$, let $R_\alpha:\T\to\T$ be the rotation $R_\alpha:t\mapsto t+\alpha$.
The pointwise ergodic theorem of Birkhoff applied to the system $(X\times\T,T\times R_\alpha)$ implies that there exists a set $X_\alpha\subset X$ with full measure such that, for every $x\in X_\alpha$, the limit \eqref{eq_WWclassical} exists.
The discrete spectrum $\Eig({\mathbf X})$ of $\mathbf{X}$ (cf.\ \cref{def:discrete-spectrum}) is at most countable, so the intersection $X_1=\bigcap_{\alpha\in\Eig({\mathbf X})}X_\alpha$ still has full measure.

Next let $X_2\subset X$ be the full measure set given by \cref{thm:wienerWintner} applied to ${\mathbf X}$ in place of ${\mathbf Y}$.
Since for every $\alpha\notin\Eig({\mathbf X})$ the systems ${\mathbf X}$ and $(\T,R_\alpha)$ are Kronecker disjoint, it follows that for every $x\in X_2$ the limit in \eqref{eq_WWclassical} exists. Therefore the limit exists for every $\alpha\in\R$ and every $x\in X_0=X_1\cap X_2$.
\end{proof}

\section{Applications to multicorrelation sequences and multiple recurrence}
\label{sec:multicorrelations}

In this section we present applications of our main results to the theory of multiple recurrence. In particular, this section contains proofs of Theorems \ref{thm:product-system-orthogonality} and \ref{thm:mrww-1}.
We remark that Theorems \ref{thm:product-system-orthogonality} and \ref{thm:mrww-1} only concern measure-preserving $\Z$ systems. The analogues for more general groups $G$ remain open. We formulate some open questions in this direction in \cref{sec:open-Q}.

\subsection{Preliminaries on nilmanifolds}
\label{sec:prelims-on-nil}

In \cite{Host_Kra05} Host and Kra established a structure theorem for multiple ergodic averages which revealed a deep connection between multi-correlation sequences and single-orbit dynamics on compact nilmanifolds.
In the proofs of Theorems \ref{thm:product-system-orthogonality} and \ref{thm:mrww-1} we make use of refinements of the Host-Kra structure theorem which appeared in \cite{Host_Kra09,Bergelson_Host_Kra05,Moreira_Richter18,Moreira_Richter21}.
The purpose of this subsection is to give an overview of these results and some related methods that we will use in the subsequent sections.

We begin with the definition of a nilmanifold.
A closed subgroup $\Gamma$ of $G$ is called \define{uniform} if $G/\Gamma$ is compact or, equivalently, if there exists a compact set $K$ such that $K\Gamma = G$.
Let $G$ be a $k$-step nilpotent Lie group and let $\Gamma\subset G$ be a uniform and discrete subgroup of $G$.
The quotient space $G/\Gamma$ is called a \define{nilmanifold}.
Naturally, $G$ acts continuously and transitively on $G/\Gamma$ via left-multiplication, that is $a(g\Gamma)=(ag)\Gamma$ for all $a\in G$ and all $g\Gamma\in G/\Gamma$.
On any nilmanifold $G/\Gamma$ there exists a unique $G$ invariant Borel probability measure called the \define{Haar measure of $G/\Gamma$}, which we denote by $\mu_{G/\Gamma}$ (cf.\ \cite[Lemma 1.4]{Raghunathan72}).
Given a fixed group element $a\in G$ the map $R:G/\Gamma\to G/\Gamma$ defined by $R(x)=ax$ for all $x=g\Gamma\in G/\Gamma$ is a \define{niltranslation} and the resulting  $\Z$ system $(G/\Gamma,R,\mu_{G/\Gamma})$ is called a \define{k-step nilsystem}.
We remark that $(G/\Gamma,R,\mu_{G/\Gamma})$ is ergodic if and only if $R$ acts transitively on $G/\Gamma$; as a matter of fact, if $R$ is transitive then $\mu_{G/\Gamma}$ is the unique $R$ invariant Borel probability measure on $G/\Gamma$ (cf.\ \cite{Auslander_Green_Hahn63,Parry69}).
Finally, for any $x\in G/\Gamma$ the orbit closure $Y=\overline{\{R^nx:n\in\Z\}}\subset G/\Gamma$ is a \define{sub-nilmanifold} of $G/\Gamma$, meaning that there exists a closed subgroup $H$ of $G$ such that $a\in H$, $Y=Hx$ and $\Lambda=H\cap \Gamma$ is a uniform and discrete subgroup of $H$.
In this case there exists a unique $H$ invariant Borel probability measure $\mu_{Y}$ on $Y$, called the Haar measure of the sub-nilmanifold $Y$, and the system $(Y,R, \mu_Y)$ is a nilsystem, as it is isomorphic to $(H/\Lambda,R,\mu_{H/\Lambda})$ (cf.\ \cite{Leibman06}).
For more information on nilmanifolds and nilsystems we refer the reader to \cite{Auslander_Green_Hahn63,Parry69,Parry70,Raghunathan72}.



{
\begin{theorem}[see {\cite[Revised Theorem 7.1]{Moreira_Richter21}}]\label{thm:3.4second}
Let $k\in\N$, let ${\mathbf X}=(G/\Gamma,R,\mu_{G/\Gamma})$ be an ergodic nilrotation and assume that $G/\Gamma$ is connected.
Define $S= R\times R^2\times \ldots\times R^k$ and for every $x\in G/\Gamma$ consider the sub-nilmanifold $\Omega({\mathbf X},x)$ of $(G/\Gamma)^{k}=G^k/\Gamma^{k}$ defined as
\[
\Omega({\mathbf X},x)=\overline{\big\{S^n(x,x,\ldots, x): n\in\Z\big\}}.
\]
Let $\mu_{\Omega({\mathbf X},x)}$ denote the Haar measure on $\Omega({\mathbf X},x)$ and let $\theta\in[0,1)$.
If 
$e(\theta)\notin \Eig(\mathbf{X})$ then for almost every $x\in G/\Gamma$, $e(\theta)\notin\Eig(\Omega({\mathbf X},x),S,\mu_{\Omega({\mathbf X},x)})$.
\end{theorem}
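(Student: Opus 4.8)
The plan is to prove the two inclusions separately: $\Eig(\mathbf X)\subseteq\Eig(\Omega(\mathbf X,x),S,\mu_{\Omega(\mathbf X,x)})$ for \emph{every} $x$, and the reverse inclusion for almost every $x$. First I would reduce to the case that $G$ is connected and simply connected (so $G_2=[G,G]$ is closed and connected and $\Gamma$ is cocompact), and recall the standard description of the spectrum of a connected ergodic nilsystem: the Kronecker factor of $\mathbf X$ is the rotation by $\bar a$ on the maximal torus $T_\mathbf X=G/(G_2\Gamma)\cong\T^m$, every eigenfunction is a character of $T_\mathbf X$ composed with the projection $G/\Gamma\to T_\mathbf X$, and hence $\Eig(\mathbf X)=\{\chi(\bar a):\chi\in\widehat{T_\mathbf X}\}$. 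Since the torus rotation by $\bar a$ is ergodic it is automatically totally ergodic, so $\langle\xi,\bar a\rangle$ is irrational for every nonzero $\xi\in\widehat{T_\mathbf X}$; in particular $\Eig(\mathbf X)$ contains no nontrivial root of unity, a fact I record for later use.

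For the easy inclusion, given $\zeta\in\Eig(\mathbf X)$ with eigenfunction $f$ (so $f\circ R=\zeta f$), I set $F(z_1,\dots,z_k)=f(z_1)$. Then $F\circ S=\zeta F$, and since $\mathbf X$ is minimal the first-coordinate projection maps $\Omega(\mathbf X,x)$ onto $G/\Gamma$ and pushes $\mu_{\Omega(\mathbf X,x)}$ forward to $\mu_{G/\Gamma}$; thus $F$ restricts to a nonzero eigenfunction on $\Omega(\mathbf X,x)$ with eigenvalue $\zeta$. This yields $\Eig(\mathbf X)\subseteq\Eig(\Omega(\mathbf X,x),S,\mu_{\Omega(\mathbf X,x)})$ for every $x$.

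The substance is the reverse inclusion. The system $(\Omega(\mathbf X,x),S,\mu_{\Omega(\mathbf X,x)})$ is itself an ergodic nilsystem, so its eigenfunctions are characters of the underlying homogeneous space and its Kronecker factor is its own maximal torus. The heart of the argument is the claim that, for almost every $x$, this maximal torus is \emph{horizontal}: it equals the image of $\Omega(\mathbf X,x)$ under the ambient projection $\mathrm{pr}\colon(G/\Gamma)^k\to T_\mathbf X^k\cong\T^{mk}$. Granting this, every eigenfunction of $\Omega(\mathbf X,x)$ is the restriction of a character of $\T^{mk}$, and the rest is an abelianised computation. The image $\mathrm{pr}(\Omega(\mathbf X,x))$ is the orbit closure of $\mathrm{pr}(\Delta x)$ under rotation by $B=(\bar a,2\bar a,\dots,k\bar a)$, namely a coset of the closed subgroup $\mathcal T=\overline{\{nB:n\in\Z\}}\le\T^{mk}$, so the eigenvalues of $\Omega(\mathbf X,x)$ are $\{\psi(B):\psi\in\widehat{\mathcal T}\}$. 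As $B\in\mathcal T$, every such value equals $\eta(B)$ for some $\eta=(\eta_1,\dots,\eta_k)\in\widehat{\T^{mk}}$ extending $\psi$, and
\[
\eta(B)=\prod_{j=1}^k\eta_j(j\bar a)=\Big(\textstyle\sum_{j=1}^k j\,\eta_j\Big)(\bar a);
\]
as $\eta$ ranges over $\widehat{\T^{mk}}$ the combination $\sum_j j\eta_j$ ranges over all of $\widehat{T_\mathbf X}$, so these values fill out exactly $\{\chi(\bar a):\chi\in\widehat{T_\mathbf X}\}=\Eig(\mathbf X)$.

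The hard part is the horizontality claim, and this is precisely where the ``almost every $x$'' is needed: for special (for instance rational) $x$ the orbit closure can be a proper, possibly disconnected, sub-nilmanifold whose maximal torus acquires extra eigenvalues — nontrivial roots of unity coming from a finite component group, or characters in commutator directions — none of which lie in $\Eig(\mathbf X)$. To control this I would invoke Leibman's orbit-closure and equidistribution theory for polynomial orbits on nilmanifolds \cite{Leibman06}: there is a fixed closed \emph{connected} subgroup $G^\ast\le G^k$ containing $b=(a,a^2,\dots,a^k)$ such that $\Omega(\mathbf X,x)=G^\ast\Delta x$, carrying its Haar measure, for all $x$ outside a countable union of proper sub-nilmanifolds, hence for almost every $x$. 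For such generic $x$ the structure group $G^\ast$ is connected, so $\Omega(\mathbf X,x)$ is connected and carries no root-of-unity eigenvalues; it then remains to check that every character of $G^\ast$ trivial on the relevant lattice is the restriction of a character of $T_\mathbf X^k$, equivalently that $G^\ast\cap[G,G]^k$ lies in the closure of the product of $[G^\ast,G^\ast]$ with the lattice. Establishing this alignment of the intrinsic and ambient tori for the generic group $G^\ast$ is the technical crux; once it is in place the abelianised computation above closes the argument.
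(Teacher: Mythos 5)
A preliminary remark: the paper does not actually prove this statement --- it is imported from \cite[Theorem~7.1]{Moreira_Richter18}, and only the extension to disconnected nilmanifolds (\cref{thm:3.4second-star}) is proved here. So your proposal must be judged on its own terms. Its outer layers are sound: the reduction to $G$ connected and simply connected; the identification $\Eig(\mathbf{X})=\{\chi(\bar a):\chi\in\widehat{T_{\mathbf{X}}}\}$ with $T_{\mathbf{X}}=G/([G,G]\Gamma)$ the maximal factor torus; the observation that an ergodic torus rotation is totally ergodic; the pullback of eigenfunctions through the first coordinate (using minimality and unique ergodicity of $\mathbf{X}$) to get $\Eig(\mathbf{X})\subseteq\Eig(\Omega(\mathbf{X},x),S,\mu_{\Omega(\mathbf{X},x)})$ for every $x$; and the abelianised computation showing that the horizontal torus $\overline{\{nB:n\in\Z\}}\le\T^{mk}$ with $B=(\bar a,2\bar a,\dots,k\bar a)$ contributes only eigenvalues of the form $\bigl(\sum_{j}j\eta_j\bigr)(\bar a)\in\Eig(\mathbf{X})$.

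The argument is nevertheless incomplete, and you say so yourself. Everything reduces to the ``horizontality'' claim that for almost every $x$ the maximal torus of the nilsystem $(\Omega(\mathbf{X},x),S)$ coincides with the image of $\Omega(\mathbf{X},x)$ in $T_{\mathbf{X}}^{k}$ --- equivalently, writing $\Omega(\mathbf{X},x)=G^{*}\Delta x$ with lattice $\Lambda^{*}$, that $G^{*}\cap[G,G]^{k}$ is contained in the closure of $[G^{*},G^{*}]\Lambda^{*}$. That claim \emph{is} the theorem: it is exactly the assertion that $\Omega(\mathbf{X},x)$ carries no eigenfunctions living in commutator directions of $G^{k}$, and nothing you invoke delivers it. Leibman's orbit-closure theory gives a connected structure group $G^{*}$ valid for all $x$ off a null set, which rules out root-of-unity eigenvalues, but it does not compute $[G^{*},G^{*}]$ inside $[G,G]^{k}$; for that one needs the explicit Hall--Petresco-type description of $G^{*}$ (as in \cite{Bergelson_Host_Kra05} or Ziegler's work on the diagonal orbit under $a\times a^{2}\times\cdots\times a^{k}$) followed by a genuine verification that the intrinsic and ambient tori align. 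As written, your proposal is a correct and well-organised reduction of the theorem to this unproved structural lemma, not a proof of the theorem.
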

}

For our purposes we need a generalization of \cref{thm:3.4second} that holds for nilmanifolds $G/\Gamma$ that are not necessarily connected.


{
\begin{theorem}
\label{thm:3.4second-star}
Let $k\in\N$ and let ${\mathbf Z}=(G/\Gamma,R,\mu_{G/\Gamma})$ be an ergodic nilsystem. Define $S= R\times R^2\times \ldots\times R^k$ and
\[
\Omega({\mathbf Z},x)=\overline{\big\{S^n(x,x,\ldots, x): n\in\Z\big\}}
\subset (G/\Gamma)^{k}.
\]
For any $\theta\in[0,1)$, if 
$e(\theta)\notin \Eig({\mathbf Z})$ then for almost every $x\in G/\Gamma$, $e(\theta)\notin\Eig(\Omega({\mathbf Z},x),S,\mu_{\Omega({\mathbf Z},x)})$.
\end{theorem}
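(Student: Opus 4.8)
The plan is to reduce the general (disconnected) case to the connected case treated in \cref{thm:3.4second} by exploiting the cyclic action of $R$ on the connected components of $G/\Gamma$, and then matching the two discrete spectra through a bookkeeping of $m$-th roots of unity. Write $G^\circ$ for the identity component of $G$; after replacing $G$ by the subgroup generated by $G^\circ$ and the translating element $a$ (which leaves the system unchanged) we may assume $G/G^\circ\Gamma$ is finite cyclic of order $m$, so that $G/\Gamma$ has exactly $m$ connected components $X_0,\dots,X_{m-1}$, with $X_0 = G^\circ\Gamma/\Gamma$ a connected nilmanifold. Ergodicity of $R$ forces it to permute these as a single $m$-cycle, $R X_j = X_{j+1 \bmod m}$; hence $R^m$ preserves each $X_j$ and, by the standard cyclic-extension argument, $(X_j, R^m)$ is ergodic. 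It is classical that $(X_0,R^m)$ is then an ergodic nilsystem on a \emph{connected} nilmanifold, and that the $(X_j,R^m)$ are mutually isomorphic via the powers of $R$, hence isospectral.

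The heart of the argument is a cyclic-extension lemma for discrete spectra, which I would state once and apply twice. Suppose an ergodic system $(W,U,\nu)$ is partitioned, modulo null sets, into $m$ sets of equal measure that $U$ cyclically permutes as a single $m$-cycle, with $U^m$ ergodic on each piece $W_j$. Then $W_j \mapsto j$ is a factor map onto the rotation on $\Z/m\Z$, so $\mu_m \subseteq \Eig(W,U)$, where $\mu_m$ denotes the group of $m$-th roots of unity; moreover $\Eig(W,U)^m = \Eig(W_0, U^m)$. The inclusion $\subseteq$ comes from restricting a modulus-one eigenfunction to $W_0$, and $\supseteq$ from \emph{inducing}: an eigenfunction $h$ on $W_0$ with $h\circ U^m = \eta h$ extends to $F$ on $W$ by $F|_{W_j} = \omega^{j}\, h\circ U^{-j}$ for a fixed $\omega$ with $\omega^m=\eta$, and the single $m$-cycle makes $F$ well defined with $F\circ U = \omega F$.

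I would apply this lemma first to $(G/\Gamma, R)$ with pieces $X_j$, obtaining $\mu_m \subseteq \Eig(\mathbf{Z})$ and $\Eig(\mathbf{Z})^m = \Eig(X_0, R^m)$. Next, fixing $x$, I analyze $\Omega(\mathbf{Z},x)$ under $S$: projecting each coordinate to $G/G^\circ\Gamma \cong \Z/m\Z$ sends the $S$-orbit of $(x,\dots,x)$ to the $\langle(1,2,\dots,k)\rangle$-orbit of a point in $(\Z/m\Z)^k$, which has exactly $m$ elements because the first coordinate already has order $m$. Thus $\Omega(\mathbf{Z},x)$ splits, modulo null sets for its ($S$-ergodic) Haar measure, into $m$ pieces cyclically permuted by $S$, the piece through $(x,\dots,x)$ being $\Omega':=\overline{\{(S^m)^n(x,\dots,x):n\in\Z\}}$; and $S^m = R^m\times (R^m)^2\times\cdots\times(R^m)^k$ is precisely the transformation ``$S$'' attached to the nilsystem $(X_0,R^m)$, so $\Omega'=\Omega((X_0,R^m),x)$. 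The lemma again gives $\mu_m \subseteq \Eig(\Omega(\mathbf{Z},x),S)$ and $\Eig(\Omega(\mathbf{Z},x),S)^m = \Eig(\Omega', S^m)$. Now \cref{thm:3.4second} applied to the connected ergodic nilsystem $(X_0,R^m)$ yields $\Eig(X_0,R^m) = \Eig(\Omega', S^m)$ for a.e.\ $x\in X_0$. Chaining the four equalities gives $\Eig(\mathbf{Z})^m = \Eig(\Omega(\mathbf{Z},x),S)^m$, while both groups contain $\mu_m$; since two subgroups $A,B\le\T$ with $\mu_m\subseteq A\cap B$ and $A^m=B^m$ must coincide (if $a\in A$ then $a^m=b^m$ for some $b\in B$, whence $a/b\in\mu_m\subseteq B$), we conclude $\Eig(\mathbf{Z}) = \Eig(\Omega(\mathbf{Z},x),S)$ for a.e.\ $x\in X_0$. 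Running the same argument over each component $X_j$, using the isospectral systems $(X_j,R^m)$, upgrades this to a.e.\ $x\in G/\Gamma$.

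The step I expect to be the main obstacle is justifying cleanly that $(X_0, R^m)$ is an ergodic nilsystem on a connected nilmanifold so that \cref{thm:3.4second} genuinely applies: the restriction of $R^m$ to $X_0$ is a priori only an affine map (a translation composed with the automorphism given by conjugation by a lattice element, using normality of $G^\circ$), and one must invoke the structure theory of nilsystems to realize it as a genuine niltranslation. The remaining work — the cyclic-extension eigenvalue lemma and the root-of-unity bookkeeping — is elementary, but it must be carried out so that both applications use the \emph{same} integer $m$, which is exactly what the presence of the first factor $R^1$ in $S$ guarantees.
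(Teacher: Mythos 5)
Your proposal is correct and takes essentially the same route as the paper: decompose $G/\Gamma$ into its $m$ connected components, which $R$ permutes cyclically, apply \cref{thm:3.4second} to the connected ergodic component system $(X_0,R^m)$, identify $\Omega(\mathbf{Z},x)$ as the disjoint union of the $m$ translates $S^i\Omega((X_0,R^m),x)$, and reconcile the two spectra through $m$-th roots of unity (the paper performs this last step by computing both spectra explicitly, using total ergodicity of the components, rather than via your subgroup cancellation lemma, but the content is the same). The obstacle you flag does not actually arise: writing $X_0=G^\circ\Gamma/\Gamma$ as a connected nilmanifold of the open subgroup $G^\circ\Gamma$, which contains $a^m$, exhibits $R^m|_{X_0}$ as a genuine left niltranslation with no affine correction needed.
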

}

To derive \cref{thm:3.4second-star} from \cref{thm:3.4second} we need the following well-known lemma regarding nilsystems.

\begin{lemma}
[cf.\ \cite{Auslander_Green_Hahn63, Parry69,Leibman05}]
\label{thm:dynamics-nilrotation}
Suppose ${\mathbf Z}=(G/\Gamma,R,\mu_{G/\Gamma})$ is a nilsystem. Then the following are equivalent:
\begin{enumerate}	
[label=(\roman*),ref=(\roman*),leftmargin=*]
\item
$G/\Gamma$ is connected and ${\mathbf Z}$ is ergodic;
\item
${\mathbf Z}$ is totally ergodic.
\end{enumerate}
\end{lemma}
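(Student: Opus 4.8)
The plan is to recast total ergodicity spectrally and then handle the two implications separately. Recall that an ergodic system fails to be totally ergodic precisely when some power $R^n$ ($n\geq 2$) is non-ergodic, and for an ergodic system this happens exactly when there is an eigenvalue $\lambda\in\Eig(\mathbf{Z})$ with $\lambda\neq 1$ and $\lambda^n=1$. Thus the whole lemma reduces to showing that, for a \emph{nilsystem}, ergodicity together with connectedness of $G/\Gamma$ is equivalent to ergodicity together with the absence of nontrivial roots of unity in $\Eig(\mathbf{Z})$. I would organize everything around this reformulation, since it converts a statement about all powers of $R$ into a single statement about the spectrum.

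For the implication (ii)$\Rightarrow$(i) I would argue through the connected components. Since $G/\Gamma$ is a compact manifold it has finitely many connected components, indexed by the finite quotient $Q=G/(G^0\Gamma)$, where $G^0$ is the identity component of $G$ (which is open, hence closed, so $G^0\Gamma$ is a closed-and-open subgroup); the niltranslation $R$ acts on these components by left translation by the image $\bar a$ of $a$ in $Q$. Ergodicity of $\mathbf{Z}$ forces this action to be transitive (otherwise a proper union of components would be a nontrivial invariant set), so $\langle\bar a\rangle=Q$ and $Q$ is cyclic, say of order $N$. The component map is then a factor map from $\mathbf{Z}$ onto the cyclic rotation $(\Z/N\Z,\ t\mapsto t+1)$, whose eigenvalues are all the $N$-th roots of unity. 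If $G/\Gamma$ were disconnected we would have $N>1$ and hence a nontrivial root-of-unity eigenvalue, contradicting total ergodicity; therefore $N=1$ and $G/\Gamma$ is connected.

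For (i)$\Rightarrow$(ii) I would pass to the Kronecker factor. The key input is the classical structure theory of nilsystems (cf.\ \cite{Auslander_Green_Hahn63,Parry69}): the Kronecker factor of an ergodic nilsystem $\mathbf{Z}$ is the rotation on its horizontal torus $A=G/(\overline{[G,G]}\,\Gamma)$, so that $\Eig(\mathbf{Z})$ coincides with the spectrum of the rotation on $A$. Since $A$ is a continuous image of $G/\Gamma$, connectedness of $G/\Gamma$ forces $A$ to be a \emph{connected} compact abelian group, whence by Pontryagin duality $\hat A$ is torsion-free. Every eigenvalue of the rotation on $A$ has the form $\chi(a)$ for a character $\chi\in\hat A$, and if $\chi(a)$ were a nontrivial root of unity then $\chi$ would be a nontrivial torsion element of $\hat A$, which is impossible; hence $\Eig(\mathbf{Z})$ contains no nontrivial root of unity and $\mathbf{Z}$ is totally ergodic. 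I expect the main obstacle to be precisely this identification of the Kronecker factor with the horizontal torus together with the matching of connected components. I would either cite it from the nilsystems literature, or, if a self-contained argument is preferred, observe that ergodicity makes $Q$ cyclic and hence abelian, so $[G,G]\subseteq G^0\Gamma$ and thus $\overline{[G,G]}\,\Gamma\subseteq G^0\Gamma$; this gives $\pi_0(A)=G/(G^0\Gamma)=\pi_0(G/\Gamma)$, reconciling the two notions of connectedness.
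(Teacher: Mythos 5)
The paper does not actually prove this lemma: it is stated as a known fact with a pointer to Auslander--Green--Hahn, Parry and Leibman, and no proof environment follows. Your write-up therefore cannot be compared to an in-paper argument, but as a standalone proof it is essentially correct and follows the standard route one would extract from those references: the spectral reformulation of total ergodicity is right; the (ii)$\Rightarrow$(i) direction via the finite set of connected components, on which $R$ acts transitively and which therefore yields a factor isomorphic to the cyclic rotation on $\Z/N\Z$ with eigenvalue $e(1/N)$, is correct (one small caveat: $G^{0}\Gamma$ need not be normal, so $Q=G/(G^{0}\Gamma)$ is a priori only a coset space rather than a group; but the stabilizer $\{n:a^{n}\in G^{0}\Gamma\}=N\Z$ still identifies it with $\Z/N\Z$ carrying the $+1$ action, which is all you use); and the (i)$\Rightarrow$(ii) direction correctly reduces, via Green's theorem that every eigenfunction of an ergodic nilsystem factors through the horizontal torus $A$, to the torsion-freeness of $\hat A$ for connected compact abelian $A$. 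Two points deserve to be made explicit. First, the step ``$\chi(a)$ a nontrivial root of unity $\Rightarrow$ $\chi$ is torsion in $\hat A$'' is not immediate: $\chi(a)^{n}=1$ only gives $\chi^{n}(a)=1$, and you need the ergodicity of the rotation on $A$ (density of $\{a^{k}\}$) to upgrade this to $\chi^{n}\equiv 1$; without that, $\chi^{n}$ could a priori be a nontrivial character killing $a$ --- which is itself ruled out by ergodicity, so either way the hypothesis must be invoked. Second, the identification of $\Eig(\mathbf{Z})$ with the spectrum of the rotation on $A$ is the genuinely nontrivial input; you are citing it from the same sources the paper cites for the whole lemma, which is legitimate, but it means your proof is a reduction to that theorem rather than a from-scratch argument --- exactly the level of detail the paper itself chose to omit.
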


\begin{proof}[Proof of \cref{thm:3.4second-star}]
Suppose $G/\Gamma$ is not connected.
Since $G/\Gamma$ is compact, it splits into finitely many distinct connected components $Z_0,Z_1,\ldots,Z_{t-1}$.
It is also straightforward to show that $Z_i$ is itself a nilmanifold with Haar measure $\mu_{Z_i}$ and that $\mu_{G/\Gamma}=\frac{1}{t}(\mu_{Z_0}+\mu_{Z_1}+\ldots+\mu_{Z_{t-1}})$.
The ergodic niltranslation $R:G/\Gamma\to G/\Gamma$ cyclically permutes these connected components, so after re-indexing them if necessary we have $R^{tn+i}Z_0=Z_i$ for all $n\in\N$ and $i\in\{0,1,\ldots,t-1\}$.
In particular, for every $i\in\{0,\ldots,t-1\}$ the component $Z_i$ is $R^t$ invariant and $R^t:Z_i\to Z_i$ is an ergodic niltranslation on $Z_i$.
Let ${\mathbf Z}_i=(Z_i,R^t,\mu_{Z_i})$.

Since $Z_i$ is connected, it follows from \cref{thm:dynamics-nilrotation} that ${\mathbf Z}_i$ is totally ergodic. This means that $\Eig({\mathbf Z}_i)$ contains no roots of unity.
Also note that the function $\sum_{i=0}^{t-1} e\left(\frac{i}{t}\right)1_{Z_i}$ is an eigenfunction for $R$ with eigenvalue $e\left(\frac{i}{t}\right)$, where $e(x)=e^{2\pi i x}$ for all $x\in\R$. We conclude that
$$
\Eig({\mathbf Z})\cap\{\text{roots of unity}\}=\left\{1,e(\tfrac1t),e(\tfrac2t),\dots,e(\tfrac{t-1}t)\right\}.
$$
On the other hand, if $\zeta$ is not a root of unity then $\zeta$ is an eigenvalue for $R$ if and only if $\zeta^t$ is an eigenvalue for $R^t$; therefore
$$
\Eig({\mathbf Z})=\left(\Eig({\mathbf Z}_0)\right)^{\frac{1}{t}}\cdot \left\{1,e(\tfrac1t),e(\tfrac2t),\dots,e(\tfrac{t-1}t)\right\}.
$$

Let $x\in Z$ and let $i_0\in\{0,1,\ldots,t-1\}$ be such that $x\in Z_{i_0}$. Define \[\Omega({\mathbf Z}_{i_0},x)= \overline{\big\{S^{tn}(x,x,\ldots, x): n\in\Z\big\}}.\]
Observe that for $i\in\{0,\ldots,t-1\}$,
$$
S^i\Omega({\mathbf Z}_{i_0},x)\subset Z_{i_0+i}\times Z_{i_0+2i} \times \ldots \times Z_{i_0+ki}
$$
and hence $S^i\Omega({\mathbf Z}_{i_0},x)\cap S^j\Omega({\mathbf Z}_{i_0},x)=\emptyset$ for $i\neq j$.
Since ${\mathbf Z}_{i_0}$ is totally ergodic, it follows from \cref{thm:3.4second} that for almost every $x\in Z_{i_0}$ the nilsystem $\big(\Omega({\mathbf Z}_{i_0},x), S^t\big)$ is also totally ergodic.
In view of \cref{thm:dynamics-nilrotation} this means that $\Omega({\mathbf Z}_{i_0},x)$ is connected.
We deduce that for almost every $x\in Z$ the nilmanifold $\Omega({\mathbf Z},x)$ has $t$ connected components, because
$$
\Omega({\mathbf Z},x)=\bigcup_{i=0}^{t-1}S^i \Omega({\mathbf Z}_{i_0},x),
$$
where $\Omega({\mathbf Z}_{i_0},x), S^1\Omega({\mathbf Z}_{i_0},x),\ldots, S^{t-1}\Omega({\mathbf Z}_{i_0},x)$ are connected and distinct.

Finally, suppose $\theta\in[0,1)$ is such that $e(\theta)\notin\Eig({\mathbf Z})$.
Then $e(\theta)^t\notin\Eig({\mathbf Z}_i)$ for all $i$.
From \cref{thm:3.4second} we deduce that for almost every $x\in Z$, $e(\theta)^t\notin\Eig(\Omega({\mathbf Z}_{i_0},x),S^t,\mu_{\Omega({\mathbf Z}_{i_0},x)})$ (where, again, $i_0\in\{0,\dots,t-1\}$ is such that $x\in Z_{i_0}$) and hence $e(\theta)\notin\Eig(\Omega({\mathbf Z},x),S,\mu_{\Omega({\mathbf Z},x)})$.

\end{proof}

\begin{corollary}
\label{lem_due-to-erratum}
Let $k\in\N$ and let ${\mathbf Z}=(G/\Gamma,R,\mu_{G/\Gamma})$ be an ergodic nilsystem. Define $S= R\times R^2\times \ldots\times R^k$ and
\[
\Omega({\mathbf Z},x)=\overline{\big\{S^n(x,x,\ldots, x): n\in\Z\big\}}
\subset (G/\Gamma)^{k}. 
\]
Let $\mathbf{X}=(X,T,\mu_X)$ be a $\Z$ system.
If $\mathbf{X}$ and $\mathbf{Z}$ are Kronecker disjoint, then for almost every $x\in G/\Gamma$ the $\Z$ systems $\mathbf{X}$ and $(\Omega({\mathbf Z},x),S,\mu_{\Omega({\mathbf Z},x)})$ are Kronecker disjoint.
\end{corollary}

\begin{proof}
By way of contradiction, assume that there is a positive measure set $X'\subset  G/\Gamma$ such that $\mathbf{X}$ and $(\Omega({\mathbf Z},x),S,\mu_{\Omega({\mathbf Z},x)})$ are not Kronecker disjoint whenever $x\in X'$. 
This means that for any $x\in X'$ we can find some $\theta_x\in [0,1)$ such that $e(\theta_x)$ is a common eigenvalue for the systems $\mathbf{X}$ and $(\Omega({\mathbf Z},x),S,\mu_{\Omega({\mathbf Z},x)})$. Since $\mathbf{X}$ only possesses countably many eigenvalues, there exists a positive measure subset $X''\subset X'$ such that $\theta_x=\theta$ is constant for all $x\in X''$. 
Since $e(\theta)$ belongs to $\Eig(\Omega({\mathbf Z},x),S,\mu_{\Omega({\mathbf Z},x)})$ for all $x\in X''$ and $X''$ has positive measure, it follows from \cref{thm:3.4second-star} that $e(\theta)$ belongs to $\Eig(\mathbf{Z})$. This contradicts the assumption that $\mathbf{X}$ and $\mathbf{Z}$ are Kronecker disjoint.
\end{proof}

\subsection{Proofs of \cref{thm:product-system-orthogonality} and \cref{thm:mrww-1}}
\label{subsec:proofs-MR}

The following result, which will be used in the proofs of \cref{thm:product-system-orthogonality} and \cref{thm:mrww-1}, is contained implicitly in \cite[Subsection 7.3]{Host_Kra09}.

\begin{theorem}[cf.\ {\cite[Subsection 7.3]{Host_Kra09}}]
\label{thm:HK-7.3}
Let $k\in\N$, let $\mathbf{X}=(X,T,\mu_X)$ be a $\Z$ system and let $f_1,\ldots,f_k \in \lp^\infty(X,\mu_X)$.
Then for every $\epsilon>0$ there exists a $k$-step nilsystem $(G/\Gamma,R,\mu_{G/\Gamma})$, which is a factor of $(X,T,\mu_X)$, and there exist continuous functions $g_1,\ldots,g_k\in\cont(G/\Gamma)$ such that for every bounded complex-valued sequence $(a_n)_{n\in\N}$ one has
\[
\limsup_{N\to\infty} \bnorm{\frac{1}{N} \sum_{n=1}^N a_n\prod_{i=0}^k T^{in} f_i - \frac{1}{N} \sum_{n=1}^N a_n\prod_{i=0}^k (R^{in}g_i) \circ \pi}{2}
\le
\epsilon \sup_{n\in\N}{|a_n|}
\]
where $\pi:X\to G/\Gamma$ denotes the factor map from $(X,T,\mu_X)$ onto $(G/\Gamma,R,\mu_{G/\Gamma})$.
\end{theorem}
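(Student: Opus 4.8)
The plan is a two-stage approximation. In the first stage I reduce each $f_i$ to its conditional expectation onto a characteristic nilfactor; in the second I replace those conditional expectations by continuous functions on a single nilsystem. The only delicate point is that the first reduction must be performed uniformly over all bounded weight sequences $(a_n)$, and this uniformity is exactly the ingredient drawn from \cite[Subsection~7.3]{Host_Kra09}. We may assume $\mathbf{X}$ is ergodic, since then the structure theory produces an honest inverse limit of $k$-step nilsystems as the characteristic factor.

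First I would invoke the Host--Kra structure theory for the configuration $(T^{in})_{i=0}^{k}$: the factor $\mathsf{Z}_k(\mathbf{X})$, an inverse limit of $k$-step nilsystems and a factor of $\mathbf{X}$, is characteristic for the associated multiple averages. The precise form I need is a \emph{weighted} generalized von Neumann inequality: for each index $j$,
\[
\limsup_{N\to\infty}\bnorm{\frac1N\sum_{n=1}^N a_n\prod_{i=0}^{k} T^{in}f_i}{2}
\le
\Big(\sup_{n\in\N}|a_n|\Big)\Big(\prod_{i\ne j}\norm{f_i}{\infty}\Big)\,\|f_j\|_{k+1},
\]
where $\|\cdot\|_{k+1}$ is the relevant Host--Kra seminorm and, crucially, the weights enter only through $\sup_n|a_n|$. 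Telescoping over $j$ and writing each $f_j=\condex{f_j}{\mathsf{Z}_k(\mathbf{X})}+\big(f_j-\condex{f_j}{\mathsf{Z}_k(\mathbf{X})}\big)$, every error term carries a factor of vanishing seminorm and hence contributes nothing in the limit, the bound being uniform in $(a_n)$. Thus, up to an arbitrarily small uniform error, I may assume each $f_i$ is measurable with respect to $\mathsf{Z}_k(\mathbf{X})$.

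For the second stage I use that $\mathsf{Z}_k(\mathbf{X})$ is an inverse limit of $k$-step nilsystems, each a factor of $\mathbf{X}$. Hence I can choose a single $k$-step nilsystem factor $(G/\Gamma,R,\mu_{G/\Gamma})$ of $\mathbf{X}$, with factor map $\pi$, and continuous functions $g_i\in\cont(G/\Gamma)$ with $\norm{g_i}{\infty}\le\norm{f_i}{\infty}$ and $\norm{\condex{f_i}{\mathsf{Z}_k(\mathbf{X})}-g_i\circ\pi}{2}$ as small as desired (density of continuous functions in $\lp^2$, followed by truncation to control the sup norm). Telescoping the difference of the two products one factor at a time, and using at each term the intertwining $(R^{in}g_i)\circ\pi=T^{in}(g_i\circ\pi)$ together with the Hölder bound $\norm{uv}{2}\le\norm{u}{\infty}\norm{v}{2}$, the resulting difference of averages is at most $\sup_n|a_n|\sum_{j}\big(\prod_{i\ne j}\norm{f_i}{\infty}\big)\norm{\condex{f_j}{\mathsf{Z}_k(\mathbf{X})}-g_j\circ\pi}{2}$. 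No seminorm is needed here: uniformity in the weights is automatic because $\sup_n|a_n|$ simply factors out. Choosing the $g_i$ close enough makes this smaller than $\epsilon\sup_n|a_n|$, and combining with the first stage completes the proof.

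The main obstacle is the weighted generalized von Neumann inequality of the first stage. Establishing that the Host--Kra seminorms control these averages with the weights appearing \emph{only} through their supremum is where the real work lies; it is obtained by the iterated van der Corput / Cauchy--Schwarz scheme underlying the seminorms, each application absorbing the weights $a_n$ at the cost of passing to a higher-step seminorm. This is precisely the estimate made explicit in \cite[Subsection~7.3]{Host_Kra09}, and everything else is a soft density-and-Hölder reduction.
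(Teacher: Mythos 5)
The paper itself contains no proof of this statement: it is imported from Host--Kra, with ``contained implicitly in \cite[Subsection~7.3]{Host_Kra09}'' standing in for an argument. Measured against that source, your outline is a faithful reconstruction of its structure: first show that the Host--Kra factor $\mathsf{Z}_k$ is characteristic for these averages \emph{uniformly over all bounded weights}, then approximate each $\condex{f_i}{\mathsf{Z}_k}$ in $\lp^2$ by a continuous function pulled back from a single $k$-step nilsystem factor (legitimate, since the nilsystem factors of the inverse limit $\mathsf{Z}_k$ are nested and one may pass to a common one), and close with the telescoping/H\"older estimate, which you carry out correctly. Two points need more care. First, your account of the key weighted inequality --- iterated van der Corput ``absorbing the weights at each step'' --- does not work as described: after one application the weights become $a_{n+h}\overline{a_n}$, and absorbing them by taking absolute values inside the average over $n$ destroys exactly the cancellation the next iteration requires. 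The estimate is instead obtained by eliminating the weights at the outset: a duality/Cauchy--Schwarz step in the weight variable bounds $\sup_{|a_n|\le 1}\limsup_N\big\|\frac1N\sum_n a_n\prod_i T^{in}f_i\big\|_2^2$ by an \emph{unweighted} multiple average on the product system $X\times X$ with functions $f_i\otimes\overline{f_i}$, to which the ordinary generalized von Neumann inequality applies; the seminorm inequality for $f\otimes\bar f$ on $X\times X$ versus $f$ on $X$ then costs exactly one degree, yielding control by the $(k+1)$-st seminorm of $f_j$ and hence by $\mathsf{Z}_k$ --- which is precisely why the approximating nilsystem is $k$-step rather than $(k-1)$-step. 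Your displayed inequality is the correct endpoint; only the proposed mechanism for it is off, and since you defer that step to \cite{Host_Kra09} in any case (as does the paper), this is a flaw in the explanation rather than a fatal gap. Second, ``we may assume $\mathbf{X}$ is ergodic'' is not free: a non-ergodic system need not admit a single nilsystem factor, and the Host--Kra statement is for ergodic systems; since every application in the paper is to ergodic systems, this is as much an imprecision in the theorem's statement as in your proof, but it should at least be flagged rather than asserted.
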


We will also need the following lemma.

\begin{lemma}
\label{lem:dons-1}
Let $(Y,S)$ be a topological $\Z$ system, let $\mu_Y$ be an ergodic $S$ invariant Borel probability measure on $Y$ and let $G\in \lp^1(Y,\mu_Y)$. Then there exists a set $Y'\subset Y$ with $\mu_Y(Y')=1$ such that for any ergodic nilsystem system $(G/\Gamma,\mu_{G/\Gamma},R)$ which is Kronecker disjoint from $(Y,S,\mu_Y)$, any $F\in \cont(G/\Gamma)$, any $x\in G/\Gamma$ and any $y\in Y'$ we have
$$
\lim_{N\to\infty}\frac1N\sum_{n=1}^N F(R^n x)G(S^n y)=\int_{G/\Gamma} F\d\mu_{G/\Gamma}\int_Y G\d\mu_Y.
$$
Moreover, if $(Y,S)$ is uniquely ergodic and $g\in\cont(Y)$ then we can take $Y'=Y$.
\end{lemma}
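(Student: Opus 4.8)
The plan is to deduce this lemma directly from the Wiener--Wintner theorem \cref{thm:wienerWintner}, specialized to the acting group $\Z$ and to the case where the weighting system is an ergodic nilsystem. To set this up I would first record two standard properties of an ergodic nilsystem $(G/\Gamma,R,\mu_{G/\Gamma})$. First, it is uniquely ergodic: as recalled in \cref{sec:prelims-on-nil}, ergodicity of a nilsystem is equivalent to $R$ acting transitively on $G/\Gamma$, and transitivity forces $\mu_{G/\Gamma}$ to be the unique $R$ invariant Borel probability measure. Second, it is measurably distal: nilsystems are topologically distal, and as noted in \cref{subsec:prelim-distal} any invariant measure on a topologically distal system gives rise to a measurably distal system (alternatively one realizes the nilsystem as a finite tower of isometric extensions and appeals directly to the definition of measurable distality).

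With these facts in place, I would apply \cref{thm:wienerWintner} with the acting group $\Z$, the tempered \Folner{} sequence $\Phi_N=\{1,\dots,N\}$, and the weight $\phi$ taken to be the given function $G\in\lp^1(Y,\mu_Y)$. This yields a conull set $Y'\subset Y$, depending only on $G$ (and on $\mathbf{Y}$), with the following property: for every uniquely ergodic, measurably distal topological $\Z$ system $(X,T)$ that is Kronecker disjoint from $(Y,S,\mu_Y)$, every $F\in\cont(X)$, every $x\in X$ and every $y\in Y'$, the averages $\frac1N\sum_{n=1}^N F(T^nx)G(S^ny)$ converge to $\int F\d\mu_X\int G\d\mu_Y$. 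By the two properties above, every ergodic nilsystem $(G/\Gamma,R,\mu_{G/\Gamma})$ Kronecker disjoint from $\mathbf{Y}$ is exactly such a system, so specializing $(X,T)=(G/\Gamma,R)$ and $F\in\cont(G/\Gamma)$ gives precisely the asserted convergence. The decisive feature, already built into \cref{thm:wienerWintner}, is that the single set $Y'$ works simultaneously for all such nilsystems, all $F$ and all $x$, as the statement of the lemma demands.

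For the ``moreover'' clause I would argue separately that $Y'$ may be taken to be all of $Y$ when $(Y,S)$ is uniquely ergodic and $G$ is continuous. In that situation both $(G/\Gamma,R)$ and $(Y,S)$ are uniquely ergodic, so any $(R\times S)$ invariant measure on $G/\Gamma\times Y$ projects to the unique invariant measures on the two factors and is therefore automatically a joining; Kronecker disjointness together with \cref{1st_cor_of_main_result} then forces it to equal $\mu_{G/\Gamma}\otimes\mu_Y$. Hence $G/\Gamma\times Y$ is uniquely ergodic, and the standard convergence theorem for uniquely ergodic systems gives convergence of the averages of the continuous function $F\otimes G$ at \emph{every} point $(x,y)$, yielding $Y'=Y$. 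I do not expect a genuine obstacle here: essentially all of the analytic content is packaged in \cref{thm:wienerWintner}, and the one point that must be verified with care is simply that ergodic nilsystems meet its hypotheses of unique ergodicity and measurable distality.
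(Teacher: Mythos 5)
Your proposal is correct and follows essentially the same route as the paper, whose proof simply observes that nilsystems are measurably distal (and uniquely ergodic) and invokes \cref{thm:wienerWintner}, with the ``moreover'' clause already handled inside that theorem's proof via unique ergodicity of the product. Your additional verifications (unique ergodicity of ergodic nilsystems, measurable distality, and the uniform choice of $Y'$) are exactly the points the paper leaves implicit.
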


\begin{proof}
Since any nilsystem is measurably distal (see \cite[Theorem 2.14]{Leibman05}), \cref{lem:dons-1} is a special case of \cref{thm:pointwise-ET-with-distal-weights}.
\end{proof}

\begin{proof}[Proof of \cref{thm:product-system-orthogonality}]
Let $\mathbf{X}=(X,T,\mu_X)$ and $\mathbf{Y}=(Y,S,\mu_Y)$ be $\Z$ systems and assume $\mathbf{X}$ and $\mathbf{Y}$ are Kronecker disjoint.
Let $k,\ell\in\N$, $f_1,\dots,f_k\in \lp^\infty(X,\mu_X)$ and $g_1,\dots,g_\ell\in \lp^\infty(Y,\mu_Y)$.
According to \cite{Host_Kra05} and \cite{MR2257397} the limit
\[
F= \lim_{N \to \infty} \frac{1}{N} \sum_{n=1}^N\prod_{i=1}^k T^{in}f_i
\]
exists in $\lp^2(X,\mu_X)$ and the limit
\[
G= \lim_{N\to\infty} \frac{1}{N} \sum_{n=1}^N\prod_{j=1}^\ell S^{jn}g_j
\]
exists in $\lp^2(Y,\mu_Y)$.
Moreover, in view of \cite{Tao08} also the limit
\[
H = \lim_{N\to\infty} \frac{1}{N} \sum_{n=1}^N\prod_{i=1}^k \prod_{j=1}^\ell T^{in}f_i S^{jn}g_j
\]
exists in $\lp^2(X\times Y,\mu_X \otimes \mu_Y)$.
Our goal is to show that
\begin{equation}
\label{eq:6.1-0}
H=F\otimes G.
\end{equation}

We can assume without loss of generality that $\norm{f_i}{\infty} \le 1$ and $\norm{g_j}{\infty} \le 1$.
Fix $\epsilon > 0$.
First, we apply \cref{thm:HK-7.3} to find a $k$-step nilsystem $(G_X/\Gamma_X,R_X,\mu_{G_X/\Gamma_X})$, which is a factor of $(X,T,\mu_X)$, and a set of continuous functions $\tilde{f}_1,\ldots,\tilde{f}_k\in\cont(G_X/\Gamma_X)$ such that for every bounded complex-valued sequence $(a_n)_{n\in\N}$ one has
\[
\limsup_{N\to\infty} \bnorm{\frac{1}{N} \sum_{n=1}^N a_n \prod_{i=1}^k T^{in}f_i - \frac{1}{N} \sum_{n=1}^N a_n \prod_{i=1}^k  (R_X^{in}\tilde{f}_i)\circ \pi}{2}
\le
\epsilon \sup_{n\in\N}|a_n|
\]
where $\pi:X\to G_X/\Gamma_X$ denotes the factor map from $(X,T,\mu_X)$ onto $(G_X/\Gamma_X,R_X,\mu_{G_X/\Gamma_X})$.
In particular, if we choose $a_n=\prod_{j=1}^\ell S^{jn}g_j(y)$ as $y$ runs through $Y$, we obtain
\begin{equation}
\label{eq:6.1-1}
\sup_{y\in Y}
\limsup_{N\to\infty}
\bnorm{\frac{1}{N} \sum_{n=1}^N \prod_{i=1}^k \prod_{j=1}^\ell T^{in} f_i S^{jn}g_j(y) - \frac{1}{N} \sum_{n=1}^N \prod_{i=1}^k\prod_{j=1}^\ell \big( (R_X^{in}\tilde{f}_i)\circ\pi\big) S^{jn}g_j(y)}{2}
\le
\epsilon.
\end{equation}
From \eqref{eq:6.1-1} it follows that
\begin{equation}
\label{eq:6.1-2}
\limsup_{N\to\infty}
\bnorm{\frac{1}{N} \sum_{n=1}^N\prod_{i=1}^k \prod_{j=1}^\ell T^{in}f_i S^{jn}g_j - \frac{1}{N} \sum_{n=1}^N \prod_{i=1}^k \prod_{j=1}^\ell \big( (R_X^{in}\tilde{f}_i) \circ \pi \big) S^{jn}g_j}{2}
\le
\epsilon.
\end{equation}

Similarly, we can pick a $\ell$-step nilsystem $(G_Y/\Gamma_Y,R_Y,\mu_{G_Y/\Gamma_Y})$, which is a factor of $(Y,S,\mu_Y)$, and a set of continuous functions $\tilde{g}_1,\ldots,\tilde{g}_\ell\in\cont(G_Y/\Gamma_Y)$ such that for every bounded complex-valued sequence $(b_n)_{n\in\N}$ one has
\[
\limsup_{N\to\infty}
\bnorm{\frac{1}{N} \sum_{n=1}^N b_n \prod_{j=1}^\ell S^{jn}g_j - \frac{1}{N} \sum_{n=1}^N b_n \prod_{j=1}^\ell  (R_Y^{jn} \tilde{g}_j) \circ \eta }{2}
\le
\epsilon  \sup_{n\in\N}|b_n|,
\]
where $\eta:Y\to G_Y/\Gamma_Y$ denotes the factor map from $(Y,S,\mu_Y)$ onto $(G_Y/\Gamma_Y,R_Y,\mu_{G_Y/\Gamma_Y})$. If we set $b_n=\prod_{i=1}^k (R_X^{in}\tilde{f}_i)\circ\pi(x)$ then we get
\begin{equation}
\label{eq:6.1-3}
\sup_{x\in X}
\limsup_{N\to\infty}
\bnorm{\frac{1}{N} \sum_{n=1}^N \prod_{i=1}^k \prod_{j=1}^\ell (R_X^{in}\tilde{f}_i)(\pi x) S^{jn}g_j - \frac{1}{N} \sum_{n=1}^N \prod_{i=1}^k \prod_{j=1}^\ell (R_X^{in} \tilde{f}_i)(\pi x)\big((R_Y^{jn}\tilde{g}_j) \circ \eta\big)}{2}
\leq
\epsilon
\end{equation}
and hence
\begin{equation}
\label{eq:6.1-4}
\begin{split}
\limsup_{N\to\infty}
\bnorm{\frac{1}{N} \sum_{n=1}^N \prod_{i=1}^k \prod_{j=1}^\ell \big( (R_X^{in} \tilde{f}_i)\circ\pi \big) S^{jn}g_j
-
\frac{1}{N} \sum_{n=1}^N \prod_{i=1}^k \prod_{j=1}^\ell  \big((R_X^{in}\tilde{f}_i)\circ\pi\big)\big((R_Y^{jn}\tilde{g}_j)\circ \eta\big)}{2}
\le
\epsilon.
\end{split}
\end{equation}
Combining \eqref{eq:6.1-2} and \eqref{eq:6.1-4} yields
\begin{equation}
\label{eq:6.1-5}
\limsup_{N \to \infty}
\bnorm{\frac{1}{N} \sum_{n=1}^N \prod_{i=1}^k \prod_{j=1}^\ell T^{in} f_i S^{jn} g_j - \frac{1}{N} \sum_{n=1}^N \prod_{i=1}^k \prod_{j=1}^\ell \big((R_X^{in}\tilde{f}_i)\circ\pi \big)\big( (R_Y^{jn}\tilde{g}_j)\circ \eta\big) }{2}
\le
2\epsilon.
\end{equation}

Next, we claim that for almost every $x\in G_X/\Gamma_X$ and almost every $y\in G_Y/\Gamma_Y$ we have
\begin{equation}
\label{eq:6.1-6}
\lim_{N\to\infty}\Bigg|\frac1N\sum_{n=1}^N \prod_{i=1}^k\prod_{j=1}^\ell  \tilde{f}_i(R_X^{in}x)\tilde{g}_j(R_Y^{jn} y) -  \Bigg(\frac1N\sum_{n=1}^N \prod_{i=1}^k\tilde{f}_i(R_X^{in}x)\Bigg) \Bigg(\frac1N\sum_{n=1}^N \prod_{j=1}^\ell \tilde{g}_j(R_Y^{jn} y) \Bigg)\Bigg|=0.
\end{equation}
Assume for now that this claim holds.
It follows from \eqref{eq:6.1-1} and \eqref{eq:6.1-3} that
\begin{equation}
\label{eq:6.1-7}
\begin{split}
\limsup_{N\to\infty}
\bnbar
\Bigg( \frac{1}{N} \sum_{n=1}^N \prod_{i=1}^kR_X^{in}\tilde{f}_i\circ\pi\Bigg) & \Bigg(\frac{1}{N} \sum_{n=1}^N \prod_{j=1}^\ell R_Y^{jn}\tilde{g}_j \circ\eta\Bigg)
\\
&
-
\Bigg(\frac{1}{N} \sum_{n=1}^N \prod_{i=1}^k T^{in}f_i\Bigg) \Bigg(\frac{1}{N} \sum_{n=1}^N \prod_{j=1}^\ell S^{jn}g_j \Bigg)
\bnbar_2
\le
2\epsilon.
\end{split}
\end{equation}
Thus, combining \eqref{eq:6.1-7} with \eqref{eq:6.1-6} and \eqref{eq:6.1-5} gives
\[
\lim_{N\to\infty}
\bnorm{\frac{1}{N} \sum_{n=1}^N \prod_{i=1}^k\prod_{j=1}^\ell T^{in}f_i S^{jn}g_j - \left(\frac{1}{N} \sum_{n=1}^N\prod_{i=1}^k T^{in}f_i \right)
\cdot
\left( \frac{1}{N} \sum_{n=1}^N \prod_{j=1}^\ell S^{jn}g_j\right)}{2}
\leq
4\epsilon.
\]
Since $\epsilon>0$ was chosen arbitrarily, the proof of \eqref{eq:6.1-0} is complete.

It remains to show that \eqref{eq:6.1-6} is true.
Define $S_X= R_X\times R_X^2\times \ldots\times R_X^k$ and for every $x\in G_X/\Gamma_X$ consider $\Omega({\mathbf X},x)=\overline{\big\{S_X^n(x,x,\ldots, x): n\in\Z\big\}}$.
Also, define $F=\tilde{f}_1\otimes \ldots\otimes \tilde{f}_k$.
Similarly, we define $S_Y=R_Y\times R_Y^2\times \ldots\times R_Y^\ell$, $\Omega({\mathbf Y},y)=\overline{\big\{S_Y^n(y,y,\ldots, y): n\in\Z\big\}}$ and $G=\tilde{g}_1\otimes \ldots\otimes \tilde{g}_\ell$.
Hence \eqref{eq:6.1-6} can be rewritten as
\begin{equation}
\label{eq:6.1-8}
\begin{split}
\lim_{N\to\infty}\Bigg|\frac1N\sum_{n=1}^N F(S_X^n(x,\ldots,x)) & G(S_Y^n(y,\ldots,y))
\\
&-\Bigg(\frac1N\sum_{n=1}^N F(S_X^n(x,\ldots,x))\Bigg) \Bigg(\frac1N\sum_{n=1}^NG(S_Y^n(y,\ldots,y)) \Bigg)\Bigg|=0.
\end{split}
\end{equation}

Note that $(G_X/\Gamma_X,R_X,\mu_{G_X/\Gamma_X})$ and $(G_Y/\Gamma_Y,R_Y,\mu_{G_Y/\Gamma_Y})$ are Kronecker disjoint, because $(X,T,\mu_X)$ and $(Y,S,\mu_Y)$ are Kronecker disjoint.
In view of \cref{lem_due-to-erratum} it therefore follows that for $\mu_X$-almost every $x\in G_X/\Gamma_X $ and for $\mu_Y$-almost every $y\in G_Y/\Gamma_Y$ the two nilsystems $(\Omega({\mathbf X},x), S_X, \mu_{\Omega({\mathbf X},x)})$ and $(\Omega({\mathbf Y},y), S_Y, \mu_{\Omega({\mathbf Y},y)})$ are Kronecker disjoint.
We can now apply \cref{lem:dons-1} to conclude that for almost every $x\in G_X/\Gamma_X $ and almost every $y\in G_Y/\Gamma_Y$ we have
\begin{equation}
\label{eq:6.1-9}
\lim_{N\to\infty}\frac1N\sum_{n=1}^N F(S_X^n(x,\ldots,x))G(S_Y^n(y,\ldots,y))
=
\int_{\Omega({\mathbf X},x)} F\d\mu_{\Omega({\mathbf X},x)} \int_{\Omega({\mathbf Y},y)} G\d\mu_{\Omega({\mathbf Y},y)}.
\end{equation}
Since $(\Omega({\mathbf X},x), S_X)$ and $(\Omega({\mathbf Y},y), S_Y)$ are uniquely ergodic, we have that
\begin{eqnarray}
\label{eq:6.1-10}
\lim_{N\to\infty}\frac1N\sum_{n=1}^N F(S_X^n(x,\ldots,x))&=&\int_{\Omega({\mathbf X},x)} F\d\mu_{\Omega({\mathbf X},x)},
\\
\label{eq:6.1-11}
\lim_{N\to\infty}\frac1N\sum_{n=1}^N G(S_Y^n(x,\ldots,x))&=&\int_{\Omega({\mathbf Y},y)} G\d\mu_{\Omega({\mathbf Y},y)}.
\end{eqnarray}
Combining \eqref{eq:6.1-9} with \eqref{eq:6.1-10} and \eqref{eq:6.1-11} yields \eqref{eq:6.1-8}, which in turn implies \eqref{eq:6.1-6}. This completes the proof of \cref{thm:product-system-orthogonality}.
\end{proof}

\begin{proof}[Proof of \cref{thm:mrww-1}]
Let $(Y,S)$ be a topological $\Z$ system, let $\mu_Y$ be an $S$ invariant Borel probability measure on $Y$ and let $G\in \lp^1(Y,\mu_Y)$.
First we apply \cref{lem:dons-1} to find a set $Y'\subset Y$ with $\mu_Y(Y')=1$ such that for any ergodic nilsystem system $(G/\Gamma,\mu_{G/\Gamma},R)$ which is Kronecker disjoint from $(Y,S,\mu_Y)$, any $F\in \cont(G/\Gamma)$, any $x\in G/\Gamma$ and any $y\in Y'$ we have
$$
\lim_{N\to\infty}\frac1N\sum_{n=1}^N F(x)G(y)=\int_{G/\Gamma} F\d\mu_{G/\Gamma}\int_Y G\d\mu_Y.
$$
\cref{lem:dons-1} also guarantees that if $(Y,S)$ is uniquely ergodic and $G\in\cont(Y)$ then we can take $Y'=Y$.
Now let $(X,T,\mu_X)$ be a $\Z$ system that is Kronecker disjoint from $(Y,S,\mu_Y)$.
Fix $k\in\N$ and let $f_1,\dots,f_k\in \lp^\infty(X,\mu_X)$.
Our goal is to show that for any $y\in Y'$ we have
\begin{equation}
\label{eq:6.2-1}
\lim_{N\to\infty}\frac1N\sum_{n=1}^N G(S^ny)\prod_{i=1}^k T^{in}f_i=\left(\int_Y G\d\mu_Y\right)\cdot\left(\lim_{N\to\infty}\frac1N\sum_{n=1}^N\prod_{i=1}^k T^{in}f_i\right)
\end{equation}
in $\lp^2(X,\mu_X)$.

Note that \eqref{eq:6.2-1} is trivially true if $G$ is a constant function.
Hence, by replacing $G$ with $G-\int_Y G\d\mu_Y$ if necessary, we can assume without loss of generality that $\int_Y G\d\mu_Y=0$.
In this case \eqref{eq:6.2-1} reduces to
\begin{equation}
\label{eq:6.2-2}
\lim_{N\to\infty}\frac1N\sum_{n=1}^N G(S^ny)\prod_{i=1}^k T^{in}f_i=0.
\end{equation}
We can also assume without loss of generality that $\norm{G}{\infty} \le 1$ and that $\norm{f_i}{\infty} \le 1$ for all $i=1,\ldots,k$.

Fix $\epsilon > 0$.
We apply \cref{thm:HK-7.3} to find a $k$-step nilsystem $(G/\Gamma,R,\mu_{G/\Gamma})$, which is a factor of $(X,T,\mu_X)$, and a set of continuous functions $\tilde{f}_1,\ldots,\tilde{f}_k\in\cont(G/\Gamma)$ such that
\[
\limsup_{N \to \infty}
\bnorm{\frac{1}{N} \sum_{n=1}^N G(S^ny) \prod_{i=1}^k T^{in}f_i - \frac{1}{N} \sum_{n=1}^N G(S^ny) \prod_{i=1}^k  (R^{in}\tilde{f}_i)\circ \pi }{2}
\le
\epsilon,
\]
where $\pi:X\to G/\Gamma$ denotes the factor map from $(X,T,\mu_X)$ onto $(G/\Gamma,R,\mu_{G/\Gamma})$.
Therefore, to show \eqref{eq:6.2-2} it suffices to show that for almost every $x\in G/\Gamma$ one has
\begin{equation}
\label{eq:6.2-3}
\lim_{N\to\infty}\frac1N\sum_{n=1}^N G(S^ny) \prod_{i=1}^k  R^{in}\tilde{f}_i(x)=0.
\end{equation}
Define $S= R\times R^2\times \ldots\times R^k$, $Y_{x}=\overline{\big\{S^n(x,x,\ldots, x): n\in\Z\big\}}$ and also $F=\tilde{f}_1\otimes \ldots\otimes \tilde{f}_k$.
Clearly, \eqref{eq:6.2-3} is equivalent to
\begin{equation}
\label{eq:6.2-4}
\lim_{N\to\infty}\frac1N\sum_{n=1}^N G(S^ny) S^n F(x,\ldots, x)=0.
\end{equation}
It follows from \cref{lem_due-to-erratum} that for almost every $x\in G_X/\Gamma_X $ the two systems $(Y_{x}, S, \mu_{Y_{x}}, S)$ and $(Y,S,\mu_Y)$ are Kronecker disjoint.
Hence \cref{lem:dons-1} implies that for almost every $x\in G_X/\Gamma_X $ \eqref{eq:6.2-4} holds. This finishes the proof.
\end{proof}

\subsection{An example of multiple recurrence}

In this section we obtain an application of \cref{thm:main-result} to multiple recurrence.
\begin{definition}
\label{def:strongly-q-multiplicative}
Let $q$ be an integer $\geq 2$. A function $w:\N\to\C$ is called \define{strongly $q$-multiplicative} if
$w(n)=w(a_0)\cdots w(a_k)$, where
$$n=\sum_{i=0}^ka_iq^i\qquad0\leq a_i\leq q-1$$
is the base $q$ expansion of $n$.
For convenience we assume $w(0)=1$.
\end{definition}
The $\{-1,1\}$-valued Thue-Morse sequence is an example of a strongly $2$-multiplicative sequence, obtained by letting $w(0)=1$ and $w(1)=-1$.

When a strongly $q$-multiplicative function takes only finitely many values, say $w:\N\to{\mathcal A}\subset\C$, we can identify the function with a point (which by a slight abuse of notation we also denote by $w$) in the symbolic space ${\mathcal A}^\N$.
Let $S:{\mathcal A}^\N\to{\mathcal A}^\N$ be the usual shift map.
\begin{proposition}[\cite{MR883484}]
Let $w:\N\to{\mathcal A}\subset\C$ be a strongly $q$-multiplicative function. Then the orbit closure ${\mathbf Y}=\Big(\overline{\{S^nw:n\in\N\}},S\Big)$ is a uniquely ergodic system.
Moreover, the discrete spectrum $\Eig({\mathbf Y})$ with respect to the unique invariant measure $\mu_Y$ is contained in the set $\{e(a/q^n):a,n\in\N\}$.
\end{proposition}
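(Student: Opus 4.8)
The plan is to exploit the self-similar, substitutive structure that strong $q$-multiplicativity forces on $w$, and then feed it into the spectral theory of constant-length substitutions.

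First I would record the basic functional equation. Writing $n=\sum_i a_i q^i$ in base $q$, one checks directly from the definition that $w(qn+r)=w(r)w(n)$ for every $n\in\N$ and every digit $0\le r\le q-1$, and more generally $w(q^k n+m)=w(m)w(n)$ whenever $0\le m<q^k$ (using $w(0)=1$). Consequently, if $W_k=(w(0),w(1),\dots,w(q^k-1))$ denotes the length-$q^k$ prefix of $w$, then
\[
W_{k+1}=\bigl(w(0)\,W_k,\ w(1)\,W_k,\ \dots,\ w(q-1)\,W_k\bigr),
\]
where $w(r)W_k$ means the word $W_k$ with every letter multiplied by $w(r)$. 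Since $w$ takes finitely many values and every value is a finite product of $w(0),\dots,w(q-1)$, the range $\mathcal{A}$ is a finite multiplicative subgroup of $\C^\times$, hence a finite group of roots of unity. Defining the constant-length-$q$ substitution $\sigma:\mathcal{A}\to\mathcal{A}^q$ by $\sigma(a)=(a\,w(0),a\,w(1),\dots,a\,w(q-1))$, the functional equation gives $\sigma(W_k)=W_{k+1}$, so $w$ is the fixed point $\sigma^{\infty}(w(0))$ and $\mathbf{Y}=(Y,S)$ is precisely the subshift generated by $\sigma$.

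Next I would prove unique ergodicity by showing $\sigma$ is primitive. For each $a\in\mathcal{A}$ the letters occurring in $\sigma^n(a)$ are exactly $a$ times the products of $n$ of the generators $w(0),\dots,w(q-1)$; since $w(0)=1$ one may pad any element of $\mathcal{A}=\langle w(1),\dots,w(q-1)\rangle$ by identities, so for all large $n$ these products exhaust $\mathcal{A}$ and every letter occurs in $\sigma^n(a)$. Thus $\sigma$ is primitive (and aperiodic, unless $\mathcal{A}$ is trivial, a degenerate case handled separately), and the classical theory of primitive substitutions yields that $\mathbf{Y}$ is minimal and uniquely ergodic, establishing the first claim.

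Finally, for the eigenvalue bound I would invoke Dekking's structure theory for constant-length substitutions: since $\sigma$ has length $q$, the maximal equicontinuous (Kronecker) factor of $\mathbf{Y}$ is a factor of $(\mathbb{Z}_q,+1)\times(\Z/h\Z,+1)$, where $\mathbb{Z}_q$ is the $q$-adic odometer and $h$, the height of $\sigma$, is coprime to $q$; correspondingly every eigenvalue has order dividing $q^n h$. The content is then to show $h=1$: because $\sigma(a)$ begins with $a$ for every letter (equivalently $w(0)=1$), the initial symbol recurs along positions whose gcd has trivial prime-to-$q$ part, forcing the height to collapse. With $h=1$ one obtains $\Eig(\mathbf{Y})\subseteq\{e(a/q^n):a,n\in\N\}$, since each eigenvalue $\zeta$ then satisfies $\zeta^{q^k}\to 1$, and for $\zeta=e(\theta)$ this forces $q^k\theta\to 0\pmod 1$, i.e.\ $\theta\in\Z[1/q]$. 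The main obstacle is exactly this reverse inclusion: producing the factor map $Y\to\mathbb{Z}_q$ (the $q$-adic address of a point, well defined by the hierarchical block structure) only shows that every $e(a/q^n)$ is an eigenvalue, whereas the real work is to rule out further eigenvalues. This rests on the constant-length spectral theory, on verifying that the group $\mathcal{A}$ contributes only to the continuous part of the spectrum (as for Thue--Morse) rather than new eigenvalues, on confirming $h=1$ from $w(0)=1$, and on handling measurable rather than merely continuous eigenfunctions.
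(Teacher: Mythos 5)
The paper does not actually prove this proposition; it is imported from \cite{MR883484} as a black box, so there is no internal argument to compare yours against. Judged on its own terms, the first half of your proposal is sound: the identity $w(qn+r)=w(r)w(n)$ does realize $w$ as the fixed point of the length-$q$ substitution $\sigma(a)=(aw(0),\dots,aw(q-1))$ over the finite group of roots of unity generated by $w(1),\dots,w(q-1)$, and your padding argument using $w(0)=1$ correctly yields primitivity and hence minimality and unique ergodicity of the orbit closure.

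The second half has a genuine gap, and it sits exactly where you flagged ``the main obstacle'': the claim that the height $h$ equals $1$. The fact that $\sigma(a)$ begins with $a$ does not control the gcd of the return times of the initial symbol, and in fact $h>1$ can occur, so this step cannot be repaired in the stated generality. Concretely, take $q=3$, $w(0)=1$, $w(1)=-i$, $w(2)=1$, so that $w(n)=(-i)^{N_1(n)}$ where $N_1(n)$ is the number of base-$3$ digits of $n$ equal to $1$. Since $N_1(n)\equiv n \pmod 2$, the continuous function $f(y)=y_0^2$ on the orbit closure satisfies $f(S^nw)=w(n)^2=(-1)^n$ along the dense orbit of $w$, hence $f\circ S=-f$ everywhere; as $|f|\equiv 1$, we get $-1\in\Eig(\mathbf{Y})$, yet $-1\notin\{e(a/3^n):a,n\in\N\}$. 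One checks this $w$ is not eventually periodic, so this is not a dismissible degenerate case; relatedly, your parenthetical ``aperiodic unless $\mathcal{A}$ is trivial'' is also false, since $q=3$, $w(1)=-1$, $w(2)=1$ gives $w(n)=(-1)^n$ with $\mathcal{A}=\{\pm 1\}$ nontrivial. So the eigenvalue containment genuinely requires a nondegeneracy hypothesis excluding periodic factors of period coprime to $q$ (equivalently, forcing $h=1$), which must be implicit in the cited source; without it, no argument along your lines --- or any other --- can deliver the literal statement.
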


Putting this proposition together with \cref{thm:mrww-1} we obtain the following corollary.
\begin{corollary}\label{cor_qmultiplicative}
  Let $w:\N\to\C$ be a strongly $q$-multiplicative function taking only finite many values and let ${\mathbf X}=(X,T,\mu_X)$ be a system with $\Eig({\mathbf X})\cap\{e(a/q^n):a,n\in\N\}=\{1\}$.
  For every $f_1,\dots,f_k\in\lp^\infty(X)$ we have
  $$\lim_{N\to\infty}\frac1N\sum_{n=1}^Nw(n)\prod_{i=1}^kT^{in}f_i= \left(\lim_{N\to\infty}\frac1N\sum_{n=1}^Nw(n)\right)\cdot\left( \lim_{N\to\infty}\frac1N\sum_{n=1}^N\prod_{i=1}^kT^{in}f_i\right).$$
\end{corollary}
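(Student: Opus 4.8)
The plan is to realize the weight $w$ as a continuous observable along the orbit of a uniquely ergodic symbolic system and then feed this into \cref{thm:mrww-1}. First I would set $\mathbf{Y}=\big(\overline{\{S^nw:n\in\N\}},S,\mu_Y\big)$, the orbit-closure system attached to $w$; by \cite{MR883484} it is uniquely ergodic and its discrete spectrum obeys $\Eig(\mathbf{Y})\subseteq\{e(a/q^n):a,n\in\N\}$. Let $G\in\cont(Y)$ be evaluation of the zeroth coordinate, so that $G(S^nw)=w(n)$ for all $n$ (continuity being clear since cylinder functions on a symbolic space are continuous). Throughout I assume $\mathbf{X}$ is ergodic, as required to apply \cref{thm:mrww-1}; the general case is obtained by integrating the conclusion over the ergodic decomposition of $\mu_X$, provided one checks that the spectral hypothesis passes to almost every component.

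The heart of the argument is to verify that $\mathbf{X}$ and $\mathbf{Y}$ are Kronecker disjoint. Since both are ergodic $\Z$ systems, their Kronecker factors $\kron\mathbf{X}$ and $\kron\mathbf{Y}$ are ergodic rotations on compact abelian groups whose eigenvalue sets are exactly $\Eig(\mathbf{X})$ and $\Eig(\mathbf{Y})$. The hypothesis $\Eig(\mathbf{X})\cap\{e(a/q^n):a,n\in\N\}=\{1\}$ together with $\Eig(\mathbf{Y})\subseteq\{e(a/q^n):a,n\in\N\}$ forces $\Eig(\mathbf{X})\cap\Eig(\mathbf{Y})=\{1\}$, i.e.\ the two rotations share no nontrivial common eigenvalue. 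For compact abelian rotations this is precisely the condition that $\kron\mathbf{X}\times\kron\mathbf{Y}$ be ergodic; alternatively it follows from \cref{prop:invariantVectors}, since a shared nonconstant finite-dimensional invariant subspace would produce a common nontrivial character. Finally, for ergodic group rotations ergodicity of the product yields disjointness: by the description of ergodic joinings of abelian rotations recalled before the proof of \cref{thm_samequasidisjoint}, every ergodic joining is Haar measure on a coset of the orbit-closure subgroup $H$, and ergodicity of the product means $H$ is the whole group, so $\mu_{\kron X}\otimes\mu_{\kron Y}$ is the unique joining. (One may instead simply invoke \cref{1st_cor_of_main_result}.) Hence $\mathbf{X}$ and $\mathbf{Y}$ are Kronecker disjoint.

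With Kronecker disjointness established, I would apply \cref{thm:mrww-1} to the system $(Y,S,\mu_Y)$, the weight $G$, and the ergodic system $\mathbf{X}$. Because $(Y,S)$ is uniquely ergodic and $G\in\cont(Y)$, the conclusion holds with $Y'=Y$, so in particular at the point $y=w$. Reading off the limit and using $G(S^nw)=w(n)$ gives
\[
\lim_{N\to\infty}\frac1N\sum_{n=1}^N w(n)\prod_{i=1}^k T^{in}f_i=\Big(\int_Y G\d\mu_Y\Big)\Big(\lim_{N\to\infty}\frac1N\sum_{n=1}^N\prod_{i=1}^k T^{in}f_i\Big)
\]
in $\lp^2(X,\mu_X)$. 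It remains to identify the scalar: unique ergodicity of $\mathbf{Y}$ and continuity of $G$ give $\frac1N\sum_{n=1}^N w(n)=\frac1N\sum_{n=1}^N G(S^nw)\to\int_Y G\d\mu_Y$, which simultaneously shows that the scalar average on the right-hand side of the corollary converges and identifies its limit with $\int_Y G\d\mu_Y$. Substituting this into the displayed identity finishes the proof.

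The main obstacle is the Kronecker disjointness step: the entire argument hinges on converting the two purely spectral hypotheses into disjointness of the Kronecker factors, which requires the structure theory realizing Kronecker factors of ergodic $\Z$ systems as abelian rotations together with the classical equivalence between ergodicity of a product of such rotations and triviality of the intersection of their spectra. The remaining steps — constructing $\mathbf{Y}$ and $G$, invoking \cref{thm:mrww-1}, and evaluating $\int_Y G\d\mu_Y$ via unique ergodicity — are routine bookkeeping.
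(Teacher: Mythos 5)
Your proposal is correct and follows essentially the same route as the paper, which simply combines the cited proposition on strongly $q$-multiplicative functions with \cref{thm:mrww-1}; you have just spelled out the details (construction of the symbolic system and the continuous weight $G$, deduction of Kronecker disjointness from the disjointness of the spectra, and identification of the scalar via unique ergodicity) that the paper leaves implicit. Your added care about the ergodicity hypothesis on $\mathbf{X}$ is a reasonable point the paper glosses over.
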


We can use this to derive a multiple recurrence result for level sets of strongly $q$-multiplicative functions.
\begin{theorem}
  Let $m,q\in\N$, let ${\mathcal A}\subset\C$ be the set of $m$-th roots of $1$ and let $w:\N\to{\mathcal A}$ be a strongly $q$-multiplicative function.
  For every $z\in{\mathcal A}$, the level set $R=\{n\in\N:w(n)=z\}$ either has $0$ density or satisfies the following multiple recurrence property:
Let ${\mathbf X}=(X,T,\mu_X)$ be a $\Z$ system and assume that $\Eig({\mathbf X})$ contains no non-trivial $q$-th root of $1$.
  Then for every $A\subset X$ with $\mu(A)>0$ and every $k\in\N$ there exists $n\in R$ such that
  $$\mu(A\cap T^{-n}A\cap T^{-2n}A\cap\cdots\cap T^{-kn}A)>0.$$
\end{theorem}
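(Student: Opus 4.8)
The plan is to reduce the recurrence statement to an averaged form that can be evaluated using \cref{cor_qmultiplicative} together with Furstenberg's multiple recurrence theorem. Write $f=\mathbf{1}_A$, so that $\mu_X(A\cap T^{-n}A\cap\cdots\cap T^{-kn}A)=\int_X\prod_{i=0}^k T^{in}f\intd\mu_X$. First I would expand the indicator of the level set by Fourier analysis on the cyclic group $\mathcal{A}$ of $m$-th roots of unity: since $w(n),z\in\mathcal{A}$ and $\frac1m\sum_{j=0}^{m-1}\zeta^j=\mathbf{1}[\zeta=1]$ for $\zeta\in\mathcal{A}$, we obtain
\[
\mathbf{1}_R(n)=\mathbf{1}[w(n)=z]=\frac1m\sum_{j=0}^{m-1} z^{-j}\,w(n)^j.
\]
Each power $w^j\colon n\mapsto w(n)^j$ is again a finitely-valued strongly $q$-multiplicative function, because $w(qn+r)^j=w(r)^j w(n)^j$ and $w(0)^j=1$. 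Hence, by the preceding proposition, the numbers $c_j=\lim_{N\to\infty}\frac1N\sum_{n=1}^N w(n)^j$ exist, and the density of $R$ equals $d(R)=\frac1m\sum_{j=0}^{m-1}z^{-j}c_j$.

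Next I would study the weighted average and factor it through the corollary. For each fixed $j$, the orbit-closure system $\mathbf{Y}_j$ generated by $w^j$ is uniquely ergodic with $\Eig(\mathbf{Y}_j)\subseteq\{e(a/q^n):a,n\in\N\}$, so the spectral hypothesis on $\mathbf{X}$ makes $\mathbf{X}$ Kronecker disjoint from $\mathbf{Y}_j$. \cref{cor_qmultiplicative} then yields, in $\lp^2(X,\mu_X)$,
\[
\lim_{N\to\infty}\frac1N\sum_{n=1}^N w(n)^j\prod_{i=1}^k T^{in}f=c_j\cdot F,\qquad F:=\lim_{N\to\infty}\frac1N\sum_{n=1}^N\prod_{i=1}^k T^{in}f.
\]
Pairing with $f$ in $\lp^2(X,\mu_X)$ and writing $P:=\int_X f\,F\intd\mu_X=\lim_{N\to\infty}\frac1N\sum_{n=1}^N\int_X\prod_{i=0}^k T^{in}f\intd\mu_X$, the $L^2$ convergence gives $\frac1N\sum_{n=1}^N w(n)^j\int_X\prod_{i=0}^k T^{in}f\intd\mu_X\to c_j P$. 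Here $F\ge 0$, and $P>0$ by Furstenberg's multiple recurrence theorem since $\mu_X(A)>0$.

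Combining the two steps, the Fourier expansion gives
\[
\lim_{N\to\infty}\frac1N\sum_{n=1}^N\mathbf{1}_R(n)\,\mu_X\big(A\cap T^{-n}A\cap\cdots\cap T^{-kn}A\big)=\frac1m\sum_{j=0}^{m-1}z^{-j}c_j P=d(R)\,P.
\]
Thus the dichotomy falls out automatically: either $R$ has zero density, or $d(R)>0$, in which case the displayed limit is $d(R)\,P>0$. Since the summands $\mu_X(A\cap T^{-n}A\cap\cdots\cap T^{-kn}A)$ are nonnegative and bounded by $1$, a positive Cesàro limit forces a positive-density set of $n\in R$ with $\mu_X(A\cap T^{-n}A\cap\cdots\cap T^{-kn}A)>0$; in particular at least one such $n\in R$ exists, which is the desired multiple recurrence.

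The Fourier expansion and the identification of the limiting constant with $d(R)$ are routine, and the positivity $P>0$ is a cited black box. The step requiring the most care is verifying that the spectral assumption on $\mathbf{X}$ genuinely supplies Kronecker disjointness from every $\mathbf{Y}_j$: the corollary needs $\Eig(\mathbf{X})$ to avoid all nontrivial elements of the $q$-power-root group $\{e(a/q^n):a,n\in\N\}$, whereas the hypothesis is phrased in terms of $q$-th roots. Reconciling these is the lynchpin connecting the arithmetic of $w$ to the dynamics of $\mathbf{X}$; I expect it to rest on the self-similarity $w(qn+r)=w(r)w(n)$, which lets one pass to the system $(X,T^q)$ and propagate the spectral condition down the tower of $q$-power roots. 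This is the one place where an honest proof must do more than quote \cref{cor_qmultiplicative} verbatim.
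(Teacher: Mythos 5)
Your argument is essentially the paper's own proof: expand $\mathbf{1}_R$ as a linear combination of the strongly $q$-multiplicative functions $w^j$, apply \cref{cor_qmultiplicative} to each with $f_i=\mathbf{1}_A$, and get positivity of the resulting Ces\`aro average from Furstenberg's multiple recurrence theorem whenever $R$ has positive density. The one point you flag at the end --- reconciling ``no nontrivial $q$-th root of unity'' with the corollary's hypothesis $\Eig(\mathbf{X})\cap\{e(a/q^n):a,n\in\N\}=\{1\}$ --- needs no passage to $(X,T^q)$: $\Eig(\mathbf{X})$ is a multiplicative group, and every nontrivial element of $\{e(a/q^n):a,n\in\N\}$ has a power which is a nontrivial $q$-th root of unity, so the two conditions are equivalent.
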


\begin{proof}
The indicator function $1_R(n)$ of $R=\{n\in\N:w(n)=z\}$ can be expressed as $\delta_z\circ w(n)$, where $\delta_z:{\mathcal A}\to\C$ is the function $\delta_z(u)=1$ if $u=z$ and $\delta_z(u)=0$ otherwise.
The space of functions from $\mathcal A$ to $\C$ is a vector space spanned by the functions $u\mapsto u^j$, $j=0,\dots,m-1$ and in particular $\delta_z$ is a linear combination of the functions $u^j$.
It follows that $1_R(n)$ is a linear combination of the functions $w^j(n)$ for $j=0,\dots,m-1$.

Observe that each power $w^j$ of $w$ is a strongly $q$-multiplicative function, and thus \cref{cor_qmultiplicative} applied to the functions $f_i=1_A$ implies that
$$\lim_{N\to\infty}\frac1N\sum_{n=1}^Nw^j(n)\mu\left(\bigcap_{i=0}^kT^{-in}A\right) =\left(\lim_{N\to\infty}\frac1N\sum_{n=1}^Nw^j(n)\right)\cdot\left( \lim_{N\to\infty}\frac1N\sum_{n=1}^N\mu\left(\bigcap_{i=0}^kT^{-in}A\right) \right).$$
By linearity, this implies that

$$\lim_{N\to\infty}\frac1N\sum_{n=1}^N1_R(n)\mu\left(\bigcap_{i=0}^kT^{-in}A\right) =\left(\lim_{N\to\infty}\frac1N\sum_{n=1}^N1_R(n)\right)\cdot\left( \lim_{N\to\infty}\frac1N\sum_{n=1}^N\mu\left(\bigcap_{i=0}^kT^{-in}A\right) \right).$$
If $R$ has positive density, then the first factor in the right hand side of the previous equation is positive. The fact that the second factor is also positive is the content of Furstenberg's multiple recurrence theorem \cite[Theorem 11.13]{Furstenberg77}.
Therefore the left hand side has to be positive as well and this implies the desired conclusion.
\end{proof}

\section{Some open questions}
\label{sec:open-Q}

It is tempting to define $\mathbf{X}$ and $\mathbf{Z}$ to be quasi-disjoint if the natural map from $\ergjoinings(\mathbf{X},\mathbf{Z})$ to $\ergjoinings(\kron \mathbf{X},\kron \mathbf{Z})$ is a bijection.
However, \cite[Example~2]{Berg71} shows this is in fact a strictly stronger notion than quasi-disjointness, which amounts to requiring that the above map be a bijection almost everywhere with respect to the measure on $\ergjoinings(\mathbf{X},\mathbf{Z})$ given by the ergodic decomposition of the product measure.
In light of this we ask the following question.

\begin{question}
Is it true that a system $\mathbf{X}$ is quasi-disjoint from $\mathbf{Y}$ if and only if the support of the measure appearing in the ergodic decomposition of $\mu\times\nu$ equals $\ergjoinings(\mathbf{X},\mathbf{Y})$.
\end{question}

The notion of disjointness can be described in terms of factor maps.
We ask if a similar characterization of quasi-disjointness is possible:

\begin{question}
Is is true that ${\mathbf X}$ and ${\mathbf Y}$ are quasi-disjoint if and only if any system ${\mathbf Z}$ which has ${\mathbf X}$, ${\mathbf Y}$ and $\kron{\mathbf X}\times\kron{\mathbf Y}$ as factors, also has ${\mathbf X}\times{\mathbf Y}$ as a factor?
\end{question}

We are also interested in the following potential extensions of our theorem and its applications.

\begin{question}
Is a system ${\mathbf X}$ quasi-disjoint from any ergodic system if and only if it is quasi-disjoint from itself?
\end{question}


We expect the following question, which seeks a generalization of Theorem~\ref{thm:product-system-orthogonality} to countable, amenable groups, to be quite difficult.

\begin{question}
Fix a countable, amenable group $G$ with a \Folner{} sequence $\Phi$.
Let $\mathbf{X} = (X,T,\mu_X)$ be an ergodic $G^k$ system and let $\mathbf{Y} = (Y,S,\mu_Y)$ be an ergodic $G^\ell$ system.
Given $1 \le i \le j \le k$ write $T_{[i,j]}$ for the $G$ action induced by the inclusion of $G$ in $G^k$ diagonally on the coordinates $i,\dots,j$ and similarly for $S_{[i,j]}$ with $1 \le i \le j \le \ell$.
Under what conditions on $\mathbf{X}$ and $\mathbf{Y}$ do we have
\[
\lim_{N \to \infty} \frac{1}{|\Phi_N|} \sum_{g \in \Phi_N} \prod_{i=1}^k T_{[1,i]}^g f_i \prod_{j=1}^\ell S_{[1,j]}^g h_j
=
\left( \lim_{N \to \infty} \frac{1}{|\Phi_N|} \sum_{g \in \Phi_N} \prod_{i=1}^k T_{[1,i]}^g f_i\right)\!\!\!
\left( \lim_{N \to \infty} \frac{1}{|\Phi_N|} \sum_{g \in \Phi_N} \prod_{j=1}^\ell S_{[1,j]}^g h_j \right)
\]
in $\lp^2(X \times Y, \mu_X \otimes \mu_Y)$ for all $f_1,\dots,f_k$ in $\lp^\infty(X,\mu_X)$ and all $h_1,\dots,h_\ell$ in $\lp^\infty(Y,\mu_Y)$?
\end{question}

\printbibliography

\bigskip
\footnotesize

\noindent
Joel Moreira\\
\textsc{University of Warwick}\par\nopagebreak
\noindent
\href{mailto:joel.moreira@warwick.ac.uk}
{\texttt{joel.moreira@warwick.ac.uk}}
\\

\noindent
Florian K.\ Richter\\
\textsc{École Polytechnique Fédérale de Lausanne (EPFL)}\\
\noindent
\href{mailto:f.richter@epfl.ch}
{\texttt{f.richter@epfl.ch}}
\\

\noindent
Donald Robertson\\
\textsc{University of Manchester}\par\nopagebreak
\noindent
\href{mailto:donald.robertson@manchester.ac.uk}{\texttt{donald.robertson@manchester.ac.uk}}

\end{document}